\newtheorem{theorem}{Theorem}
\newtheorem{cor}{Corollary}
\newtheorem{lemma}{Lemma}
\newtheorem{proposition}{Proposition}
\newtheorem{definition}{Definition}
\newtheorem{remark}{Remark}
\newcommand{\gl}{\mathfrak{gl}}
\newcommand{\Z}{\mathbb{Z}}
\newcommand{\gr}{\operatorname{gr}}
\newcommand{\im}{\operatorname{Im}}
\newcommand{\R}{\mathbb R}
\newcommand{\E}{\mathcal{E}}
\newcommand{\JT}[1]{J^{#1}(\R,\R)}
\newcommand{\Fol}{\operatorname{Fol}}
\begin{document}\title{
Prolongation of quasi-principal frame bundles and geometry of flag structures on manifolds}
\author
{Boris Doubrov
\address{Belarussian State University, Nezavisimosti Ave.~4, Minsk 220030, Belarus;
 E-mail: doubrov@islc.org}
\and Igor Zelenko
\address{Department of Mathematics, Texas A$\&$M University,
   College Station, TX 77843-3368, USA; E-mail: zelenko@math.tamu.edu}}
\subjclass[2000]{58A30, 58A17}

\begin{abstract}
Motivated by the geometric theory of differential equations and the variational approach to the equivalence problem for geometric structures on manifolds, we consider the problem of equivalence for distributions with fixed submanifolds of flags on each fiber. We call them flag structures. The construction of the canonical frames for these structures can be given in the two prolongation steps:
the first step, based on our previous works \cite{flag2, flag1}, gives the canonical bundle of moving frames for the fixed submanifolds of flags on each fiber and the second step consists of the prolongation of the bundle obtained in the first step.
The bundle obtained in the first step is not as a rule a principal bundle so that the classical Tanaka prolongation procedure  for filtered structures can not be applied to it.  However, under natural assumptions on submanifolds of flags and on the ambient distribution, this bundle satisfies a nice weaker property. The main goal of the present paper is to formalize this property, introducing the so-called \emph{quasi-principle frame bundles}, and to generalize the Tanaka prolongation procedure to these bundles. Applications to the equivalence problems for systems of differential equations of mixed order,  bracket generating distributions, sub-Riemannian and more general structures on distributions are given.

\end{abstract}

\maketitle\markboth{Boris Doubrov and Igor Zelenko}{Prolongation of quasi-principal frame bundles}

\newcommand{\norm}[1]{\left\Vert#1\right\Vert}
\newcommand{\abs}[1]{\left\vert#1\right\vert}
\newcommand{\set}[1]{\left\{#1\right\}}
\newcommand{\Real}{\mathbb R}
\newcommand{\eps}{\varepsilon}
\newcommand{\To}{\longrightarrow}
\newcommand{\BX}{\mathbf{B}(X)}
\newcommand{\A}{\mathcal{A}}
\newcommand{\mg}{\mathfrak g}
\newcommand{\vf}{\varphi}
%





\section{Introduction}
\setcounter{equation}{0}
\setcounter{theorem}{0}
\setcounter{lemma}{0}
\setcounter{proposition}{0}
\setcounter{definition}{0}
\setcounter{cor}{0}
\setcounter{remark}{0}
\setcounter{example}{0}

\label{stat}
\subsection{Flag structures, double fibrations, and  motivating examples}
\label{df}
Let $\Delta$ be a bracket generating distribution on a manifold $\mathcal S$, i.e. a vector subbundle of the tangent bundle $T\mathcal S$.
Assume that for any point $\gamma\in\mathcal S$ a submanifold $\mathcal Y^\gamma$ in a flag variety of the fiber $\Delta(\gamma)$ of $\Delta$ is chosen smoothly with respect to $\gamma$. We call such structure a \emph{flag structure} and denote it by $\bigl(\Delta, \{\mathcal Y^\gamma\}_{\gamma \in\mathcal S}\bigr)$. We are interested in the (local) equivalence problem for these structures with respect to the group of diffeomorphisms of $\mathcal S$.

The flag structures appear in the natural equivalence problems for differential equations via the so-called linearization procedure \cite{doub2, doub3, var} and in the variational approach to equivalence of vector distributions and more general geometric structures on manifolds via the so-called symplectification/linearization procedure \cite{agrachev, agrgam1,jac1, zelvar, doubzel1, doubzel2, doubzel3}.
These procedures  are described in this subsection from the point of view of the geometry of double fibrations.  The motivating examples are discussed briefly in  subsections \ref{difeqprelim}-\ref{dprelim} and in the full generality in section \ref{motsec}.

The cases when the additional structures (for example symplectic or Euclidean  structures) are  given on each space $\Delta(\gamma)$
fit into our theory as well.  In most of the applications  the dimension of submanifolds $\mathcal Y^\gamma$
is equal to one, i.e. each $\mathcal Y^\gamma$ is an unparameterized curve. The case, when each  curve $\mathcal Y^\gamma$ is parameterized, i.e. some parametrization on it is fixed, up to a translation, is discussed as well.

In most of the applications the flag structures are not the original objects of study. They arise in a natural way from the equivalence problems for other geometric structures, often after some preliminary steps.
On many occasions one can intrinsically assign  to an original  geometric structure another  manifold endowed with two (or more) foliations.
For example, this situation naturally occurs in the geometry of differential equations.  A differential equation is considered in differential geometry as a submanifold
$\mathcal E$, the \emph{equation manifold},
of the corresponding  jet space.  On one hand, $\mathcal E$
is foliated by the solutions of the equation (prolonged to this jet space).
On the other hand, $\mathcal E$ is foliated by the fibers over the jet spaces of lower order.

Another, more involved instance of this situation appears in the geometry of submanifolds $\mathcal U$ of a tangent bundle $TM$  of a manifold $M$ with respect to the natural action of the group of diffeomorphisms. For example, this includes the geometry of vector distributions, sub-Riemannian and sub-Finslerian structures as particular cases. The insight here comes from the Geometric Control Theory: one associates to the submanifold $\mathcal U$ a variational problem
in a natural way and uses the Hamiltonian formalism provided by the Pontryagin Maximum Principle in order to describe the extremal of this variational problem.
The role of the manifold endowed with two foliation plays a special submanifold of the cotangent bundle of $M$ that is dual to $\mathcal U$  in a certain sense and is foliated by extremals of this variational problem. The second foliation is a foliation induced by the canonical fibration $T^*M\mapsto M$.


Returning to the general situation, assume that a manifold $\widetilde{\mathcal S}$ is foliated by two foliations so that the corresponding quotient manifold $\mathcal S_1$ and $\mathcal S_2$ of the leaves of these foliations are well defined. Then we have the following double fibration:

$$\xymatrix{
& \widetilde{\mathcal S} \ar[ld]_{\pi_{1}} \ar[rd]^{\pi_{2}} &\\
\mathcal S_1  & &  \mathcal S_2 }$$
where $\pi_i:\widetilde{\mathcal S}\rightarrow \mathcal S_i$, $i=1,2$ are the canonical projections to the corresponding quotient manifolds.
Assume that  $\mathcal C_i$ is the distribution of tangent spaces to the fibers of $\pi_i$. For any $\gamma_1 \in \mathcal S_1$
(representing the fiber of $\pi_1$) let
\begin{equation}
\label{Jacprelim}
\mathcal Y_1^{\gamma_1}(\lambda):=d\pi_1 \mathcal C_2(\lambda), \quad \forall \lambda \in \pi_1^{-1}(\gamma_1),\\
 \end{equation}

Then  $\mathcal Y_1^{\gamma_1}(\lambda)$ is a subspace in $T_{\gamma_1}\mathcal S_1$. The map $\lambda\mapsto \mathcal Y_1^{\gamma_1}(\lambda)$ is called the \emph{linearization of the fibration $\pi_2$ by the fibration $\pi_1$ along the fiber $\gamma_1$.} Speaking informally,
the map
$\lambda\mapsto \mathcal Y_1^{\gamma_1}(\lambda)$ describes the dynamics of the fibers of the fibrarion $\pi_2$ by the foliation of the fibers of  $\pi_1$ along the fiber $\gamma_1$.  If (the image of) this linearization is a submanifold $\mathcal Y_1^{\gamma_1}$ in the corresponding Grassmannianfor any point $\gamma_1$, then we get the flag structure on $\mathcal S_1$ with $\Delta=T\mathcal S_1$.
In the same way for all $\gamma_2 \in \mathcal S_2$ one can consider the linearizations $\mathcal Y_2^{\gamma_2}$ of the fibration $\pi_1$ by the fibration $\pi_2$ along the fiber $\gamma_2$, which, under obvious assumption, define the flag structure on $\mathcal S_1$ with $\Delta=T\mathcal S_1$.

Often, at least one of submanifolds $\mathcal Y_1^{\gamma_1}$ and $\mathcal Y_2^{\gamma_2}$ in the corresponding Grassmannian has nontrivial local geometry and this gives an effective way to obtain differential invariants of the original geometric structures via the passage to the corresponding flag structure.
Besides, in some situations (as in the case of vector distributions), the manifold $\widetilde{\mathcal S}$ is endowed with the additional distribution $\widetilde \Delta$ such that both  distirbutions $\mathcal C_i$ are subdistribution of it and one of these distributions , say $\mathcal C_1$ , satisfies $[\mathcal C_i, \widetilde \Delta]\subset\widetilde \Delta$, i.e. any section of $\mathcal C_1$ is an infinitesimal symmetry of $\widetilde \Delta$. Then $\Delta:=\pi_{{1}_*}\widetilde \Delta$ is a well defined distribution in $\mathcal S_1$ and $\mathcal Y_1^{\gamma_1}(\lambda)$ is a submanifold in the
corresponding Grassmannian of subspaces in $\Delta(\gamma_1)$ for any $\gamma_1\in S_1$. This motivates us to consider the case of flag structures with $\Delta$ not equal to the tangent space of the ambient distribution.

\subsection{Flag structures in geometric theory of differential equations}
\label{difeqprelim}
 The more detailed description of this class of problems is given in  Examples 1 and 2 of section \ref{motsec}.
As was already mentioned,  a differential equation is considered in differential geometry as a submanifold $\mathcal E$, the \emph{equation manifold}, of the corresponding  jet space. Hence the equation manifold plays the role of $\widetilde S$ in the general scheme. The first foliation on $\mathcal E$ is given by the solutions (prolonged to this jet space), considered as the leaves. So the space of solutions  ${\rm Sol}$ plays the role of $\mathcal S_1$ in the general scheme. Since the jet space is a bundle over all jet spaces of lower order, we usually have not one but many (nested) additional foliations/fibrations. The linearization of all of these fibrations by the fibration $\pi_1$ along each solutions gives the flag structure on the space of solution with nontrivial local geometry of submanifolds in the corresponding flag variety.

For example, in the case of scalar ordinary differential equations of order $n\geq 3$ up to contact transformations one gets in this way the curve of complete flags in the tangent space to any point in the space of solution. Moreover, the picture can be simplified here, because  the curve of flags can be recovered by osculation from the curve of one-dimensional subspaces in these flags, i.e. from the non-degenerate curve in the projective space. The latter is the linearization by $\pi_1$ of the fibration given by the projection to the jet-space of the previous order. The fundamental set of invariants of curves in projective spaces were constructed by Wilczynski \cite{wil} in 1906 and they produce the contact invariants of the original differential equation.

\subsection{Flag structures associated with sub-Riemannian and sub-Finslerian structures}
\label{sRprelim}
The more detailed description of this class of problems is given in  Example 4 of section \ref{motsec}.
By a geometric structure on a manifolds $M$ we mean a submanifold of its tangent bundle $TM$ transversal to the fibers.

Let us give several examples. A distribution $D$ on $M$ is given by fixing a vector subspace $D(q)$ of $T_qM$, depending
smoothly on $q$. A sub-Riemannian
structure $\mathcal U$ on $M$ with underlying distribution $D$ is given by choosing on each space $D(q)$ an ellipsoid $\mathcal U(q)$ symmetric with respect to the origin.
In this case
$\mathcal U(q)$ is the unit sphere w.r.t. the unique Euclidean norm on $D(q)$, i.e. fixing an ellipsoid in
$D(q)$ is equivalent to fixing an Euclidean norm on
$D(q)$ for any $q\in M$. If in the constructions above we replace the ellipsoids by the
boundaries of strongly convex bodies in $D(q)$ containing the origin
in their interior
(sometimes also assumed to be  symmetric w.r.t.
the origin) we will get a sub-Finslerian structure on $M$. In the case when $D=TM$ we obtain in this way classical Riemannian and Finslerian structures.

The key idea, due to A. Agrachev,  of the variational approach to the equivalence problem of geometric structures (or of the  sympelctification of the equivalence problem) is that invariants of a geometric structure  $\mathcal U$ on a manifold $M$ can be obtained by studying the flow of extremals of variational problems naturally associated with $\mathcal U$ \cite{agrgam1, jac1}.
For this first one can define  \emph{admissible  (or horizontal) curves of the structure $\mathcal U$}. A Lipschitzian curve $\alpha(t)$ is called \emph{admissible (horizontal)}  if $\dot\alpha(t)\in \mathcal U\cap T_{\alpha(t)}M$ for almost every $t$.
 Then one can associate with $\mathcal U$  the following family of the so-called time-minimal problems: given two points $q_0$ and $q_1$ in $M$ to steer from $q_0$ to $q_1$ in a minimal time moving along admissible curves of $\mathcal U$.
For sub-Riemannian (sub-Finslerian) structures these time-minimal problems are exactly the length minimizing problems.

The Pontryagin Maximum Principal of Optimal Control gives a very efficient way to describe extremals  of the time-minimal problems.
For simplicity assume
that the \emph{maximized Hamiltonian} of the Pontryagin Maximum Principle
\begin{equation}
\label{maxH} H(p, q)=\max_{v\in \mathcal U(q)} p(v), \quad q\in M,
p\in T_q^*M
\end{equation}
is well defined and smooth in an open domain $O\subset T^*M$ and for
some $c>0$ (and therefore for any $c>0$ by homogeneity of $H$ on
each fiber of $T^*M$) the corresponding level set
\begin{equation}
\label{Hc}
\mathcal
H_c=\{\lambda\in O: H(\lambda)=c\}
\end{equation}
 is nonempty and consists of
regular points of $H$ (for more general setting see \cite{jac1} or Remark \ref{nonmonotrem} below).

Consider the Hamiltonian vector field $\overrightarrow H$ on $\mathcal H_c$, corresponding to the Hamiltonian $H$, i.e. the
vector field satisfying $i_{\overrightarrow H}\hat\sigma=-dH$, where
$\hat\sigma$ is the canonical symplectic structure on $T^*M$. The
integral curves of this Hamiltonian system are normal Pontryagin
extremals of the time-optimal problem, associated with the geometric
structure $\mathcal U$, or, shortly, the normal extremals of $\mathcal
U$. For example, if $\mathcal U$ is a sub-Riemannian structure with
underlying distribution $D$, then the maximized Hamiltonian
satisfies $H(p,q)=||p|_{_{D(q)}}||_q$, i.e. $H(p,q)$ is
equal to the norm of the restriction of the functional $p\in T_q^*M$
on $D(q)$ w.r.t. the Euclidean norm $||\cdot||_q$ on
$D(q)$; $O=T^*M\backslash D^\perp$.
The
projections of the trajectories of the corresponding Hamiltonian
systems to the base manifold $M$ are normal sub-Riemannian
geodesics. If $D=TM$, then they are exactly the Riemannian
geodesics of the corresponding Riemannian structure.


How the flag structures appear here? The Hamiltonian level set $\mathcal H_c$ with $c>0$ plays the role of the manifold $\widetilde{\mathcal S}$ in the general scheme. The first  foliation on $\mathcal H_c$ is given by the normal Pontryagin extremals and the second foliation is given by the fibers of the canonical projection from $T^*M$ to $M$, restricted to $\mathcal H_c$. The role of the manifold $\mathcal S_1$ in the general scheme is played by
the space $\mathfrak N$  of all normal
extremals. Note that the space  $\mathfrak N$ is endowed with the natural symplectic structure, induced from the canonical symplectic structure on $T^*M$ and the linearization of the second foliation by the first one along any normal extremal $\gamma$ is a curve of
of Lagrangian subspace of $T_\gamma \mathfrak N$ with respect to this symplectic structure. This defines the flag structure on $\mathcal N$, where the
 the distribution $\Delta$ again coincides with the whole tangent bundle of $\mathfrak N$.
  It corresponds to the linearziation of the flow of extremals along the extremal $\gamma$, i.e to the Jacobi equation along $\gamma$. Therefore it is called the \emph{Jacobi curve of $\gamma$} \cite{agrgam1, jac1}.
 Collecting the osculating spaces of this curve of any order together with their skew symmetric complements w.r.t to the natural symplectic structure
one finally assigns to $\gamma$
a curve of isotropic/coisotropic subspaces  (or the curves of symplectic flags in the terminology of \cite{flag2}) in each space $T_\gamma N$
This curve plays the role of $\mathcal Y^\gamma$ in the flag structure associated with the geometric structure $\mathcal U$ and it will be called the Jacobi curve of the normal extremal $\gamma$ as well, where the role of the distribution $\Delta$ plays the tangent bundle $T\mathcal N$. An  additional  feature of this example is that all curves of flags here are parametrized, because all normal extremals are parametrized.

\subsection {Flag structures for bracket generating distributions}
\label{dprelim}
The more detailed description of this class of problems is given in  Example 3 of section \ref{motsec}.
 If as a geometric structure $\mathcal U$ a bracket  generating distribution $D$ (without any additional structure on it) is considered, then the time-minimal problem associated with $\mathcal U$ does not make sense: any two points can be connected by an admissible curve to $D$ in an arbitrary small time. Instead, one can consider any variational problem on a space of admissible curves of this distribution with fixed endpoints.  Among all Pontryagin  extremals in most of the cases there are plenty of abnormal extremals that are the extremals with vanishing Lagrangian multiplier near the functional.
Therefore they do not depend on the functional but on the distribution $D$ only.

Abnormal extremals live on the zero level set of the maximized Hamiltonian $H$ from  \eqref{maxH}, which is also called the annihilator of the distribution $D$ and is denoted by $D^\perp$.  It is more convenient to projectivize the picture by working in the projectivized cotangent bundle $\mathbb P T^*M$.
Then abnormal extremals foliate certain even-dimensional submanifold 
$W_D$ of $\mathbb P D^\perp$ (see Example 3 of section \ref{motsec} for more detail), which plays the role of the manifold $\widetilde S$ in the general scheme. Similarly to the previous case, the second foliation is given
by the fibers of the canonical projection from $\mathbb P T^*M$ to $M$, restricted to $W_D$.
The role  of the manifold $\mathcal S_1$
in the general scheme
is played here by the space $\mathfrak A$ of the abnormal
extremals considered as the quotient of $W_D$ by the foliation of these extremals. The role of the distribution $\Delta$ in the flag structure on $\mathcal A$ is played by the natural contact distribution  induced on $\mathfrak A$ by the tautological $1$-form (the Liouville form )  in $T^*M$. The canonical symplectic form is defined, up to a multiplication by a non-zero constant, on each space $\Delta(\gamma)$. The linearization of the second foliation by the first one along any abnormal extremal $\gamma$ is
a curve of isotropic subspaces with respect to this symplectic form in $\Delta(\gamma)$. 
Collecting the osculating spaces of this curve of any order together with their skew symmetric complements w.r.t to the natural symplectic structure
one finally assigns to $\gamma$
a curve of isotropic/coisotropic subspaces  (or the curves of symplectic flags in the terminology of \cite{flag2}) in each space $\Delta(\gamma)$ called the \emph{Jacobi curve of  the
abnormal extremal $\gamma$}. This curve plays the role of $\mathcal Y^\gamma$ in the flag structure associated with the distribution $D$.

\subsection{Passage to flag structures versus Tanaka approach for distributions}
Why the passage to flag structures via the linearization or the sympelctification/linearization procedures is useful and even crucial in some cases?

First of all, making this passage, we immediately arrive to the (extrinsic) geometry of submanifolds  of a flag variety of a vector space $W$ with respect to the action of a subgroup $G$ of $GL(W)$, which  is simpler in many respects than the original equivalence problem. Assume that we reduced some geometric structure to a flag structure $(\Delta,\{\mathcal Y^\gamma\}_{\gamma \in\mathcal S}\bigr)$. Then any invariant of a submanifold $\mathcal Y^\gamma$ with respect to the natural action of the group $GL\bigl(\Delta(\gamma)\bigr)$ is obviously an invariant of the original equivalence problem. Moreover, in general a subgroup of $GL\bigl(\Delta(\gamma)\bigr)$, which is in fact isomorphic to the group of automorphism of the Tanaka symbol of the distribution $\Delta$ at $\gamma$, acts naturally on $\Delta(\gamma)$ (see subsection \ref{prelim} below for detail). Therefore, any invariant of a submanifold of $\mathcal Y^\gamma$ with respect to the natural action of this group, which in general might be a proper subgroup of $GL\bigl(\Delta(\gamma)\bigr)$,   is an invariant of the original equivalence problem. In many situation \emph{this gives a very fast and efficient way to construct and compute important invariants of the original structures.}

For example, in the cases of scalar differential equations up to contact transformations (Example 1 of section \ref{motsec}) and of rank $2$ distributions (a particular case of Example 3 of section \ref{motsec}) the curves $\mathcal Y^\gamma$ are curves of complete flags that can be recovered by osculation from the curves of their one-dimensional subspaces, i.e. from curves in projective spaces.
The classical  Wilczynski invariants of curves in projective spaces ~\cite{wil} immediately produce invariants of the original structures.

In the case of rank 2 distributions these curves in projective spaces are not arbitrary but they are so-called self-dual
\cite{var}.
In particular, the  first nontrivial Wilczynski invariant of self-dual curves in projective spaces produces the invariant of rank $2$ distributions in $\mathbb R^5$  which coincides with the famous  \emph{Cartan covariant binary biquadratic
form} of rank $2$ distributions in $\mathbb R^5$  \cite{cartan,zelcart, zelnur}. It gives the new and quite effective way to compute this Cartan invariant and to generalize it to rank $2$ distributions in $\mathbb R^n$ for arbitrary $n\geq 5$ \cite{zelvar}.

Moreover, the passage from a geometric structure to the corresponding flag structures via the linearization or the symplectification/linearization procedures allows one not only to construct some invariants but provides an effective  way to assign to this geometric structure a canonical (co)frame
on some (fiber) bundle over the ambient manifold. In some cases this way is much more uniform, i.e can be applied simultaneously to a much wider class of structures, than the classical approaches such as the Cartan equivalence method (or its algebraic version, developed by N. Tanaka \cite{tan, tan2}, see also surveys \cite{aleks, zeltan}) applied to a geometric structure directly.

Let us clarify the last point.
In general, in order to construct canonical frames for  geometric structures, first, one needs to choose a basic characteristic of these geometric structures, then to choose the most simple homogeneous model among all structures with this characteristic, if possible, and finally to imitate  the construction of the canonical frame for all structures with this characteristic by the construction of such frame for this simplest model. And it is desirable that the latter can be done without a further branching.
The main question is what basic characteristic to choose for this goal?

For example, in the Tanaka theory \cite{tan}, applied to distributions, as such basic characteristic one takes the so-called Tanaka symbol of a distribution at a point   (see also subsection \ref{prelim} below).  Algebraically a Tanaka symbol is a graded nilpotent Lie algebra. The simplest model among all distributions with the given constant Tanaka symbol is the corresponding left-invariant distribution on the corresponding Lie group. The construction of the canonical frames for all distributions with the given constant Tanaka symbol (i.e. such that their Tanaka symbols at all points are isomorphic one to each other as graded nilpotent Lie algebras) can be indeed imitated by its construction for the simplest homogeneous model and can be described purely algebraically in terms of the so-called universal algebraic prolongation of the Tanaka symbol (see subsection \ref{quasiprosec} below, especially Theorem \ref{tantheor} there).

However, it is hopeless in general to classify all possible graded nilpotent Lie algebras and the set of all graded  nilpotent Lie algebras contains moduli (continuous parameters). Therefore,  first,  generic distributions may have non-isomorphic Tanaka symbols at different points so that in this case Tanaka theory cannot be directly applied and, second, even if we restrict ourselves to distributions with a constant Tanaka symbol only, without the classification of these symbols we do not have a complete picture about the geometry of distributions.

Applying the symplectification/linearization procedure to particular classes of distributions (to rank $2$ distributions in \cite{doubzel2} and to rank $3$ distributions in \cite{doubzel3}) we realized that we can distinguish another basic characteristic of a distribution, which is coarser than the Tanaka symbol and, more importantly, classifiable and does not depend on continuous parameters.
This is the so-called symbol of the Jacobi curve of a generic abnormal extremal of the distribution at the generic point or shortly, the \emph{Jacobi symbol} of a distribution (see
our recent preprint \cite{jacsymb} for more detail). The notion of the symbol of a submanifold in a flag variety at a point was introduced in \cite{flag2, flag1}, see also subsection \ref{quasiflag} below. Algebraically such symbol is a subspace (a line in the case of curves) of degree $-1$ endomorphisms of a graded vector space, up to a natural conjugation by endomorphisms of nonnegative degrees, i.e. it is much more simple algebraic object than a Tanaka symbol. Informally speaking, it represents a type of the tangent space to the submanifold of flags.

For Jacobi curves of abnormal extremals the symbol at a point is a line of a degree $-1$ of a so-called graded symplectic space (see \cite[subsection 7.2]{flag2}) from a symplectic algebra of this space. All such lines, up to the conjugation by symplectic transformations, were classified in \cite[subsection 7.2]{flag2}. In particular, the set of all equivalence classes of such lines is discrete. This in turn gives the classification of all Jacobi symbols of distributions and leads to the following new formulation: \emph{to  construct uniformly canonical frames for all distributions with given constant Jacobi symbols}.

This problem leads in turn to a more general problem of construction of canonical frames for flag structures with given constant flag symbol and it was the main motivation for all developments of the present paper. The solution of the latter general problem is given by Theorem \ref{cor1} below, which is the direct consequence of the main result of the present paper, given by Theorem \ref{maintheor}.
Theorem \ref{cor1} shows that  the construction of canonical frames for distributions
can be described in terms of natural algebraic operations on the Jacobi symbols in the category of graded Lie algebras.
Note that from the fact that the set of all Jacobi symbols of distributions is discrete it follows that the assumption of the constancy of a Jacobi symbol holds automatically in a neighborhood of a generic point of an ambient manifold. More detailed applications of Theorem \ref{cor1} to the flag structures appearing after the symplectification/linearization procedure of distributions will be given in \cite{jacsymb}.


\subsection{Flag structures and $G$-structures on filtered manifolds}

Classical $G$-structures are defined as the reduction of the principal frame bundle $\mathcal F(\mathcal S)$ to a certain subgroup $G$ in $GL(V)$, where $V$ is a model tangent space to $\mathcal S$. In many cases the action of $G$ on a variety of flags of $V$
has a unique closed orbit $\mathcal Y$, and $G$ itself can be recovered as a symmetry group of this orbit.

A typical example is the irreducible action of $GL(2,\R)$ on any finite-dimensional vector space $V$. The induced action of $GL(2,\R)$ on the projectivization $P(V)$ has a rational normal curve as a unique closed orbit.

Assume that $\Delta$ coincides with the tangent bundle $T\mathcal S$. We say that the flag structure is of type $\mathcal Y$, if its fibers are equivalent to $\mathcal Y$ at all points of $M$. As the orbit $\mathcal Y$ in the flag variety
defines the subgroup $G\subset GL(V)$ uniquely an vice versa, we see that there is a one-to-one correspondence between flag structures of type $\mathcal Y$ and $G$-structures on $M$. For example, $GL(2,R)$-structures are defined as reductions of the principal frame bundle to the irreducible subgroup $GL(2,R)$ of $GL(V)$. Equivalently, in our terminology they are flag structures defined by a family of rational curves $\mathcal Y^\gamma \subset \mathbb P(T_\gamma \mathcal S)$ smoothly depending on the point $\gamma$. For more details on $GL(2,R)$-structures and their relationship with invariants of ODEs see \cite{doub3, dungod, duntod, godnur}.

Flag structures of type $\mathcal Y$ constitute a very special type of flag structures we study here, because we consider quite arbitrary distributions $\Delta$ (with constant Tanaka symbol) and we do not assume that the submanifolds $\mathcal Y^\gamma$ are isomorphic at different points. However, in some sense,  we imitate the construction of canonical frame for general flag structures by approximating them by
the flag structure with  submanifolds $\mathcal Y^\gamma$
being the orbits of a certain subgroup of the group of automorphism of the Tanaka symbol of  $\Delta$.
This allows us to combine together both our version of construction of moving frames for submanifolds in flag varieties  \cite{flag1, flag2} and the prolongation theory for $G$-structures on filtered manifolds \cite{aleks,tan,zeltan}. As a result we get a powerful technique for solving the local equivalence problem for arbitrary flag structures.

\subsection{Construction of canonical frames for
flag structures: preliminary steps}
\label{prelim}
After considering separately the equivalence problems for several particular classes of differential equations and geometric structures via the linearization and the symplectification/linerization procedure we arrived to the necessity to develop a general approach to the equivalence problem of flag structures.

Let $\bigl(\Delta, \{\mathcal Y^\gamma\}_{\gamma \in\mathcal S}\bigr)$ be a flag structure.
To begin with, let us discuss the geometry of the distribution $\Delta$ itself.
Let $\Delta^{-1}\subset \Delta^{-2}\subset\ldots$
be the \emph{weak derived flag}  (of $\Delta$), defined as follows:
Let  $X_1,\ldots X_l$ be $l$ vector fields constituting a local basis of a distribution $\Delta$, i.e. $\Delta= {\rm span}\{X_1, \ldots, X_l\}$
in some open set in $\mathcal S$. Then $\Delta^{-j}(\gamma)$ is the linear span of all iterated Lie brackets of these vector fields, of length not greater than  $j$,  evaluated at a point $\gamma$.

The basic characteristic of a distribution $\Delta$ at a point $\gamma$ is its \emph{Tanaka symbol}.
To define it let $\mg^{-1}(\gamma)\stackrel{\text{def}}{=}\Delta^{-1}(\gamma)$ and $\mg^{j}(\gamma)\stackrel{\text{def}}{=}\Delta^{j}(\gamma)/\Delta^{j+1}(\gamma)$ for $j<-1$. Consider the graded space
\begin{equation}
\label{symbdef}
\mathfrak{m}(\gamma)=\bigoplus_{j=-\mu}^{-1}\mg^j(\gamma),
\end{equation}
corresponding to the filtration
\begin{equation*}
\Delta(\gamma)=\Delta^{-1}(\gamma)\subset \Delta^{-2}(\gamma)\subset\ldots\subset \Delta^{-\mu+1}(\gamma)
\subset \Delta^{-\mu}(\gamma)=T_\gamma\mathcal S.
\end{equation*}
 This space is endowed naturally with the structure of a graded nilpotent Lie algebra, generated by
$\mg^{-1}(\gamma)$. Indeed, let $\mathfrak p_j:\Delta^j(\gamma)\mapsto \mg^j(\gamma)$ be the canonical projection to a factor space. Take $Y_1\in\mg^i(\gamma)$ and $Y_2\in \mg^j(\gamma)$. To define the Lie bracket $[Y_1,Y_2]$ take a local section $\widetilde Y_1$ of the distribution $\Delta^i$ and
a local section $\widetilde Y_2$ of the distribution $\Delta^j$ such that $\mathfrak p_i\bigl(\widetilde Y_1(\gamma)\bigr) =Y_1$
and $\mathfrak p_j\bigl(\widetilde Y_2(\gamma)\bigr)=Y_2$. It is clear that $[Y_1,Y_2]\in\mg^{i+j}(\gamma)$. Put
\begin{equation}
\label{Liebrackets}
[Y_1,Y_2]\stackrel{\text{def}}{=}\mathfrak p_{i+j}\bigl([\widetilde Y_1,\widetilde Y_2](\gamma)\bigr).
\end{equation}
It is easy to see that the right-hand side  of \eqref{Liebrackets} does not depend on the choice of sections $\widetilde Y_1$ and
$\widetilde Y_2$.
Besides, $\mg^{-1}(\gamma)$ generates the whole algebra $\mathfrak{m}(\gamma)$.
A graded Lie algebra satisfying the last property is called \emph{fundamental}.
The graded nilpotent Lie algebra $\mathfrak{m}(\gamma)$ is called the \emph {Tanaka symbol of the distribution $\Delta$ at the point $\gamma$}.

For simplicity assume that the Tanaka symbols $\mathfrak{m}(\gamma)$ of the distribution $\Delta$ at the point $\gamma$ are isomorphic, as graded Lie algebra, to a fixed fundamental graded Lie algebra $\mathfrak m= \displaystyle{\bigoplus_{i=-\mu}^{-1}} \mg^i$.
In this case  $\Delta$ is said to be of \emph{constant symbol $\mathfrak{m}$} or of \emph{constant type $\mathfrak{m}$}.
Note that in all our motivating Examples 1-4 given in section \ref{motsec} the distribution $\Delta$ is of constant type: in Examples 1, 2, and 4 $\Delta=T\mathcal S$ and the symbol  $\mathfrak m$ is graded trivially, $\mathfrak m=\mg^{-1}$, i.e. $\mathfrak m$ is the commutative Lie algebra of dimension equal to $\dim \mathcal S$; in example 3
$\Delta$ is a contact distribution on $\mathcal S$
and its symbol
is isomorphic to the Heisenberg algebra of dimension equal to $\dim \mathcal S$ with the grading $\mg^{-1}\oplus\mg^{-2}$, where $\mg^{-2}$ is the center.

Note also that one can distinguish the so-called \emph{standard or flat distribution $D_{\mathfrak{m}}$ of constant type $\mathfrak{m}$}.
For this let $M(\mathfrak{m})$ be the simply connected Lie group with the
Lie algebra $\mathfrak{m}$ and let $e$ be its identity. Then $D_\mathfrak{m}$ is the left invariant distribution on $M(\mathfrak{m})$ such that $D_{\mathfrak{m}}(e)=\mg^{-1}$. The distribution $D_{\mathfrak{m}}$ is in a sense the most simple one among all distributions of constant type $\mathfrak{m}$. Note that in all our motivating Examples 1-4 given in section \ref{motsec} the distribution $\Delta$ has a trivial local geometry, i.e. it is locally equivalent to the flat distribution with the same symbol. However, we do not need to assume such local triviality to develop our theory.

Further, to a distribution $\Delta$ with constant symbol $\mathfrak m$ one can assign a natural principle bundle over $\mathcal S$.
%
Let $G^0(\mathfrak{m})$ be the group of automorphisms
of the graded Lie algebra $\mathfrak{m}$; that is, the group of all automorphisms $A$ of the linear space $\mathfrak{m}$ preserving both the Lie brackets ($A([v,w])=[A(v),A(w)]$
for any $v,w\in \mathfrak{m}$) and the grading ($A (\mg^i)=\mg^i$ for any $i<0$).
Let
$P^0(\mathfrak m)$ be the set of all pairs $(\gamma,\vf)$, where  $\gamma\in \mathcal S$ and  $\vf:\mathfrak{m}\to\mathfrak m(\gamma)$
is an isomorphism of the graded Lie algebras $\mathfrak {m}$ and $\mathfrak m(\gamma)$.
 Then $P^0(\mathfrak m)$ is a principal $G^0(\mathfrak m)$-bundle over $\mathcal S$. The right action $R_A$ of an automorphism $A\in G^0(\mathfrak{m})$ is as follows:  $R_A$ sends $(\gamma,\vf)\in P^0(\mathfrak m)$ to $(\gamma,\vf\circ A)$, or shortly $(\gamma,\vf)\cdot R_A=(\gamma,\vf\circ A)$. Note that since $\mg^{-1}$ generates
 $\mathfrak m$, the group $G^0(\mathfrak{m})$ can be identified with a subgroup of $\text{GL}(\mg^{-1})$. By the same reason a point $(\gamma,\vf)\in P^0(\mathfrak m)$ of a fiber of $P^0(\mathfrak m)$ is uniquely defined by $\vf|_{\mg^{-1}}$. So one can identify
$P^0(\mathfrak m)$ with the set of pairs $(\gamma,\psi)$, where  $\gamma\in \mathcal S $ and $\psi:\mg^{-1}\to \Delta(\gamma)$  can be extended to an automorphism of the graded Lie algebras $\mathfrak {m}$ and $\mathfrak m(\gamma)$.
Speaking informally, $P^0(\mathfrak m)$ can be seen as a $G^0(\mathfrak{m})-$reduction of the bundle of all frames of the distribution $\Delta$.
Besides, the corresponding Lie algebra $\mg^0(\mathfrak m)$ is the algebra of all derivations $d$ of $\mathfrak m$, preserving the grading
(i.e. $d \mg^i\subset \mg^i$ for all $i<0$) and it can be identified with a subalgebra of $\gl(\mg^{-1})$.


If $\Delta=T\mathcal S$ (as in Examples 1, 2, and 4 of section \ref{motsec}), then  $G^0(\mathfrak m)={\rm GL}(\mathfrak m)$, and $P^0(\mathfrak m)$ coincides with the bundle $\mathcal F(\mathcal S)$ of all frames on $\mathcal S$. In this case $P^0$ is nothing but a usual $G^0(\mathfrak m)$-structure. If $\Delta$ is a contact distribution
(as in example 3), then a non-degenerate skew-symmetric form $\Omega$ is well defined on $\mg^{-1}$, up to a multiplication by a nonzero constant.
The group $G^0(\mathfrak m)$ of automorphisms of $\mathfrak m$ is isomorphic to the group $\text{CSP}(\mg^{-1})$ of conformal symplectic transformations of $\mg^{-1}$, i.e. transformations preserving the form $\Omega$, up to a multiplication by a nonzero constant.

Note that on each fiber $\Delta(\gamma)$ a group $G^0_\gamma$ of automorphism of the symbol $\mathfrak m(\gamma)$ acts naturally. Obviously, by constructions $G^0_\gamma$ is a subgroup of $GL\bigl(D(\gamma)\bigr)$  and it is isomorphic by conjugation to $G^0(\mathfrak m)$. For example, in the case when $\Delta$ is contact $G^0_\gamma$ is a group of all conformal symplectic transformations of $\Delta(\gamma)$ with respect to the natural conformal symplectic structure on $\Delta(\gamma)$.

\subsection{Quasi-principal bundles associated with flag structures}
Additional structures on the distribution $\Delta$  can be encoded as fiber subbundles of the bundle $P^0(\mathfrak m)$.
Since in our case the submanifolds of flags
on each fiber of $\Delta$ are given, it is natural to fix a filtration on the space $\mg^{-1}$
(nonincreasing by inclusion):
\begin{equation}
\label{filtg}
\{\mg_{j}^{-1}\}_{j\in \mathbb Z},\quad   \mg_{j}^{-1}\subset \mg_{j-1}^{-1},\quad  \mg_{j}^{-1}\subseteq\mg^{-1}, \quad \dim\, \mg_{j}^{-1}=\dim\,{\mathfrak  J}^{\gamma}_j(x), x\in\gamma.
\end{equation}
Then we can consider the subbundle $P^0_+(\mathfrak m)$ of $P^0(\mathfrak m)$ consisting of the pairs $(\gamma,\vf)$ such that $\vf:\mathfrak m\rightarrow \mathfrak m(\gamma)$ is an isomorphism  of the graded Lie algebras $\mathfrak m$ and $\mathfrak m(\gamma)$ such that the flag
$\{\vf\bigl(\mg_{j}^{-1}\bigr)\}_{j\in \mathbb Z}$ belongs to the submanifold of flags $\mathcal Y^\gamma$.

Next step is to distinguish  a fiber subbundle of $P^0_+(\mathfrak m)$ of the minimal possible dimension naturally associated with  the flag structure $(\Delta,\{\mathcal Y^\gamma\}_{\gamma\in\mathcal S})$.
For this  one needs  to take a closer look to the (extrinsic) geometry of submanifold of flags $\mathcal Y^\gamma$ with respect to the natural action of  the group $G^0_\gamma$ ($\sim G^0(\mathfrak m)$). In \cite{flag2, flag1} we developed an algebraic version of Cartan's method of equivalence or an analog of Tanaka prolongation for submanifold of flags of a vector space $W$ with respect to the action of a subgroup $G$ of $GL(W)$.
Under some natural assumptions on the subgroup $G$ and on the flags (that are valid in all our motivating examples), one can pass from the filtered objects to the corresponding graded objects and describe the construction of canonical bundles of moving frames for these submanifolds in the language of pure linear algebra.

Since the paper is rather long and contains many technicalities, we find it worth to describe briefly these natural assumptions in the Introduction. For the detailed description see section \ref{quasisec} below. First of all, we assume that all flags in $\mathcal Y^\gamma$ lie in the same orbit under the natural action of the group $G^0_\gamma$. Further, let $\Lambda$ be a point in $\mathcal Y^\gamma$. Then $\Lambda$ defines a filtration of $\Delta(\gamma)$. This filtration induces the natural filtration of $\gl\bigl(\Delta(\gamma)\bigr)$ and therefore of any its subalgebra (see subsection \ref{comptsubsec} below). In particular, it induces the filtration on the Lie algebra $\mg_\gamma^0$ of the Lie group $G^0_\gamma$. Let $\gr \mg _\gamma^0$ be the graded space corresponding to this filtration. This space can be identified with a Lie subalgebra of $\gl\bigl(\gr \Delta(\gamma)\bigr)$, where  $\gr \Delta(\gamma)$ is the graded space corresponding to the filtration $\Lambda$.

Our next assumption called \emph{compatibility with respect to the grading} is that for any $\Lambda\in \mathcal Y^\gamma $  and any $\gamma \in\mathcal S$ the algebra  $\gr \mg _\gamma^0$
is conjugate to the algebra $\mg_\gamma^0$ via an isomorphism between  $\gr \Delta(\gamma)$ and $\Delta(\gamma)$ (for more rigorous formulation see Definition \ref{compatdefin} below).

Consequently, the algebra $\gr \mg _\gamma^0$ is conjugate to the algebra $\mg^0(\mathfrak m)$ of all derivation of the Tanaka symbol $\mathfrak m$ of the distirbution $\Delta$. This assumption guaranties that the passage to the graded objects does not change the structure group of the bundle $P^0(\mathfrak m)$. In symplectic and conformal cases this is equivalent to considering only isotropic/coisotropic flags, which are preserved by the skew orthogonal or orthogonal complement respectively.

Another assumption is called the \emph{compatibility with respect to the differentiation}. For simplicity first describe it briefly in the case when $\mathcal Y^\gamma$ are curves (for more detail see condition (F3) in subsection \ref{quasiflag} below).
Assume that the curve  $\mathcal Y^\gamma$ is parametrized somehow: $x\mapsto \mathcal Y^\gamma(x)$.
Each curve $\mathcal Y^\gamma_i$ in the corresponding Grassmannian of $\Delta(\gamma)$ can be considered as the tautolological vector bundle over itself with the fiber over a point $\mathcal Y^\gamma_i(x)$ equal to the space $\mathcal Y^\gamma_i(x)$. Taking the linear span of $\mathcal Y^\gamma_i(x)$ with all tangent lines to the sections of this vector bundle (evaluated at points over $\mathcal Y^\gamma_i(x)$), we obtain the \emph{first osculating subspace of  $\mathcal Y^\gamma_i$ at $\mathcal Y^\gamma_i(x)$}. If $\mathcal Y^\gamma$ is a submanifold of flags of arbitrary dimension, the first osculating subspace of  $\mathcal Y^\gamma_i$ at $\mathcal Y^\gamma_i(x)$ is defined as the linear combination of the first osculating subspaces  at $\mathcal Y^\gamma_i(x)$ of all smooth curves on $\mathcal Y^\gamma_i$ passing through this point.
Our next assumption is that  for any integer $i$ this osculating subspace belongs to the space $\mathcal Y^\gamma_{i-1}(x)$ at any point of at $\mathcal Y^\gamma_i(x)$  (see equation \eqref{compatcurve} below). This assumption is natural, because in many cases one starts with a flag structure such that   $\mathcal Y^\gamma$ is a submanifold in a Grassmannian and then using consequent osculations one produces the flag structure compatible with respect to the differentiation (see also the refinement (osculation) procedure in \cite[section 6]{flag2}). Similar conditions appear as infinitesimal period relations on variations of
Hodge structures (\cite{grif}).

The last assumption implies that the tangent space to $\mathcal Y^\gamma$ at $\mathcal Y^\gamma_i(x)$ can be identified with a commutative subalgebra in a space of degree $-1$ endomorphisms of the graded space $\gr \mg_\gamma^0$ ($\sim \mg^0(\mathfrak m)$) corresponding to the filtration on $\mg_\gamma^0$ induced by the $\mathcal Y^\gamma_i(x)$ on $\Delta(\gamma)$.
This subalgebra is called the \emph{(flag) symbol of the flag submanifold  $\mathcal Y^\gamma$ at $\mathcal Y^\gamma_i(x)$}.
Identifying fibers of the distribution $\Delta$ with $\mg^{-1}$, one can associate the flag symbol with an orbit of a commutative subalgebra of $\mg^0(\mathfrak m)$ under the adjoint action of the subgroup $G^0_+(\mathfrak m)$ of the $G^0(\mathfrak m)$ preserving the filtration \eqref{filtg}. Moreover, this subalgebra consists of degree $-1$ endomorphisms under an identification of $\mg^{-1}$ with $\gr \mg^{-1}$.
Finally, we assume that all flag
symbols (for any $\gamma\in \mathcal S$ and any parameter $x$) are conjugated one to each other (via isomorphisms of the appropriate vector spaces) and, consequently,  correspond to the orbit  of one commutative subalgebra $\delta$ of $\mg^0(\mathfrak m)$ under the adjoint action of the subgroup $G^0_+(\mathfrak m)$.
In this case we say that the flag structure $\bigl(\Delta, \{\mathcal Y^\gamma\}_{\gamma \in\mathcal S}\bigr)$ has the \emph{constant (flag) symbol with a representative $\delta$.} In the sequel for shortness we will omit the word ``representative'' here.
As explained in Remark \ref{vinberg},  the assumption of constancy of flag symbol is not restrictive, at least locally, in the case when $\mathcal Y_\gamma$ are curves.

The flag symbol plays the same role in the geometry of submanifolds of flag varieties  as Tanaka symbols in the geometry of distributions. One can define the flat submanifold with constant symbol $\delta$ as an orbit of the filtration \eqref{filtg} under the natural action of the connected, simply connected subgroup of $\gl\bigl(\mg^{-1}\bigr)$ with the Lie algebra $\delta$. Denote by $\mathfrak u^F(\delta)$ the Lie subalgebra of $\mg^0(\mathfrak m)$ of the infinitesimal symmetries of the flat submanifold. Then, as follows from \cite{flag2}, to the flag structure $\bigl(\Delta, \{\mathcal Y^\gamma\}_{\gamma \in\mathcal S}\bigr)$  with constant Tanaka symbol $\mathfrak m$ of $\Delta$ and constant flag symbol $\delta$ one can assign canonically a fiber subbundle $P$ of the bundle $P^0(\mathfrak m)$ with the fibers of dimension equal to $\dim\, \mathfrak u^F(\delta)$, where the bundle $P^0(\mathfrak m)$ is defined in subsection \ref{prelim} above. Note that the algebra $\mathfrak u^F(\delta)$ can be described pure algebraically via a recursive procedure (see relations \eqref{kprolong} and \eqref{AUF} below).




The important point here is that \emph{this subbundle $P$ is as a rule  not  a principal subbundle of the bundle $ P^0(\mathfrak m)$} : the tangent spaces to the fiber of $P$ are identified with subspaces of $\mg^0(\mathfrak m)$ and these subspaces in general vary from one point to another point
of $P$.
Hence, the classical Tanaka prolongation procedure  for the construction of the canonical frame for principal reductions of the bundle $P^0(\mathfrak m)$ (\cite{aleks, tan, zeltan}) in general can not be applied here.
However,  due to the presence of the additional filtration \eqref{filtg} on the space $\mg ^{-1}$ the subbundle $P\rightarrow \mathcal S$ satisfies a nice weaker property: although the tangent spaces to the fibers of $P$ at different points are different subspaces of $\mg^0(\mathfrak m)$ in general, the corresponding graded spaces (with respect to the filtration on these tangent spaces induced by the filtration on $\mg ^{-1}$) are the same. We call a fiber subbundles of $P^0(m)$ satisfying the last property the \emph{quasi-principle frame bundle}.  If, taking into account the identification of $\mg^{-1}$ and $\gr \mg^{-1}$, the graded spaces corresponding to the tangent spaces to the fibers are equal to a subalgebra $\mg^0$ of $\mg^0(\mathfrak m)$, then the quasi-principle frame bundle is said to be of type $(\mathfrak m, \mg^0)$ (for more detail see Definition \ref{quasidef} below). Note that in the case of the flag structure $\bigl(\Delta, \{\mathcal Y^\gamma\}_{\gamma \in\mathcal S}\bigr)$  with constant Tanaka symbol $\mathfrak m$ of $\Delta$ and constant flag symbol $\delta$ the canonical quasi-principle bundle $P$ is of type  $(\mathfrak m, \mathfrak u^F(\delta))$, i.e. $\mg^0=\mathfrak u^F(\delta)$ (Theorem \ref{quasidelta} below).

The main goal of the present paper is
to generalize the Tanaka prolongation procedure to the quasi-principle subbundles of type $(\mathfrak m, \mg^0)$.
We show (Theorem \ref{maintheor}) that one can assign to such bundle the canonical frame on the bundle of dimension equal to the dimension of the universal Tanaka prolongation of the pair  $(\mathfrak m, \mg^0)$ (as in Definition \ref{univdef}).
 As a consequence we obtain a general procedure for the construction of canonical frames for flag structures of constant flag symbol.
In section \ref{motsec}  we give applications of this theory in a unified way (Theorem \ref{main2}) to natural equivalence problems for various classes of differential equations, bracket generating distributions, sub-Riemannian and more general structures on distributions. Sections \ref{firstprolongsec} and \ref{highprolongsec} are devoted to the proof of the main Theorem \ref{maintheor}.

\section{Quasi-principle frame bundles and main results}
\label{quasisec}
\setcounter{equation}{0}
\setcounter{theorem}{0}
\setcounter{lemma}{0}
\setcounter{proposition}{0}
\setcounter{definition}{0}
\setcounter{cor}{0}
\setcounter{remark}{0}
\setcounter{example}{0}

\subsection{Compatibility of flags with respect to the grading}
\label{comptsubsec}
First, let us recall some basic notions on filtered and graded vector spaces. Here we follow \cite[section 2]{flag2}.
Let $\{\Lambda_j\}_{j\in\mathbb Z}$
be a decreasing (by inclusion) filtration (flag) of a vector space $W$: $\Lambda_j\subseteq\Lambda_{j-1}$. It induces the decreasing filtration $\{(\gl(W))_i\}_{i\in \mathbb Z}$ of $\gl(W)$,
\begin{equation}
\label{grgldef}
 (\gl(W))_i=\{A\in \gl(W): A(\Lambda_j)\subset \Lambda_{j+i} \text{ for all j}\},\quad (\gl(W))_i\subset (\gl(W))_{i-1}.
\end{equation}
It also induces the filtration on any subspace of $\gl(W)$.
Further, let   $\gr W$ be  the graded
space corresponding to the filtration $\{\Lambda_j\}_{j\in\mathbb Z}$,
$$\gr W=\bigoplus_{i\in\mathbb Z}\Lambda_i/\Lambda_{i+1}$$
and  let $\gr\,\mathfrak {gl}(W)$ be the graded space corresponding to the filtration \eqref{grgldef},
$$\gr\gl(W) =\bigoplus_{i\in\mathbb Z}(\gl(W))_i/(\gl(W))_{i+1}.$$
The space $\gr\,\mathfrak {gl}(W)$ can be naturally
identified with the space $\mathfrak {gl}\,(\gr W)$,
\begin{equation}
\label{idgr}
\gr\,\mathfrak {gl}(W)\cong \mathfrak {gl}\,(\gr W).
\end{equation}
Indeed, if $A_1$ and  $A_2$ from $(\gl(W))_i$ belong to the same coset of
$(\gl(W))_i/ (\gl(W))_{i+1}$, i.e. $A_2-A_1 \in (\gl(W))_{i+1}$, and if $w_1$ and $w_2$ from $\Lambda_j$ belong to the same coset of $\Lambda_j/\Lambda_{j+1}$,
i.e. $w_2-w_1\in \Lambda_{j+1}$, then $A_1 w_1$ and $A_2 w_2$ belong to the same coset of $\Lambda_{j+i}/\Lambda_{j+i+1}$.
This defines a linear map from $\gr\,\mathfrak {gl}(W)$ to $\mathfrak {gl}\,(\gr W)$. It is easy to see that this linear map is an isomorphism.

Now let $g$
be a Lie subalgebra of $\mathfrak {gl}(W)$.
The
filtration $\{\Lambda_j\}_{j\in\mathbb Z}$ induces the filtration $\{g_i\}_{i\in\mathbb Z}$ on $g$, where
$$g_i=(\gl(W))_{i}\cap g.$$ Let $\gr\, g$ be the graded space corresponding to this filtration.
Note that the space $g_{i}/g_{i+1}$ is naturally embedded into  the space $(\gl(W))_{i}/(\gl(W))_{i+1}$. Therefore,  $\gr\, g$ is naturally embedded into $\gr\,\gl(W)$ and, by above, $\gr\, g$
can be considered as a subspace of $\mathfrak {gl}\bigl(\gr W\bigr)$. It is easy to see that it
is a subalgebra of $\mathfrak {gl}\bigl(\gr W\bigr)$.

In general, the algebra $\gr g$ is not isomorphic to the algebra $g$ (see \cite[Example 2.1]{flag2}).
In order to develop an algebraic version of the Cartan prolongation procedure it is very important that the passage to the graded objects will not change the group in the equivalence problem. Therefore we have to impose that $\gr\, g$ and $g$ are conjugate as in the following
\medskip

\begin{definition}
\label{compatdefin}
We say that the filtration (the flag) $\{\Lambda_j\}_{j\in\mathbb Z}$ of $W$ is compatible with the algebra $g\subset \gl(W)$ with respect to the grading if
there exists an isomorphism $J:\gr W\mapsto W$ such that
\begin{enumerate}
\item $J(\Lambda_i/\Lambda_{i+1})\subset \Lambda_i$, $i\in\mathbb Z$;
\item
$J$ conjugates
the Lie algebras $\gr\,g$ and $g$
i.e.
\begin{equation}
\label{gradg}
g=\{J\circ X\circ J^{-1}: X\in \gr\, g\}.
\end{equation}
\end{enumerate}
\end{definition}


\begin{remark}
\label{compgradrem}
{\rm Obviously, if $G=GL(W)$ or $SL(W)$, then any filtration of $W$ is compatible with $\mathfrak{gl}(W)$ or $\mathfrak{sl}(W)$ with respect to the grading. On the other hand, if $W$ is endowed with a symplectic form or a conformal symplectic form (i.e. a symplectic form defined up to a multiplication by a nonzero constant) and $G=Sp(W)$ or $CSp(W)$, then,  according to \cite[Proposition 2.2]{flag2}, a decreasing filtration  $\{\Lambda_j\}_{j\in\mathbb Z}$ of $W$ is compatible with $\mathfrak {sp}(W)$ or $\mathfrak{csp}(W)$ if and only if, up to a shift in the indices,  $(\mg^{-1}_{-i})^\angle=\mg^{-1}_{i-\nu}$
for some integer $\nu$, where $L^\angle$ denotes the skew-symmetric complement of a subspace $L$ with respect the symplectic form on $W$ ($\nu$ can be taken as $0$ or $1$). Filtration (flags), satisfying this property are called \emph{symplectic filtrations (flags)}. Note that all flags appearing in Examples 3 and 4 are symplectic.} $\Box$
\end{remark}



\subsection{Quasi-principal subbundles of $P^0(\mathfrak m)$}
Now we are ready to introduce the notion of a quasi-principle bundle. Let, as before, $\Delta$ be a distribution with a constant Tanaka symbol
$\mathfrak m=\displaystyle{\bigoplus_{i=-\mu}^{-1}}\mg^i$ and assume that the space $\mg^{-1}$ has an additional filtration
\begin{equation}
\label{filtg1}
\{\mg_{j}^{-1}\}_{j\in \mathbb Z},\quad   \mg_{j}^{-1}\subset \mg_{j-1}^{-1},\quad  \mg_{j}^{-1}\subseteq\mg^{-1}.
\end{equation}

Let the bundle $P^0(\mathfrak m)$ be the bundle defined in subsection \ref{prelim}.
Let $P$ be a fiber subbundle of $P^0(\mathfrak m)$ and
$P(\gamma)$ be the fiber of $P$ over the point $\gamma$.
Take $\vf \in P(\gamma)$.
The tangent space $T_\vf\bigl(P(\gamma)\bigr)$ to the fiber $P(\gamma)$ at
a point $\vf$ can be identified with a subspace of $\mathfrak g^0(\mathfrak m)$.
Indeed, define the following $\mathfrak g^0(\mathfrak m)$-valued $1$-form $\Omega$ on $P$: to any vector $X$ belonging to
$T_\vf\bigl(P(\gamma)\bigr)$ we assign an element $\Omega(\vf)(X)$ of
$\mathfrak g^0(\mathfrak m)$
as follows: if $s\to \vf(s)$ is a smooth curve in $P(\gamma)$ such that $\vf(0)=\vf$ and
$\vf'(0)=X$ then let
\begin{equation}
\label{omega}
\Omega(\vf)(X)=\vf^{-1}\circ X,
\end{equation}
where in the right hand side of the last
formula $\vf$ is considered as an isomorphism between $\mathfrak m$ and $\mathfrak m(\gamma)$ .
Note that the linear map $\Omega(\vf): T_\vf\bigl(P(\gamma)\bigr)\mapsto \mathfrak g^0(\mathfrak m)$ is injective.
Set

\begin{equation}
\label{Lpsi}
L_\vf^0:=\Omega(T_\vf\bigl(P(t)\bigr)
\end{equation}

If $P$ is a principle bundle, which is a reduction of the bundle $P^0(\mathfrak m)$ and $\mg^0\subset \mg^0(\mathfrak m)$ is the Lie algebra of the structure group of the bundle $P$, then the space $L_\vf^0$ is independent of $\vf$ and equal to $\mg^0$. We call this bundle $P$ a \emph{principle bundle of type $(\mathfrak m,\mg^0)$} or a \emph{Tanaka structure of type $(\mathfrak m,\mg^0)$} (as in \cite{aleks}).
For general subbundle $P$  subspaces $L_\vf^0$ may vary from point to point.
From the constructions of the previous subsection, the filtration \eqref{filtg1} induces the filtration on each space $L_\varphi$ defined in \eqref{Lpsi}
and the corresponding graded spaces
${\rm gr}\,L^0_\vf$
can be considered as subspaces of $\gl({\rm gr}\, \mg^{-1})$. In many important applications these subspaces are independent of $\vf$. This motivates the following


\begin{definition}
\label{quasidef}
Assume that $\mg^0$ is a subalgebra of $\mg^0(\mathfrak m)$. A fiber subbundle $P$ of the bundle $P^0(\mathfrak m)$ is called a quasi-principle bundle of type $(\mathfrak m, \mg^0)$ if the following three conditions hold:

\begin{enumerate}
\item The filtration \eqref{filtg} of $\mg^{-1}$ is compatible with the algebra $\mg^0(\mathfrak m)$ with respect to the grading;
\item The subspaces $\gr\, L_\vf^0$ of $\gl({\rm gr}\, \mg^{-1})$ coincide for any $\vf\in P$;
\item There exist an isomorphism $J:\gr \mg^{-1}\mapsto \mg^{-1}$ as in Definition \ref{compatdefin} (with $W=\mg^{-1}$) such that $\mg^0=\{J\circ X\circ J^{-1}: X\in \gr\,  L_\vf^0\}$ for any $\vf\in P$.
    \end{enumerate}
\end{definition}
Note that if the filtration \eqref{filtg} on $\mg^{-1}$ is trivial (i.e. it does not contain any nonzero proper subspace of $\mg^{-1}$) then $P$ is, at least  locally, the principle bundle of type $(\mathfrak m, \mg^0)$ in the Tanaka sense.

\subsection{Quasi-principal bundles associated with flag structures
with constant flag symbol}
\label{quasiflag}
 Our next goal is to show that under some natural assumptions and based on \cite{flag2}, one can assign to a structure $\bigl(\Delta,\{\mathcal Y^\gamma\}_{\gamma\in\mathcal S}\bigr)$ a quasi-principal bundle in a canonical way.

Recall that a group $G^0_\gamma$ (conjugate to $G^0(\mathfrak m)$) acts naturally on each $\Delta(\gamma)$. Let $g^0_\gamma$ be the Lie algebra
of the Lie group $G^0_\gamma$. Assume that the submanifolds $\mathcal Y^\gamma$ satisfy the following additional properties

\begin{enumerate}
\item[{\bf (F1)}] \emph{(transitivity)}
All flags in $\mathcal Y^\gamma$ lie in the same orbit under the natural action of the group $G^0_\gamma$;
\item[{\bf (F2)}]\emph{(compatibility with respect to the grading)}
Any flag in $\mathcal Y^\gamma$ is compatible with $g^0_\gamma$ with respect to the grading.
\end{enumerate}

Then, first we can fix a filtration on the space $\mg ^{-1}$ as in \eqref{filtg} such that it is compatible with $\mg^0(\mathfrak m)$ with respect to the grading. For this it is enough to fix one of the flags $\mathcal Y^\gamma(x)$ in the space $\Delta(\gamma)$ and to identify somehow the space $\mg^{-1}$ and the space $\Delta(\gamma)$ with this fixed flag.

 Further we will assume that the submanifolds of flags $\mathcal Y^\gamma$ are not arbitrary but satisfy a special property called the \emph {compatibility with respect to differentiation} in the terminology of \cite[Section 3]{flag2}.

First describe this property for curves of flags. Given a curve $t\mapsto L(t)$  in a certain Grassmannian of a vector space $W$, denote by $C(L)$ the canonical vector bundle over $L$: the fiber of $C(L)$ over the point $L(t)$ is the vector space $L(t)$. Let $\Gamma(L)$ be the space of all sections of the bundle $C(L)$ . Set
 $L^{(0)}(t):=L(t)$ and  define inductively
$$L^{(j)}(t)={\rm span}\{\frac{d^k}{d\tau^k}\ell(t): \ell\in\Gamma(L), 0\leq k\leq j\}$$ for $j\geq0$.
The space $L^{(j)}(t)$ is called the \emph{$j$th extension} or the \emph{$j$th osculating subspace} of the  curve $L$ at the point $t$.
\medskip

 {\bf (F3)} 
 For any $i\in \mathbb Z$ the first extension (the first osculating subspace) of the curve $x\mapsto \mathcal Y_i^\gamma(x)$
 for any parameter $x_0$ is contained in the space $\mathcal Y^\gamma_{i-1}(x_0)$:

\begin{equation}
\label{compatcurve}
({\mathcal Y}_i^{\gamma})^{(1)}(x_0)\subset \mathcal Y^\gamma_{i-1}(x_0), \quad \forall \text{ parameter } x_0 \text { and  }i\in\mathbb Z.
\end{equation}
This relation exactly means that the curve $\mathcal Y^\gamma=\{\mathcal Y_i^\gamma(x)\}_{i\in \mathbb Z}$ is compatible with respect to differentiation in the sense of \cite{flag2}. If $\mathcal Y^\gamma$ is a submanifold of flags of arbitrary dimension, we assume
that any smooth curve on it is compatible with respect to differentiation as above.
\medskip

Now let $G_+^0(\mathfrak m)$ be the subgroup of $G^0(\mathfrak m)$ consisting of all elements of $G^0(\mathfrak m)$ preserving the filtration \eqref{filtg1}.
The group $ G_+^0(\mathfrak m)$ acts naturally on the space $\bigl({\rm gr} \mg^{0}(\mathfrak m)\bigr)_{-1}$ of all degree $-1$ endomorphisms of the graded space ${\rm gr} \mg^{-1}$ (corresponding to the filtration \eqref{filtg}) belonging to ${\rm gr} \mg^{0}(\mathfrak m)$. This action is in essence the adjoint action, namely $A\in  G_+^0(\mathfrak m)$ sends $x\in\bigl({\rm gr} \mg^{0}(\mathfrak m)\bigr)_{-1}$  to the degree $-1$ component of $({\rm Ad} A)\,x$. This action induces the natural action on the Grassmannians of $({\rm gr} \mg^{0}(\mathfrak m))_{-1}$.

Further, recall that given a Grassmannian of subspaces in a vector space $W$ the tangent space at any point ($=$ a subspace)  $\Lambda$ to this Grassmannian can be naturally identified with the space  ${\rm
 Hom}\bigl(\Lambda, W/\Lambda\bigr)$. Taking into account that the submanifolds of flags $\mathcal Y^\gamma$ satisfies the compatibility with respect to differentiation property (F3), we can conclude from here that the tangent space to the submanifold $\mathcal Y^\gamma$ at a point  $\mathcal Y^\gamma(x)$ can be identified with
 a subspace in the space $$ \displaystyle{\bigoplus_{i\in\mathbb Z} {\rm
 Hom}\bigl(\mathcal Y^\gamma_i(x)/\mathcal Y^\gamma_{i+1}(x),\mathcal Y^\gamma_{i-1}(x) /\mathcal Y^\gamma _i(x)\bigr)}$$ or, in other words,
a subspace in the space of degree $-1$ endomorphisms of the graded space corresponding to the filtration $\mathcal Y^\gamma(x)$.

Moreover, taking into account properties (F1) and (F2) and the construction of the filtration \eqref{filtg}, we can identify the tangent space
 to the submanifold $\mathcal Y^\gamma$ at a point  $\mathcal Y^\gamma(x)$ with a subspace $\delta^\gamma_x$ of the space $\bigl({\rm gr} \mg^{0}(\mathfrak m)\bigr)_{-1}$ of all degree $-1$ endomorphisms of ${\rm gr} \mg^{-1}$ from ${\rm gr} \mg^{0}(\mathfrak m)$, defined up to the aforementioned natural action of the group $G_+^0(\mathfrak m)$.
 This subspace (or, more precisely, the orbit of this subspace with respect to this action) is called the \emph{symbol} of the submanifold $\mathcal Y_\gamma$ at the point $\mathcal Y^\gamma(x)$.

The symbols $\delta^\gamma_x$ play in the geometry of submanifolds of flags the same role as Tanaka symbols in the geometry of distributions.
 Note that the symbol $\delta^\gamma_x$  must be a commutative subalgebra of ${\rm gr} \mg^{0}(\mathfrak m)$ belonging to $\bigl({\rm gr} \mg^{0}(\mathfrak m)\bigr)_{-1}$. The latter condition follows from the involutivity of the tangent bundle to the submanifold $\mathcal Y^\gamma$.

Finally, we need the following property
\medskip

{\bf (F4)}\emph{(constancy of the flag symbol)}  Symbols $\delta^\gamma_x$ of the submanifold of flags $\mathcal Y_\gamma$ at a point $\mathcal Y^\gamma$  are independent of $x$ and $\gamma$ or, more precisely,  lie in the same orbit under the action of the group $G_+^0(\mathfrak m)$ on the corresponding Grassmannian of the space $\bigl({\rm gr} \mg^{0}(\mathfrak m)\bigr)_{-1}$. If $\delta$ is a point in this orbit we will say that the structure $\bigl(\Delta,\{\mathcal Y^\gamma\}_{\gamma\in\mathcal S}\bigr)$ has the \emph{constant flag symbol $\delta$ (and the constant Tanaka symbol $\mathfrak m$).}

\begin{remark}
\label{vinberg}
{\rm Under some natural assumptions the condition of constancy of the flag symbol is not restrictive at least in the case when $\mathcal Y_\gamma$ are curves of flags.
 If $G^0(\mathfrak m)$ is semismple (or, more generally, reductive), as  direct consequence of results  E.B. Vinberg (\cite{vinb})
 the set of all possible symbols of curves of flags 
 is finite.
 Hence, the symbol of a curve of flags with respect to  a semisimple (reductive) group  $G$  is constant in a neighborhood of a generic point.
 Note also that all flag symbols that may appear in Examples 1-4 of section \ref{motsec} are classified in \cite{flag2}, see subsection 7.1 there, corresponding to Examples 1 and 2 , and subsection 7.2 there, corresponding to Examples 3 and 4.
}
$\Box$
\end{remark}


As in Tanaka theory, one can define the notion of the flat submanifold of flags with constant symbol $\delta$. Taking into account the identification between the spaces ${\rm gr}\mg^{-1}$ and $\mg^{-1}$, consider the connected, simply connected subgroup $H(\delta)$ of $G^0(\mathfrak m)$ with the Lie algebra $\delta$. The \emph{flat submanifold with constant symbol $\delta$} is a submanifold equivalent to the orbit of the flag \eqref{filtg1} with respect to the natural action of $H(\delta)$ on the corresponding flag variety.

Again by analogy with the Tanaka theory, one can define the universal algebraic prolongation of the flag symbol $\delta$ (for the description of the algebraic prolongation of a Tanaka symbol see the next subsection and also \cite{tan,yam, zeltan}).
Let $\bigl({\rm gr} \mg^{0}(\mathfrak m)\bigr)_{k}$ be the component of degree $k$ of the space ${\rm gr} \mg^{0}(\mathfrak m)$ or equivalently the endomorphisms of degree $k$ of ${\rm gr} \mg^{-1}$ belonging to ${\rm gr} \mg^{0}(\mathfrak m)$.
Set $\mathfrak u_{-1}^F(\delta):=\delta$ and define by induction in $k$
\begin{equation}
\label{kprolong}
\mathfrak u_k^F(\delta):=\{X\in \bigl({\rm gr} \mg^{0}(\mathfrak m)\bigr)_{k}:[X,Y]\in \mathfrak u_{k-1}^F(\delta),\,Y \in\delta\},\quad
k\geq 0,
\end{equation}
where $\bigl({\rm gr} \mg^{0}(\mathfrak m)\bigr)_{k}$ denotes the space of all degree $k$ endomorphisms of the graded space ${\rm gr} \mg^{-1}$ (corresponding to the filtration \eqref{filtg}) belonging to ${\rm gr} \mg^{0}(\mathfrak m)$.
The space $\mathfrak u_k^F(\delta)$ is called the \emph {$k$th algebraic prolongation of the symbol $\delta$}.
Then by construction
\begin{equation}
\label{AUF}
\mathfrak u^F(\delta)=\displaystyle{\bigoplus_{k\geq -1}\mathfrak u_k^F(\delta)}
\end{equation}
 is a graded
subalgebra of ${\rm gr}\mg^0(\mathfrak m)$. It can be shown that it is \emph{the largest
graded subalgebra of the space ${\rm gr}\mg^0(\mathfrak m)$ such that its component
corresponding to the negative degrees coincides with
$\delta$.}
The algebra $\mathfrak u^F(\delta)$ is called the \emph{universal algebraic prolongation of the flag symbol $\delta$} (of a commutative subalgebra of ${\rm gr} \mg^{0}(\mathfrak m)$ belonging to $\bigl({\rm gr} \mg^{0}(\mathfrak m)\bigr)_{-1}$). As it is shown in \cite{doubkom}, the algebra of infinitesimal symmetries of the flat submanifold with the constant symbol $\delta$ (with respect to the action of the group $G^0(\mathfrak m)$) is isomorphic to $\mathfrak u^F(\delta)$.

Since by the constructions the filtration \eqref{filtg} is compatible with $\mg^0(\mathfrak m)$ with respect to the grading, the subalgebra  $\mathfrak u^F(\delta)$ of ${\rm gr}\mg^0(\mathfrak m)$ is conjugate to a subalgebra of $\mg^0(\mathfrak m)$ (see condition (2) of Definition \ref{compatdefin}). Taking into account this conjugation, from now on we will look on $\mathfrak u^F(\delta)$ as on the subalgebra of $\mg^0(\mathfrak m)$. The procedure for the construction of a canonical bundle of moving frames for a submanifold in a flag variety with a constant symbol is described  in \cite{flag2,flag1}.
As a direct consequence of \cite[Theorem 4.4]{flag2} in the case of curves of flags  and its generalization to submanifolds of flags described in subsection 4.6 there, applied to each submanifold $\mathcal Y^\gamma$,  we obtain the following

\begin{theorem}
\label{quasidelta}
Given a structure $\bigl(\Delta,\{\mathcal Y^\gamma\}_{\gamma\in\mathcal S}\bigr)$ with the constant Tanaka symbol $\mathfrak m$  (of $\Delta$) and with  the constant flag symbol $\delta$ one can assign to it in a canonical way a quasi-principal subbundle of $P^0(\mathfrak m)$ of type $\bigl(\mathfrak m, \mathfrak  u^F(\delta)\bigr)$.
\end{theorem}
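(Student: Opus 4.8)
The plan is to build $P$ fiberwise: for each $\gamma\in\mathcal S$ apply the moving-frame construction of \cite{flag2} to the single submanifold $\mathcal Y^\gamma$ inside the appropriate flag variety of $\Delta(\gamma)$ with respect to the action of $G^0_\gamma$, then assemble the resulting reductions over all of $\mathcal S$, and finally check that the assembled bundle satisfies the three conditions of Definition \ref{quasidef} with $\mg^0=\mathfrak u^F(\delta)$. The point of properties (F1)--(F4) is precisely that they are the hypotheses under which \cite[Theorem 4.4]{flag2} (for curves of flags) and its extension in subsection~4.6 there (for submanifolds of flags of arbitrary dimension) apply: transitivity (F1) lets one fix a model flag and speak of a single $G^0_\gamma$-orbit; compatibility with the grading (F2) makes the filtered-to-graded passage lossless; compatibility with differentiation (F3) is exactly the hypothesis of \cite[Section 3]{flag2}; and constancy of the flag symbol (F4) is what the prolongation in \eqref{kprolong}--\eqref{AUF} requires. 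Granting these, \cite{flag2} produces over each $\mathcal Y^\gamma$ a canonical reduction $P(\gamma)$ of $P^0_+(\mathfrak m)$ of fiber dimension $\dim\mathfrak u^F(\delta)$; since this reduction is natural in the pair $(\mathcal Y^\gamma,G^0_\gamma)$ and $\mathcal Y^\gamma$ depends smoothly on $\gamma$, the union $P=\bigcup_{\gamma}P(\gamma)$ is a smooth fiber subbundle of $P^0(\mathfrak m)$, independent of the auxiliary choices (a coherent parametrization of the curves in the $\mathcal Y^\gamma$, an identification of $\Delta(\gamma)$ with $\mg^{-1}$ fixing the filtration \eqref{filtg}) up to the conjugations already built into Definition \ref{quasidef}; hence $P$ is canonical.

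I would then verify Definition \ref{quasidef}. Condition~(1) is immediate from the way \eqref{filtg} was introduced: one identifies $\mg^{-1}$ with $\Delta(\gamma)$ carrying a chosen model flag $\mathcal Y^\gamma(x_0)$, and (F2) says precisely that this flag is compatible with $g^0_\gamma\cong\mg^0(\mathfrak m)$ with respect to the grading. For condition~(3), once condition~(2) is in hand --- i.e. once $\gr L^0_\vf\subset\gl(\gr\mg^{-1})$ is known to be one fixed subalgebra, independent of $\vf\in P$ --- the isomorphism $J:\gr\mg^{-1}\to\mg^{-1}$ furnished by condition~(1) conjugates it into a subalgebra of $\mg^0(\mathfrak m)$; under the identification of $\mathfrak u^F(\delta)$ with a subalgebra of $\mg^0(\mathfrak m)$ adopted just before the theorem, this conjugate is exactly $\mathfrak u^F(\delta)$, which is the assertion that $P$ has type $(\mathfrak m,\mathfrak u^F(\delta))$.

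Thus everything reduces to condition~(2), and this is where the substance of \cite{flag2} enters. The claim is that the associated graded of $T_\vf\bigl(P(\gamma)\bigr)$ --- transported by the form $\Omega$ of \eqref{omega} into $\mg^0(\mathfrak m)$ and then filtered by \eqref{filtg} --- is always the graded subalgebra $\mathfrak u^F(\delta)=\delta\oplus\bigoplus_{k\ge0}\mathfrak u_k^F(\delta)$ of $\gr\mg^0(\mathfrak m)$. The degree $-1$ piece $\delta$ is accounted for by (a complement to the vertical directions of) the projection $P(\gamma)\to\mathcal Y^\gamma$, which $\Omega$ identifies after grading with the tangent space to $\mathcal Y^\gamma$, i.e. with the flag symbol; the pieces of degree $k\ge0$ are cut out one degree at a time by the normalization procedure of \cite{flag2}, where at step $k$ the tangent space to the already-reduced fiber is the kernel of a linear map whose associated graded imposes exactly the bracket conditions $[X,Y]\in\mathfrak u_{k-1}^F(\delta)$, $Y\in\delta$, of \eqref{kprolong}. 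The main obstacle is the degree bookkeeping needed to make this precise: one must check that the filtration \eqref{filtg} induces on $L^0_\vf$ via $\Omega$ the same filtration that \cite{flag2} works with, and that each reduction step there takes place inside a filtered subspace whose associated graded is stable --- so that although $L^0_\vf$ itself moves with $\vf$, its graded does not. Tracking degrees through the construction of \cite{flag2} is routine but delicate, and I expect it to be the only genuinely technical point of the argument.
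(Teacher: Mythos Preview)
Your proposal is correct and follows essentially the same approach as the paper: the paper simply states that the theorem is ``a direct consequence of \cite[Theorem~4.4]{flag2} in the case of curves of flags and its generalization to submanifolds of flags described in subsection~4.6 there, applied to each submanifold $\mathcal Y^\gamma$,'' with no further argument. Your proposal unpacks exactly this citation --- applying the moving-frame construction of \cite{flag2} fiberwise, then verifying the three conditions of Definition~\ref{quasidef} --- and in fact supplies more detail on the verification of condition~(2) than the paper itself does.
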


Note that the assignment in Theorem \ref{quasidelta} is uniquely determined by a choice  of a so-called normalization condition for the structure equation of the moving frames associated with the submanifolds $\mathcal Y^\gamma$.
In the case of curves the normalization conditions are given  by a subspace  $W$  complementary to the subspace $\mathfrak u^F_+(\delta)+[\delta, \mathfrak g^0_+(\mathfrak m)]$ in $\mathfrak g^0_+(\mathfrak m) $ , where $\mg^0_+(\mathfrak m)$ is the Lie algebra of the group $G^0_+(\mathfrak m) $ and $\mathfrak u^F_+(\delta)= u(\delta)\cap \mathfrak g^0_+(\mathfrak m) $. Similar description of normalization conditions can be given in the case of submanifolds of flags \cite{flag2, flag1}.

The important point is that as a rule the resulting quasi-principal subbundle $P$ of $P^0(\mathfrak m)$ in Theorem \ref{quasidelta} is not a principle bundle of type
$\bigl(\mathfrak m, \mathfrak  u^F(\delta)\bigr)$.
For example,  if the subgroup of $G_\gamma^0$ preserving the submanifold $\mathcal Y^\gamma$, i.e. the group of symmetries of  $\mathcal Y^\gamma$ from $G_\gamma^0$, does not act transitively on $\mathcal Y^\gamma$, then $P$ is not a principle bundle of type
$\bigl(\mathfrak m, \mathfrak  u^F(\delta)\bigr)$.

\begin{remark} {(On nice flag symbols.)}
\label{nice}
{\rm Let $P$ denote  the resulting quasi-principal subbundle of Theorem \ref{quasidelta}.
One can distinguish a special class of flag symbols for which the normalization conditions for the construction of the bundle $P$ can be chosen such that $P$ has a nicer property at least on the level of the submanifolds of flags. More precisely,
assign to our flag structure $\bigl(\Delta,\{\mathcal Y^\gamma\}_{\gamma\in\mathcal S}\bigr)$ the bundle $\widetilde {\mathcal S}$ over $\mathcal S$ such that the fiber over $\gamma \in\mathcal S$ consists of the points of the curve $\mathcal Y^\gamma$. Let us call this bundle the \emph{tautological bundle over $\mathcal S$}. Then  from the same  \cite[Theorem 4.4]{flag2} it follows that $P$ can be considered as the bundle over $\widetilde{\mathcal S}$ as well.

However, even the bundle $P\rightarrow\widetilde{\mathcal S}$ can not always be chosen as a principle bundle.
To formulate the precise conditions for the latter, let $U^F_+(\delta)$ be the subgroup of the group of symmetries  of the flat submanifold with the constant symbol $\delta$
(with respect to the action of the group $G^0(\mathfrak m)$), which preserves the filtration \eqref{filtg1}. The bundle  $P\rightarrow \widetilde {\mathcal S}$ is the principal bundle if and if the complementary subspace $W$ defining the normalization condition for the construction of $P$ is invariant with respect to the adjoint action of $U^F_+(\delta)$. In the latter case $U^F_+(\delta)$ is the structure group of this bundle.
If such complementary space $W$ can be chosen then the symbol $\delta$ and the normalization condition $W$ will be called \emph{nice}. The fact that not any symbol of submanifolds of flags is nice is the main reason why we need to introduce the quasi-principle bundles and to develop the prolongation procedure for them.

For example, the symbol of curve of flags obtained after the linearization procedure for scalar ordinary equations (Example 1 of section 3) and symplectification procedure  for rank $2$ distributions (a particular case of Example 3 of section 3) are nice. In both cases the curves of flags are generated by osculation from certain curves in projective spaces. However, this is not in general the case for systems of ODEs of mixed order and distributions of rank greater than $2$. See \cite{flag1} for the proof of non-existence of nice (in the above sense) normalization conditions for curves of flags appearing in ODEs of mixed order $(3,2)$ up to a 0-equivalence (in the sense of Example 2 of section 3 below). Similarly, it can be shown that a nice normalization conditions does not exist in the case of rank $3$ distributions in $\mathbb R^7$, having a 6 dimensional square. The questions on the classification of flag symbols, for which  a nice normalization condition can be chosen, remains open.} $\Box$
\end{remark}

\subsection{Prolongation of quasi-principal frame bundles}
\label{quasiprosec}
Now we return to general quasi-principle bundles.
In \cite{tan} for any principle bundle of type $(\mathfrak m, \mg^0)$ Tanaka described the prolongation procedure for the construction of the canonical frame (the structure of an absolute parallelism) by means of the so-called \emph {universal algebraic prolongation of the pair $ (\mathfrak m, \mg^0)$}. One of the main goals of the present paper is to generalize this prolongation procedure to quasi-principle bundles of type $(\mathfrak m, \mg^0)$.

Now let us define the algebraic prolongation of the pair $(\mathfrak m, \mg^0)$ and formulate our main result.
First note that  the subspace
$\mathfrak m\oplus \mg^0$ is endowed with the natural structure of a graded Lie algebra. For this we only need to define
brackets $[f,v]$
for $f\in \mg^0$ and $v\in \mathfrak m$, because $\mathfrak m$ and $\mg^0$ are already Lie algebras.
Set
\begin{equation}
\label{br1}
[f,v]:= f(v).
\end{equation}
Since $\mg^0$ is a subalgebra of the algebra of the derivations of $\mathfrak m$ preserving the grading, such operation indeed defines the structure of the graded Lie algebra on $\mathfrak m\oplus \mg^0$.
Informally speaking, the Tanaka universal algebraic prolongation of the pair $(\mathfrak m, \mg^0)$
is the maximal (nondegenerate) graded Lie algebra, containing the graded Lie algebra
$\displaystyle{\bigoplus_{i\leq 0}\mg^i}$ as its non-positive part. More precisely, Tanaka introduces the following
\begin{definition}
\label{univdef}
The graded Lie algebra $\mathfrak u(\mathfrak m, \mg^0)= \displaystyle{\bigoplus_{i\in\mathbb Z}\mathfrak u^i(\mathfrak m,\mg^0)}$, satisfying the following three conditions:
\begin{enumerate}
\item $\mathfrak u^i(\mathfrak m,\mg^0)=\mg^i$ for all $i\leq 0$;
\item if $X\in \mathfrak u^i(\mathfrak m,\mg^0)$ with $i>0$ satisfies $[X, \mg^{-1}]=0$, then $X=0$;
\item $\mathfrak u(\mathfrak m, \mg^0)$ is the maximal graded Lie algebra, satisfying Properties 1 and 2.
\end{enumerate}
is called  the \emph{universal algebraic prolongation}  of the graded Lie algebra $\mathfrak m\oplus \mg^0$ or of the pair
$(\mathfrak m,\mg^0)$.
\end{definition}
The algebra $\mathfrak u(\mathfrak m, \mg^0)$  can be explicitly realized as follows.
Define the \emph{$k$-th algebraic prolongation $\mg^k$ of the Lie algebra $\mathfrak m\oplus \mg^0$} by induction for any $k\geq 0$. Assume that spaces $\mg^l\subset \displaystyle{\bigoplus_{i<0}{\rm Hom}(\mg^i,\mg^{i+l})}$ are defined  for
all $0\leq l<k$. Set

\begin{equation}
\label{br2}
[f,v]=-[v, f]=f(v) \quad \forall f\in \mg^l, 0\leq l<k, \text{ and }v\in\mathfrak m.
\end{equation}
 Then let
\begin{equation}
\label{mgk}
\mg^k\stackrel{\text{def}}{=}\left\{f\in \bigoplus_{i<0}{\rm Hom}(\mg^i,\mg^{i+k}): f ([v_1,v_2])=[f (v_1),v_2]+[v_1, f( v_2)]\,\,\forall\, v_1, v_2 \in \mathfrak m\right\}.
\end{equation}
Directly from this definition and the fact that $\mathfrak m$ is fundamental (that is, it is generated by $\mg^{-1}$) it follows that if
$f\in \mg^k$ satisfies $f|_{\mg^{-1}}=0$, then $f=0$.
The space $\bigoplus_{i\in Z} \mg^i$ can be naturally endowed with the structure of a graded Lie algebra.
The brackets of two elements from $\mathfrak m$ are as in $\mathfrak m$. The brackets of an element with non-negative weight and an element from $\mathfrak m$ are already defined by  \eqref{br2}.
It only remains to define  the brackets $[f_1,f_2]$ for $f_1\in\mg^k$, $f_2\in \mg^l$ with $k,l\geq 0$.
The definition is inductive with respect to $k$ and $l$: if $k=l=0$ then the bracket $[f_1,f_2]$ is as in $\mg^0$. Assume that $[f_1,f_2]$
is defined for all $f_1\in\mg^k$, $f_2\in \mg^l$ such that a pair  $(k,l)$ belongs to  the set
\begin{equation*}
\{(k,l):0 \leq k\leq \bar k, 0\leq l\leq \bar l\}\backslash \{(\bar k,\bar l)\}.
\end{equation*}
Then define $[f_1, f_2]$  for $f_1\in\mg^{\bar k}$, $f_2\in \mg^{\bar l}$ to be the element of $\displaystyle{\bigoplus_{i<0}{\rm Hom}(\mg^i,\mg^{i+\bar k+\bar l})}$ given by
\begin{equation}
\label{posbr}
[f_1,f_2]v\stackrel{\text{def}}{=} [f_1(v), f_2]+[f_1,f_2(v)]\quad \forall v\in\mathfrak m.
\end{equation}
It is easy to see that $[f_1,f_2]\in \mg^{k+l}$ and that $\bigoplus_{i\in Z} \mg^i$ with bracket product defined as above is a graded Lie algebra. As a matter of fact (\cite{tan} \S 5) this graded Lie algebra satisfies Properties 1-3 of Definition \ref{univdef} . That is it is a realization of the universal algebraic prolongation $\mg(\mathfrak m, \mg^0)$ of the algebra $\mathfrak m\oplus\mg^0$.
In particular $\mathfrak u^i(\mathfrak m,\mg^0)\cong\mg^i$.

It turns out (\cite{tan} \S 6, \cite{yam} \S 2) that  $\mathfrak u(\mathfrak m, \mg^0)$ is closely related to the Lie algebra of infinitesimal symmetries of the so-called \emph{flat principle bundle of type $(\mathfrak m, \mg^0)$} . To define this object  let, as before, $D_{\mathfrak m}$ be the flat distribution of the constant type $\mathfrak m$, i.e. the
 left invariant distribution on $M(\mathfrak{m})$ such that $D_{\mathfrak m}(e)=\mg^{-1}$. Let $G^0$ be the simply-connect Lie group with the Lie algebra $G^0$. Denote by $L_x$ the left translation on $M(\mathfrak m)$ by an element $x$. Finally, let $P^0(\mathfrak m, \mathfrak g^0)$
be the set of all pairs $(x,\vf)$,
where  $x\in M(\mathfrak m)$ and  $\vf:\mathfrak{m}\to\mathfrak m(x)$ is an isomorphism of the graded Lie algebras $\mathfrak {m}$ and $\mathfrak m(x)$ such that
$(L_{x^{-1}})_*\vf\in G^0$. The bundle $P^0(\mathfrak m, \mathfrak g^0)$ is called \emph{the flat principal bundle of constant type $(\mathfrak m, \mg^0)$}.
If $\dim \mathfrak u(\mathfrak m, \mg^0)$ is finite (which is equivalent to the existence of $l>0$ such that $\mathfrak u^l(\mathfrak m,\mg^0)=0$), then the algebra of infinitesimal symmetries is  isomorphic to $\mathfrak u(\mathfrak m, \mg^0)$. The analogous formulation in the case  when
$\mathfrak u(\mathfrak m, \mg^0)$ is infinite dimensional may be found in \cite{tan} \S 6.

One of the main results of Tanaka theory \cite{tan} can be formulated as follows:
\begin{theorem}
\label{tantheor}
Assume that the universal algebraic prolongation $\mathfrak u(\mathfrak m, \mg^0)$ of the pair $(\mathfrak m, \mg^0)$ is finite dimensional and $l\geq 0$ is the maximal integer such that $\mathfrak u^l(\mathfrak m, \mg^0)\neq 0$.
To any principle bundle $P^0$ of type $(\mathfrak m, \mg^0)$ one can assign, in a canonical way,
a sequence of bundles $\{P^i\}_{i=0}^l$ such that  $P^i$ is an affine bundle over $P^{i-1}$ with fibers being affine spaces over the linear space $\mathfrak u^i(\mathfrak m,\mg^0)$ $(\cong \mg^i)$ for any $i=1,\dots l$ and such that $P^l$ is endowed with the canonical frame.
\end{theorem}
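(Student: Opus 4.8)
The plan is to follow Tanaka's inductive prolongation \cite{tan}, building the tower $P^0\leftarrow P^1\leftarrow\dots\leftarrow P^l$ one degree at a time; this argument is also the template for the proof of the main Theorem~\ref{maintheor}, where the principal bundle $P^0$ gets replaced by a quasi-principal one. The data carried along at level $k$ are: (i) a canonical filtration $\{\mathcal T^j(P^k)\}_{-\mu\le j\le k}$ of $TP^k$, with $\mathcal T^k(P^k)$ the vertical subbundle of $P^k\to P^{k-1}$ (at $k=0$, of $P^0\to\mathcal S$) and $\mathcal T^j(P^k)$ for $j<k$ the pull-back of $\mathcal T^j(P^{k-1})$; and (ii) a tautological ``partial coframe'', a family of $\mg^j$-valued $1$-forms $\omega^j$, $-\mu\le j\le k$, where for $j<k$ the form $\omega^j$ is intrinsically defined only modulo $\mathcal T^{j+1}(P^k)$ (i.e. as a map on $\gr TP^k$ truncated at degree $k$) while $\omega^k$ is a genuine $\mg^k$-valued form on $\mathcal T^k(P^k)$. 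At the bottom, $P^0\to\mathcal S$ carries the obvious such data: $\mathcal T^j(P^0)=(d\pi)^{-1}(\Delta^j)$ for $j<0$, $\mathcal T^0(P^0)$ the vertical bundle, $\omega^j$ for $j<0$ read off from $\varphi^{-1}\colon\gr T_\gamma\mathcal S\to\mathfrak m$, and $\omega^0$ the canonical $\mg^0$-valued vertical parallelism $\Omega$ of \eqref{omega}.

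I would then carry out the prolongation step as follows. Given $P^k$ with this data, declare a point of $P^{k+1}$ over $y\in P^k$ to be a linear ``coframe candidate'' $\ell\colon\mathfrak m\to T_yP^k$ that splits the filtration $\{\mathcal T^j(P^k)\}$ compatibly with $\bigoplus_{j\le k}\omega^j$, and whose structure function $c(\ell)$ — the top-degree part of the failure of the Maurer--Cartan equation for the forms $\omega^j$ extended by $\ell$, computed from $d\omega^{k-1}$, $d\omega^k$ and the brackets \eqref{br1}--\eqref{br2} — lies in a fixed complement $N_{k+1}$ to the image of the Tanaka--Spencer coboundary $\partial$ in degree $k+1$ (acting on cochains of $\mathfrak m$ with values in $\bigoplus_{j\le k}\mg^j$). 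The three things to verify, exactly as in \cite{tan}, are: the set of all candidates $\ell$ over a fixed $y$ (before normalization) is an affine space over the degree-$(k+1)$ homomorphisms $\bigoplus_{i<0}{\rm Hom}(\mg^i,\mg^{i+k+1})$; under $\ell\mapsto\ell+\xi$ with such $\xi$ the structure function changes by $\partial\xi$, so the normalization $c(\ell)\in N_{k+1}$ is attainable and cuts the fibre down to an affine space over $\ker\partial$; and $\ker\partial$ is, by the very definition \eqref{mgk}--\eqref{kprolong} of the algebraic prolongation, precisely $\mg^{k+1}=\mathfrak u^{k+1}(\mathfrak m,\mg^0)$. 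Thus $P^{k+1}\to P^k$ is an affine bundle with fibre modelled on $\mathfrak u^{k+1}(\mathfrak m,\mg^0)$; the tautological $\ell$ on $P^{k+1}$ promotes the pulled-back $\omega^k$ to a genuine form on $\mathcal T^k(P^{k+1})$ and supplies a new top form $\omega^{k+1}$, reconstituting the inductive data at level $k+1$. Since $\mathfrak u^{l+1}(\mathfrak m,\mg^0)=0$, at level $l$ the candidate $\ell$ is uniquely determined, all filtrations have been split, and the soldering forms assemble into a single $\mathfrak u(\mathfrak m,\mg^0)$-valued $1$-form that is a pointwise linear isomorphism $T_yP^l\to\mathfrak u(\mathfrak m,\mg^0)$ — the required canonical frame (with the dimension check $\dim P^l=\sum_{j=-\mu}^l\dim\mg^j=\dim\mathfrak u(\mathfrak m,\mg^0)$).

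The hard part will be the bookkeeping around the structure functions: one must compute the exterior derivatives $d\omega^j$ on $P^k$, extract from them the well-defined ``torsion'' at each new degree, and establish the Bianchi-type identity that this torsion transforms by a Spencer coboundary under a change of the lift $\ell$ — this is what makes the normalization both possible and rigid, with residual freedom exactly $\ker\partial=\mathfrak u^{k+1}(\mathfrak m,\mg^0)$. One also has to check that the filtrations, the forms $\omega^j$, the coboundary $\partial$, and a structure-group-compatible choice of the complements $N_{k+1}$ are equivariant under $G^0$ and its prolongations, so that the construction is independent of local trivializations and globalizes to a bundle over $\mathcal S$. Everything purely algebraic — well-definedness of the graded brackets \eqref{br1}--\eqref{posbr}, fundamentality of $\mathfrak m$, and the identification $\ker\partial=\mathfrak u^{k+1}(\mathfrak m,\mg^0)$ — is already in place from the material recorded above; the geometric differentiations and the equivariance are the substance of the proof. (For Theorem~\ref{maintheor} the extra subtlety, absent here, is that the degree-$0$ datum $L^0_\varphi$ varies with $\varphi$ and is only ``graded-constant'', so the very first step must be run with the quasi-principal structure rather than with a fixed $\mg^0$.)
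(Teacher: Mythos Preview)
Your outline is essentially Tanaka's prolongation scheme as recorded in \cite{tan,zeltan}, and it matches the template the paper uses in sections \ref{firstprolongsec}--\ref{highprolongsec} (specialized to the principal case $L^0_\vf\equiv\mg^0$): the filtration $\{\mathcal T^j(P^k)\}$ is the paper's $\{\Delta^j_k\}$, your partial coframe is the soldering tuple $\Omega_k=\{\omega_k^i\}$, the ``coframe candidates'' are the tuples $\mathcal H_{k+1}$ of complements, the structure function and its transformation law are \eqref{structTk}--\eqref{structransk}, and the identification $\ker\partial=\mg^{k+1}$ is exactly Remark \ref{kergrk}.

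Two points of bookkeeping deserve tightening. First, at step $k$ the domain of the Spencer operator is not just $\bigoplus_{i<0}{\rm Hom}(\mg^i,\mg^{i+k+1})$ but also includes $\bigoplus_{i=0}^{k-1}{\rm Hom}(\mg^i,\mg^k)$, reflecting the freedom in lifting the previously constructed vertical directions (your $\ell$ should have domain $\mathfrak m\oplus\mg^0\oplus\dots\oplus\mg^{k-1}$, not just $\mathfrak m$); the kernel still lands in the negative part by \eqref{kerk}, so the fibre description is unaffected, but the transformation law \eqref{Spk} uses the full domain. Second, the sentence ``at level $l$ \dots all filtrations have been split'' is too quick: when $\mg^{l+1}=0$ the fibres become points, but a point of $P^{l+1}$ is still only a map into quotients $\Delta_l^i/\Delta_l^{i+l+2}$; one must run $\mu$ further (trivial-fibre) steps until $i+l+1>l$ for all $i\ge-\mu$, obtaining a genuine frame on $P^{l+\mu-1}$ and then identifying all $P^j$ with $j\ge l$ via the (now diffeomorphic) projections --- this is the final paragraph of section \ref{highprolongsec}.
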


Once a canonical frame is constructed, the equivalence problem for the principle bundles of type $(\mathfrak m, \mg^0)$ is in essence solved. Moreover,  $\dim \mathfrak u(\mathfrak m, \mg^0)$ gives the sharp upper bound for the dimension of the algebra of infinitesimal symmetries of such
structures.

The main results of the present paper is the following
\begin{theorem}
\label{maintheor}
Assume that the universal algebraic prolongation $\mathfrak u(\mathfrak m, \mg^0)$ of the pair $(\mathfrak m, \mg^0)$ is finite dimensional and $l\geq 0$ is the maximal integer such that $\mathfrak u^l(\mathfrak m, \mg^0)\neq 0$.
To any quasi-principle bundle $P^0$ of type $(\mathfrak m, \mg^0)$ one can assign, in a canonical way,
a sequence of bundles $\{P^i\}_{i=0}^l$ such that  $P^i$ is an affine bundle over $P^{i-1}$ with fibers being affine spaces over the linear space of dimension equal to $\dim \,\mathfrak u^i(\mathfrak m,\mg^0)$ $(=\dim\mg^i)$ for any $i=1,\dots l$ and such that $P^l$ is endowed with the canonical frame.
\end{theorem}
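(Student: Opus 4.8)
The plan is to construct the tower $\{P^i\}_{i=0}^l$ by induction on $i$, imitating the prolongation scheme behind Theorem \ref{tantheor}, but carrying along on each $P^i$ a \emph{filtration} of its tangent bundle whose associated graded bundle is the \emph{constant} graded vector space $\bigoplus_{j\le i}\mathfrak u^j(\mathfrak m,\mg^0)$, rather than a literal trivialization. The guiding principle is that Tanaka's prolongation is governed entirely by the graded Lie algebra $\mathfrak u(\mathfrak m,\mg^0)$ --- i.e.\ only by the \emph{symbols} (associated graded objects) of the structures involved --- so it is enough that these symbols be constant along the bundle, which is exactly what it means for $P^0$ to be quasi-principal of type $(\mathfrak m,\mg^0)$, even though the underlying non-graded objects (the subspaces $L^0_\vf\subset\mg^0(\mathfrak m)$) genuinely vary with $\vf$.

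The base step is the new ingredient and is carried out in Section \ref{firstprolongsec}. One equips $TP^0$ with the filtration whose negative part is the pullback, under $P^0\to\mathcal S$, of the weak derived flag of $\Delta$, and whose non-negative part is the filtration induced on the vertical subbundle by the filtration \eqref{filtg} of $\mg^{-1}$ (via the $\mg^0(\mathfrak m)$-valued form $\Omega$ of \eqref{omega} together with the induced filtration \eqref{grgldef} of $\mg^0(\mathfrak m)$). Using conditions (1)--(3) of Definition \ref{quasidef} one checks that the associated graded of this filtered bundle is canonically the constant graded space $\mathfrak m\oplus\mg^0$; condition (3), i.e.\ the isomorphism $J$, is what lets one realise $\gr L^0_\vf$ as the fixed subalgebra $\mg^0$ inside $\mg^0(\mathfrak m)$. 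The delicate point is that the degree-$0$ (vertical) part of the tautological $1$-form is a priori only $\mg^0(\mathfrak m)$-valued, not $\mg^0$-valued, so one must verify that modulo the filtration its symbol lands in $\mg^0$ and that the structure equations on $P^0$ hold at the level of associated graded objects. Granting this, $P^1\to P^0$ is defined as the bundle of coframes of $T_\vf P^0$ that are compatible with the filtration, induce the canonical identification on the associated graded, and satisfy a fixed normalization condition in degree $1$ (a chosen complement to the relevant space of coboundaries); its fibers are then affine spaces over a vector space of dimension $\dim\mathfrak u^1(\mathfrak m,\mg^0)$, the constancy of the symbol being what makes the usual algebraic computation of the prolongation space (as in \eqref{mgk}) go through.

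The inductive higher step, carried out in Section \ref{highprolongsec}, follows Tanaka's argument essentially verbatim. Assuming $P^k$ has been built together with its filtration and its tautological forms, with values in $\bigoplus_{j\le k}\mathfrak u^j(\mathfrak m,\mg^0)$ and satisfying the prescribed structure equations modulo lower weights and modulo the chosen normalizations, one defines $P^{k+1}$ as the bundle of coframes on $T_\vf P^k$ obtained by adjoining a new family of degree $k+1$ vertical forms, adapted to the filtration and normalized by a fixed complement $\mathcal N^{k+1}$ to the appropriate coboundaries; the fibers of $P^{k+1}\to P^k$ are affine over a vector space of dimension $\dim\mathfrak u^{k+1}(\mathfrak m,\mg^0)$, and one verifies that the enlarged tautological forms again satisfy the structure equations required at level $k+1$. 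At this stage the varying-fiber phenomenon no longer intervenes: for $k\ge 1$ the fibers are honest affine spaces over fixed vector spaces, the quasi-principality having been completely absorbed in the base step. Since $\mathfrak u(\mathfrak m,\mg^0)$ is finite dimensional with $\mathfrak u^{l+1}(\mathfrak m,\mg^0)=0$, there are no vertical directions left to adjoin at level $l$, so the tautological forms on $P^l$ already form an absolute parallelism and $\dim P^l=\dim\mathfrak u(\mathfrak m,\mg^0)$.

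Canonicity is then automatic: all choices made along the way --- the isomorphism $J$ and the normalization complements $\mathcal N^1,\dots,\mathcal N^l$ --- are fixed once and for all in purely algebraic terms depending only on the pair $(\mathfrak m,\mg^0)$ and the filtration \eqref{filtg}, so the assignment of $(P^l,\text{canonical frame})$ to $P^0$ commutes with isomorphisms of quasi-principal bundles of type $(\mathfrak m,\mg^0)$. I expect the first prolongation to be the main obstacle: showing that the filtered structure on $TP^0$ has a well-defined constant symbol $\mathfrak m\oplus\mg^0$ despite the $\vf$-dependence of $L^0_\vf$, and that the reduction defining $P^1$ is consistent, is precisely the step where the passage from ``principal'' to ``quasi-principal'' has to be made; once the construction has entered the Tanaka-style tower at level $1$, the remaining steps are, mutatis mutandis, those of \cite{tan}.
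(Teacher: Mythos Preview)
Your overall architecture is right --- the proof is indeed an induction that mirrors Tanaka's tower, with the filtration on $\mg^{-1}$ doing the work of controlling the varying subspaces $L^0_\vf$. But your claim that ``the varying-fiber phenomenon no longer intervenes'' for $k\ge 1$, and that the higher steps follow Tanaka ``essentially verbatim'', is where you go wrong. In the paper the quasi-principality propagates through \emph{every} level: the generalized Spencer operator $\partial_k$ depends on the point $\vf_k\in P^k$ (because its definition uses the identifications ${\rm Id}^l_{\vf}$ between the varying spaces $L^l_{\vf}$ and the fixed models $L^l$), so the preimage $L^{k+1}_{\vf_k}=(\partial_k)^{-1}(\mathcal N_k)$ is again a subspace that genuinely varies with $\vf_k$. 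What stays constant is only its associated graded $\gr L^{k+1}_{\vf_k}\cong\mg^{k+1}$. Thus $P^{k+1}$ is again only quasi-principal in the same sense as $P^0$, and you cannot simply invoke Tanaka.

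The technical device you are missing is twofold. First, a general lemma on filtered linear maps (Lemma~\ref{lem_abc} in the paper): if $\Upsilon\colon A\to B$ preserves filtrations and $C\subset B$ satisfies $\gr C\oplus\im\gr\Upsilon=\gr B$, then $C+\im\Upsilon=B$ and, crucially, $\gr\Upsilon^{-1}(C)=\ker\gr\Upsilon$ independently of $C$. This is what lets you choose a single normalization $\mathcal N_k$ at the graded level and still conclude that all the pointwise preimages $(\partial_k)^{-1}(\mathcal N_k)$ have the same graded piece $\ker\gr\partial_k\cong\mg^{k+1}$, even though $\partial_k$ itself varies. Second, at each step you need an additional choice of an \emph{identifying space} $\mathcal M_k$ (a complement whose graded is complementary to $\gr L^k_\vf$) to produce the fixed model space $L^k$ and the isomorphisms ${\rm Id}^k_\vf$; your list of ``choices made along the way'' omits these entirely. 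So the induction hypothesis has to carry both the filtered structure and these identifying data, and the filtered-versus-graded bookkeeping must be redone at every level, not just the first.
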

Note that in both theorems above the construction of the sequence of bundles $P^i$ depends on the choice of the so-called
normalization conditions on each step of the prolongation procedure, while in Theorem \ref{maintheor} it also depends on the choice of the so-called identifying spaces on each step of the prolongation procedure. The precise meaning of these notions
will be given in the course of the proof of Theorem \ref{maintheor} in sections \ref{firstprolongsec} and \ref{highprolongsec}.
This proof
is a modification of the proof of Theorem \ref{tantheor} given in the second author's lecture notes \cite{zeltan}.

As a direct consequence of Theorems \ref{quasidelta} and \ref{maintheor} we get the following

\begin{theorem}
\label{cor1}
Given a flag structure $\bigl(\Delta,\{\mathcal Y^\gamma\}_{\gamma\in\mathcal S}\bigr)$ with the constant Tanaka symbol $\mathfrak m$  (of $\Delta$) and with  the constant flag symbol $\delta$  such that  the universal algebraic prolongation $\mathfrak u\bigl(\mathfrak m, \mathfrak u^F(\delta)\bigr)$ of the pair $(\mathfrak m, \mathfrak u^F(\delta))$ is finite dimensional, one can assign to it a bundle over $\mathcal S$ of the dimension  $\dim\,\mathfrak u\bigl(\mathfrak m, \mathfrak u^F(\delta)\bigr)$ endowed with the canonical frame.
\end{theorem}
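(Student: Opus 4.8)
The plan is to reduce the statement to the already-established Theorems \ref{quasidelta} and \ref{maintheor} and verify that their hypotheses are met. First I would invoke Theorem \ref{quasidelta}: under the standing assumptions on the flag structure $\bigl(\Delta,\{\mathcal Y^\gamma\}_{\gamma\in\mathcal S}\bigr)$ — namely properties (F1)--(F4), so that $\Delta$ has constant Tanaka symbol $\mathfrak m$ and the family $\{\mathcal Y^\gamma\}$ has constant flag symbol $\delta$ — one obtains canonically a quasi-principal subbundle $P^0$ of $P^0(\mathfrak m)$ of type $\bigl(\mathfrak m,\mathfrak u^F(\delta)\bigr)$. This uses the results of \cite{flag2,flag1} applied fiberwise to each $\mathcal Y^\gamma$ together with the smooth dependence on $\gamma$; the relevant output is exactly a bundle over $\mathcal S$ whose associated graded tangent-to-fiber spaces are the fixed subalgebra $\mathfrak u^F(\delta)\subset\mathfrak g^0(\mathfrak m)$ (after the conjugation by the isomorphism $J$ of condition (3) of Definition \ref{quasidef}). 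I would remark that this assignment depends on a choice of normalization condition as described after Theorem \ref{quasidelta}, and fix one such choice.

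Next I would apply Theorem \ref{maintheor} with $\mg^0:=\mathfrak u^F(\delta)$. The hypothesis required there is precisely that the universal algebraic prolongation $\mathfrak u\bigl(\mathfrak m,\mathfrak u^F(\delta)\bigr)$ of the pair $\bigl(\mathfrak m,\mathfrak u^F(\delta)\bigr)$ be finite-dimensional, which is the hypothesis of Theorem \ref{cor1}. Letting $l\geq 0$ be the largest integer with $\mathfrak u^l\bigl(\mathfrak m,\mathfrak u^F(\delta)\bigr)\neq 0$, Theorem \ref{maintheor} produces a tower $P^0\leftarrow P^1\leftarrow\cdots\leftarrow P^l$ of bundles, each $P^i\to P^{i-1}$ an affine bundle modeled on a linear space of dimension $\dim\mathfrak u^i\bigl(\mathfrak m,\mathfrak u^F(\delta)\bigr)$, with $P^l$ carrying a canonical frame (absolute parallelism). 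Composing the projections, $P^l$ is a bundle over $\mathcal S$.

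It then remains to count dimensions. The fiber of $P^l\to\mathcal S$ has dimension equal to the dimension of the fiber of $P^0\to\mathcal S$ plus $\sum_{i=1}^{l}\dim\mathfrak u^i\bigl(\mathfrak m,\mathfrak u^F(\delta)\bigr)$; by Theorem \ref{quasidelta} the fiber of $P^0\to\mathcal S$ has dimension $\dim\mathfrak u^F(\delta)=\sum_{i\geq -1}\dim\mathfrak u_i^F(\delta)$, and by the explicit realization of the Tanaka prolongation (Definition \ref{univdef} and the construction of the $\mg^i$) one has $\mathfrak u^i\bigl(\mathfrak m,\mg^0\bigr)=\mg^i$ for $i\leq 0$, so that the non-positive part of $\mathfrak u\bigl(\mathfrak m,\mathfrak u^F(\delta)\bigr)$ accounts exactly for $\dim\mathfrak m+\dim\mathfrak u^F(\delta)$. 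Since $\dim\mathcal S=\dim\mathfrak m$, the total dimension of $P^l$ is $\dim\mathfrak m+\sum_{i\geq 0}\dim\mathfrak u^i\bigl(\mathfrak m,\mathfrak u^F(\delta)\bigr)=\dim\mathfrak u\bigl(\mathfrak m,\mathfrak u^F(\delta)\bigr)$, as claimed, and $P^l$ carries the canonical frame. Thus $P^l$ is the desired bundle.

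I expect the only real subtlety — everything else being bookkeeping — to be the verification that $\mathfrak u^F(\delta)$, which a priori is a graded subalgebra of $\gr\mg^0(\mathfrak m)$, may legitimately be regarded as a subalgebra $\mg^0$ of $\mg^0(\mathfrak m)$ so that Theorem \ref{maintheor} literally applies, and that the quasi-principal bundle of Theorem \ref{quasidelta} is of type $\bigl(\mathfrak m,\mathfrak u^F(\delta)\bigr)$ in the precise sense of Definition \ref{quasidef} with this choice of $\mg^0$. This is exactly guaranteed by the compatibility-with-respect-to-the-grading hypothesis (condition (1) of Definition \ref{quasidef}, coming from (F2)) together with condition (2) of Definition \ref{compatdefin}, which furnishes the conjugating isomorphism $J$; I would simply cite this and the statement of Theorem \ref{quasidelta}, which already asserts the bundle is of that type.
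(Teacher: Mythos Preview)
Your proposal is correct and follows exactly the route the paper indicates: the paper states that Theorem \ref{cor1} is ``a direct consequence of Theorems \ref{quasidelta} and \ref{maintheor}'' and gives no further argument, so your unpacking of this --- applying Theorem \ref{quasidelta} to obtain the quasi-principal bundle of type $\bigl(\mathfrak m,\mathfrak u^F(\delta)\bigr)$, then feeding it into Theorem \ref{maintheor}, then counting dimensions --- is precisely what is intended. Your remark on the subtlety of regarding $\mathfrak u^F(\delta)$ as a subalgebra of $\mg^0(\mathfrak m)$ rather than of $\gr\mg^0(\mathfrak m)$ is also handled in the paper, in the paragraph immediately preceding Theorem \ref{quasidelta}.
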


Thus, the construction of the canonical frames for a flag structure $\bigl(\Delta,\{\mathcal Y^\gamma\}_{\gamma\in\mathcal S}\bigr)$ with the constant Tanaka symbol $\mathfrak m$  (of $\Delta$) and with  the constant flag symbol $\delta$ is reduced first to the calculation of the algebra $\mathfrak u^F(\delta)$ and then to the calculation of  the algebra $\mathfrak u\bigl(\mathfrak m, \mathfrak u^F(\delta)\bigr)$. Both task are the problems of linear algebra.

The last theorem can be applied directly to the structures of subsections \ref{difeqprelim} and \ref{dprelim}, as discussed in more detail in Examples 1-3 of the next section. To apply our scheme to the structures discussed in subsection \ref{sRprelim} we need to make two straightforward modifications,
described in two remarks at the end of this section.

In any case for all structures mentioned in subsections \ref{difeqprelim}-\ref{dprelim} ( and discussed in more detail in Examples 1-4 of section \ref{motsec} below) the ambient distribution of the corresponding flag structures are either $T\mathcal S$ or a contact distribution on $\mathcal S$ (i.e. the symbol $\mathfrak m$ is either the commutative algebra or the Heisenberg algebra of an appropriate dimension) and the group naturally acting on $\Delta$ is either  the General Linear group or the symplectic group or the conformal symplectic group. All possible symbols $\delta$ of curves of flags with respect to these three groups were listed in \cite[section 7]{flag2} and their universal algebraic prolongation $\mathfrak u^F(\delta)$ were calculated in \cite[section 8]{flag2}, using the representation theory of $\mathfrak{sl}_2$. In section 3 we describe the $\mathfrak u\bigl(\mathfrak m, \mathfrak u^F(\delta)\bigr)$ for several particular symbols $\delta$ or refer to our previous papers, where they were calculated. The calculations of $\mathfrak u\bigl(\mathfrak m, \mathfrak u^F(\delta)\bigr)$ for any flag symbol $\delta$ with respect to the three aforementioned groups will be given in
\cite{mix2} and \cite{jacsymb}.

\begin{remark}{\bf (the case when additional structures are given on the distribution $\Delta$)}
\label{addstrrem}
{\rm Assume that an additional structure is given on the distribution $\Delta$ such that the presence of this structure allows one to reduce the bundle $P^0(\mathfrak m)$ to a principle bundle $\widetilde P^0$ with the structure group $\widetilde G^0\subset G^0(\mathfrak m)$. For example, for the flag structures discussed briefly in subsection \ref{sRprelim} and, in more detail, in Example 4 of the next section, we have that $\Delta=T\mathcal S$ and the symplectic structure is given on $\mathcal S$. In this case $\mathfrak m=\mg^{-1}$ and $P^0(\mathfrak m)$ coincides with the bundle $\mathcal F(\mathcal S)$  of all frames on $\mathcal S$. Fixing a symplectic structure on $\mathfrak m$  we can reduce the bundle $\mathcal F(\mathcal S)$ to the bundle with the structure group $Sp(\mathfrak m)$ of the so-called symplectic frames such that the fiber over a point $\gamma\in \mathcal S$ consist of all isomorphisms from $\mathfrak m$ to $T_\gamma \mathcal S$, preserving the corresponding symplectic structures.  Returning to the general situation, let $\tilde \mg^0$ be the Lie algebra of $\widetilde G^0$. Then exactly the same theory will work if we will replace everywhere starting from Definition \ref{quasidef} the bundle $P^0(\mathfrak m)$ by $\widetilde P^0$, the Lie algebra $\mg^0(\mathfrak m)$ by $\widetilde \mg$, and the group $ G^0(\mathfrak m)$ by $\widetilde G^0$. $\Box$}
\end{remark}

\begin{remark} {\bf (the case when $\mathcal Y^\gamma$ are parameterized curves)}
\label{pararem}
 {\rm Assume that in the flag structure $\bigl(\Delta, \{\mathcal Y^\gamma\}_{\gamma \in\mathcal S}\bigr)$ each submanifolds $\mathcal Y^\gamma$ of flags is a curve with a distinguished parametrization, up to a translation, or shortly a parameterized curve. In order to formulate theorems similar to Theorems \ref{quasidelta} and \ref{cor1} we need only to modify the definition of a symbol of a curve of flags and its universal algebraic prolongation to the case of parameterized curves, as was done
in \cite[subsection 4.5]{flag2}.   The symbol of a parameterized  curve is an orbit of an element (and not a line)  of $\gr \mg^{-1}$ from  ${\rm gr}\mg^0(\mathfrak m)$ defined up to the adjoint action of $G_+^0(\mathfrak m)$ on the space of degree $-1$ elements. Fix a representative $\delta$
of this orbit. Then the $0$-degree algebraic prolongation $\mathfrak u_0^{F, \text {par}}(\delta)$ of the symbol $\delta$ of a parametrized curve should be defined as follows:
\begin{equation}
\label{prolong0par}
\mathfrak u_0 ^{F, \text {par}}(\delta):=\{X\in \bigl({\rm gr} \mg^{0}(\mathfrak m)\bigr)_{0}:[X,\delta]=0\}.
\end{equation}
where, as before, $\bigl({\rm gr} \mg^{0}(\mathfrak m)\bigr)_{0}$ denotes the space of all degree $0$ endomorphisms of the graded space ${\rm gr} \mg^{-1}$ (corresponding to the filtration \eqref{filtg}) belonging to ${\rm gr} \mg^{0}(\mathfrak m)$. The spaces $\mathfrak u_k ^{F, \text {par}}(\delta)$ for $k>0$ are defined recursively, using \eqref{kprolong} with $\mathfrak u_{k-1}^F(\delta)$ replaced by $\mathfrak u_{k-1}^{F, \text {par}}(\delta)$ and the universal algebraic prolongation $\mathfrak u ^{F, \text {par}}$ of the symbol $\delta$ of a parametrized curve of flag is defined as $$\mathfrak u^{F,\text{par}}(\delta)=\displaystyle{\bigoplus_{k\geq -1}\mathfrak u_k^{F,\text{par}}(\delta)}.$$
Finally, in order to formulate the results analogous to Theorems \ref{quasidelta} and \ref{cor1} for a flag structure
$\bigl(\Delta,\{\mathcal Y^\gamma\}_{\gamma\in\mathcal S}\bigr)$ with the constant Tanaka symbol $\mathfrak m$  (of $\Delta$) and with  $\mathcal Y^\gamma$ being parametrized curves with the constant symbol $\delta$, we need just to replace  $\mathfrak u^{F}(\delta)$ by $\mathfrak u^{F,\text{par}}(\delta)$ in these theorems.} $\Box$

\end{remark}

\section{Applications: flag structures via linearization}
\label{motsec}

\setcounter{equation}{0}
\setcounter{theorem}{0}
\setcounter{lemma}{0}
\setcounter{proposition}{0}
\setcounter{definition}{0}
\setcounter{cor}{0}
\setcounter{remark}{0}
\setcounter{example}{0}
In this section we describe how the linearization procedure works in general and show how the developed theory can be applied
to equivalence problems for various types of differential equation, bracket-generating distributions, sub-Riemannian and more general geometric structures mentioned in the Introduction. Before going to these examples, let us consider the following general situation.
Let $\widetilde{\mathcal S}$ be a smooth
manifold endowed with a tuple of distributions $(\widetilde \Delta, \mathcal C, \{\mathcal V_i\}_{i\in \mathbb Z})$ satisfying the following properties:

\begin{enumerate}

\item [{\bf (P1)}]

$\mathcal C$ and $\{\mathcal V_i\}_{i\in \mathbb Z}$ are subdistributions of
$\widetilde {\Delta}$ ;

\item [{\bf(P2)}]

Distribution $\mathcal C$ is integrable and satisfies
$$ [\mathcal C,\widetilde  \Delta]\subset\widetilde \Delta.$$
In other words, any section of the distribution $\mathcal C$ is an infinitesimal symmetry of the distribution $\widetilde \Delta$.

\item [{\bf (P3)}]
The distribution $\mathcal V_i$ is a subdistribution of $\mathcal V_{i-1}$ for every $i\in \mathbb Z$, $\mathcal V_i+\mathcal C=\mathcal D$ for $i\leq r$
, $\mathcal V_i=\mathcal V_s$ for $i\geq s$ for some integer $r$ and $s$, and

\begin{equation}
\label{compat}
[\mathcal C, \mathcal V_i]\subseteq \mathcal V_{i-1}+\mathcal C
\end{equation}
\item [{\bf (P4)}]
For every integers $i$  spaces
$\mathcal V_i(x)\cap \mathcal C(x)$ have the same dimensions at all points $x\in\widetilde{\mathcal S}$.
\end{enumerate}

We are interested in the (local) equivalence problem for such tuples of distributions $(\widetilde \Delta, \mathcal C, \{\mathcal V_i\}_{i\in\mathbb Z})$ with respect to the action of the group of diffeomorphisms of $\widetilde M$.  Note, that the treatment here is slightly more general than in the subsection \ref{df}, where the case of double fibration is considered, because we do not assume that subdistributions $V_i$ are integrable.

In many cases the tuple of distributions  $(\widetilde \Delta, \mathcal C, \{\mathcal V_i\}_{i\in \mathbb Z})$ will satisfy also the following strengthening of property (P3):
\medskip

{\bf (P3$\,'$)} There exists $i_0\in \mathbb Z$ such that
\begin{eqnarray}
&~&\forall i\leq i_0 \quad [\mathcal C, \mathcal V_i]= \mathcal V_{i-1}+\mathcal C, \label{compats1}\\
&~&\forall i>i_0\quad \mathcal V_i(x)=\left\{v\in \mathcal V_{i-1}(x): \begin{array}{c}\text {there exists a vector field } \hat v \text{ tangent to }  \mathcal V_{i-1}\\ \text {such that }
\hat v(x)=v \text{ and } [C, \hat v](x)\subset \mathcal V_{i-1}(x)\end{array}\right\}, \label{compats2}
\end{eqnarray}
\medskip

 If the property (P3$\,'$) holds, then all distributions $\mathcal V_i$ with $i\geq i_0$ and all distributions $\mathcal V_i+\mathcal C$ with $i<i_0$ are determined by the pair of distributions $(\mathcal C, \mathcal V_{i_0})$, using \eqref{compats1} and \eqref{compats2} inductively. We will say in this case that the tuple of distributions $(\widetilde \Delta, \mathcal C, \{\mathcal V_i\}_{i\in \mathbb Z})$ is \emph{generated modulo $\mathcal C$ by the triple of distributions  $(\widetilde \Delta,\mathcal C, \mathcal V_{i_0})$.}

 Finally let us discuss the following strengthening of property (P3$\,'$):
\medskip

 {\bf (P3$\,''$)} There exists $i_0\in \mathbb Z$ such that
\begin{equation}
\forall i\leq i_0 \quad [\mathcal C, \mathcal V_i]= \mathcal V_{i-1}, \label{compats3}
\end{equation}
 and also \eqref{compats2} holds.
\medskip

 If property (P3$\,''$) holds, then the whole tuple of distributions $\{\mathcal V_i\}_{i\in\mathbb Z}$
  is determined by the pair of distributions $(\mathcal C, \mathcal V_{i_0})$ using \eqref{compats3} and \eqref{compats2} inductively.
 We  will say in this case that the tuple of distributions $(\widetilde \Delta, \mathcal C, \{\mathcal V_i\}_{i\in \mathbb Z})$ is \emph{ generated by the triple of distributions  $(\widetilde \Delta,\mathcal C, \mathcal V_{i_0})$.} In practice, one starts with a triple of distributions $(\widetilde \Delta,\mathcal C, \mathcal V)$ such that $\mathcal C$ and $\mathcal V$ are subdistributions of $\widetilde \Delta$ and $\mathcal  C$ satisfies the property (P2) above and then, setting $\mathcal V=\mathcal V_{i_0}$ for some integer $i_0$, one can generate the tuple $\{\mathcal V_i\}_{i\in\mathbb Z}$ inductively using \eqref{compats3} and \eqref{compats2}.


 Let  $\Fol(\mathcal C)$ be a foliation of $\widetilde{\mathcal S}$ by maximal integral submanifolds of the integrable distribution $\mathcal C$.
 The main idea to treat the equivalence problem under consideration is to pass to the quotient manifold
 by the foliation $\Fol(\mathcal C)$.
 Indeed, locally we can assume that there exists a
quotient manifold $$\mathcal S=\widetilde{\mathcal S}/\Fol(\mathcal C),$$
whose points are leaves of $\Fol(\mathcal C)$.
Let $\Phi\colon
\widetilde{\mathcal S}\to\mathcal S
$ be the canonical projection to the quotient manifold. Then by the property (2) above the push-forward of the distribution $\widetilde{\Delta}$ by $\Phi$ defines the distribution
$$\Delta:=\Phi_*(\widetilde\Delta)$$
on the quotient manifold $\mathcal S$.

Fix a leaf $\gamma$ of  $\Fol(\mathcal C)$. Let

\begin{equation}
\label{Ji}
\mathcal Y_i^\gamma(x):=\Phi_*(\mathcal V _i(x))\}, \quad x\in \gamma.
\end{equation}
Then we can define the map $\mathcal Y^\gamma$ from $\gamma$ into the appropriate variety of flag of the space $\widetilde \Delta(\gamma)$ as follows:

\begin{equation}
\label{Jflag}
\mathcal Y^\gamma(x)=\{\mathcal Y^\gamma_i(x))\}_{i\in \mathbb Z}, \quad x\in \gamma;\quad \forall i\in\mathbb Z\,\quad\mathcal Y^\gamma_i(x)\subseteq  \mathcal Y^\gamma_{i-1}(x)
\end{equation}
The map $\mathcal Y^\gamma$ or its image in the appropriate  flag variety is called the \emph{linearization of the sequence of distributions $\{\mathcal V_i\}_{i\in Z}$ along the foliation $\Fol (\mathcal C)$ at the leaf $\gamma$}. This image  will be denote by $\mathcal Y^\gamma$ as well.


By means of the linearization procedure we obtain from the original tuple of distributions $(\widetilde \Delta, \mathcal C, \{\mathcal V_i\}_{i\in \mathbb Z})$ the distributions $\Delta$ on the manifold $\mathcal S$ with the fixed submanifolds $\mathcal Y^\gamma$ in an appropriate flag variety on each fiber $\Delta(\gamma)$ of $\Delta$, $\gamma\in \mathcal S$, i.e. the flag structure
 $\bigl(\Delta, \{\mathcal Y^\gamma\}_{\gamma \in\mathcal S}\bigr)$. Note also that $\widetilde{\mathcal S}\rightarrow \mathcal S$ is exactly the tautological bundle over $\mathcal S$ in the sense of Remark \ref{nice}.

It is clear from our constructions  that if \emph{two tuples of distributions
satisfying properties (P1)-(P4), are equivalent, then the corresponding flag structures obtained by the linearization procedure are equivalent.} The converse is not true in general, but it is obviously true if every distribution $\mathcal V_i\neq 0$ contains the distribution $\mathcal C$. Namely, we have

\begin{proposition}
\label{flagprop}
Assume that two tuples of distributions $(\widetilde \Delta, \mathcal C, \{\mathcal V_i\}_{i\in \mathbb Z})$ and $(\widetilde \Delta^1, \mathcal C^1, \{\mathcal V_i^1\}_{i\in \mathbb Z})$ satisfy properties (P1)-(P4) and, in addition,  $\mathcal C$ is a subdistribution of $\mathcal V_i$ and
$\mathcal C^1$ is a subdistribution of $\mathcal V_i^1$ for any $i\in\mathbb Z$. Then these two structures are equivalent if and only if the corresponding flag structures obtained by the linearization procedure are equivalent.
\end{proposition}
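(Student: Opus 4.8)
The forward implication --- equivalent tuples produce equivalent flag structures --- has already been observed, since the linearization procedure \eqref{Ji}--\eqref{Jflag} is natural with respect to diffeomorphisms. So the plan is to prove the converse, and the whole content is that, \emph{under the extra hypothesis $\mathcal C\subseteq\mathcal V_i$ and $\mathcal C^1\subseteq\mathcal V_i^1$ for all $i$, the entire tuple $(\widetilde\Delta,\mathcal C,\{\mathcal V_i\})$ can be reconstructed, canonically, from the flag structure $\bigl(\Delta,\{\mathcal Y^\gamma\}\bigr)$ it produces}; an equivalence of flag structures then automatically lifts to an equivalence of tuples. The first step is to record that $\Phi\colon\widetilde{\mathcal S}\to\mathcal S$ is, locally, nothing but the tautological bundle of the flag structure in the sense of Remark~\ref{nice}: the fibre of $\Phi$ over $\gamma$ is the leaf $\gamma$ of $\Fol(\mathcal C)$, which under $x\mapsto\mathcal Y^\gamma(x)$ is identified with the submanifold $\mathcal Y^\gamma$ of the relevant flag variety of $\Delta(\gamma)$. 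Consequently the triple $(\widetilde{\mathcal S},\Phi,\mathcal C)$, with $\mathcal C=\ker d\Phi$, is determined up to isomorphism over $\mathcal S$ by $\bigl(\Delta,\{\mathcal Y^\gamma\}\bigr)$ alone.

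Next I would recover $\widetilde\Delta$ and the $\mathcal V_i$. By construction $d\Phi_{\tilde x}\bigl(\widetilde\Delta(\tilde x)\bigr)=\Delta(\gamma)$ and $d\Phi_{\tilde x}\bigl(\mathcal V_i(\tilde x)\bigr)=\mathcal Y^\gamma_i(\tilde x)$ for $\tilde x$ over $\gamma$, where $\mathcal Y^\gamma_i(\tilde x)$ denotes the $i$-th term of the flag $\mathcal Y^\gamma(\tilde x)$. Since $\mathcal C(\tilde x)=\ker d\Phi_{\tilde x}$ lies in $\widetilde\Delta(\tilde x)$ by (P1) and in $\mathcal V_i(\tilde x)$ by hypothesis, both subspaces are saturated for $d\Phi_{\tilde x}$, whence
\begin{equation*}
\widetilde\Delta(\tilde x)=(d\Phi_{\tilde x})^{-1}\bigl(\Delta(\gamma)\bigr),\qquad \mathcal V_i(\tilde x)=(d\Phi_{\tilde x})^{-1}\bigl(\mathcal Y^\gamma_i(\tilde x)\bigr),\quad i\in\mathbb Z .
\end{equation*}
This is precisely the point where the extra hypothesis is used: without it the flag structure determines only $\mathcal V_i+\mathcal C$, which is not enough.

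To finish, let $F\colon\mathcal S\to\mathcal S^1$ be an equivalence of the two flag structures, i.e.\ a diffeomorphism with $F_*\Delta=\Delta^1$ and such that the map $\overline F_\gamma$ induced by $dF_\gamma$ on flag varieties carries $\mathcal Y^\gamma$ onto $\mathcal Y^{1,F(\gamma)}$ for every $\gamma$. Using the tautological-bundle identifications, $F$ lifts to a diffeomorphism $\widetilde F\colon\widetilde{\mathcal S}\to\widetilde{\mathcal S}^1$ over $F$, uniquely characterized by $\mathcal Y^{1,F(\gamma)}\bigl(\widetilde F(\tilde x)\bigr)=\overline F_\gamma\bigl(\mathcal Y^\gamma(\tilde x)\bigr)$, so that $\Phi^1\circ\widetilde F=F\circ\Phi$. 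Differentiating this identity and using $F_*\Delta=\Delta^1$ gives $d\widetilde F(\mathcal C)=\mathcal C^1$ (equality of kernels) and $d\widetilde F(\widetilde\Delta)=\widetilde\Delta^1$ (preimages of $\Delta$ and $\Delta^1$); applying the defining relation for $\widetilde F$ term by term to the flags gives $d\widetilde F(\mathcal V_i)=\mathcal V_i^1$ for every $i$. Hence $\widetilde F$ is an equivalence of the original tuples, which proves the proposition.

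The one genuinely delicate ingredient --- which I would invoke from the standing assumptions rather than reprove --- is the identification of $\widetilde{\mathcal S}$ with the tautological bundle, i.e.\ that $x\mapsto\mathcal Y^\gamma(x)$ is a diffeomorphism of the leaf $\gamma$ onto $\mathcal Y^\gamma$. This is exactly what guarantees that $\mathcal Y^\gamma$ is an honest submanifold of the flag variety, and once it is granted everything else in the argument is formal.
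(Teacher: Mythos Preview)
Your proof is correct and is exactly the argument the paper has in mind: the paper does not give a proof of this proposition at all, merely declaring the converse ``obviously true'' under the hypothesis $\mathcal C\subseteq\mathcal V_i$. Your key step --- that $\mathcal C(\tilde x)=\ker d\Phi_{\tilde x}\subset\mathcal V_i(\tilde x)$ forces $\mathcal V_i(\tilde x)=(d\Phi_{\tilde x})^{-1}\bigl(\mathcal Y^\gamma_i(\tilde x)\bigr)$ --- is precisely the content the paper is alluding to, and your lift via the tautological-bundle identification (which the paper states explicitly just before the proposition) is the natural way to finish; your caveat about that identification is honest and appropriate, since the paper also uses it as a standing assumption rather than something proved.
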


Note that the application of the linearization procedure to the tuple $(\widetilde \Delta, \mathcal C, \{\mathcal V_i\}_{i\in \mathbb Z})$ and to the tuple $(\widetilde \Delta, \mathcal C, \{\mathcal V_i+C\}_{i\in \mathbb Z})$ leads to the same flag structure.

\begin{definition}
\label{reduc}
We say that the tuple $(\widetilde \Delta, \mathcal C, \{\mathcal V_i\}_{i\in \mathbb Z})$, satisfying properties (P1)-(P4), is recoverable  by the linearization procedure, if each subdistribution $\mathcal V_i$ is intrinsically recovered from the tuple $(\widetilde \Delta, \mathcal C, \{\mathcal V_i+\mathcal C\}_{i\in \mathbb Z})$.
\end{definition}
One example of such intrinsic recovery is when  $\mathcal V_i$ is a unique integrable subdistribution of maximal rank of $\mathcal V_i+\mathcal C$. Another example is when $\mathcal V_i$ is the Cauchy characteristic subdistribution of $\mathcal V_{i-1}+\mathcal C$, i.e. the maximal subdistribution of $\mathcal V_{i-1}+\mathcal C$ such that $[\mathcal C,\mathcal V_i]\subseteq \mathcal V_{i-1}+\mathcal C$. As an immediate consequence of Proposition \ref{flagprop} we get the following

\begin{cor}
Two tuples of distributions
 satisfying properties (P1)-(P4) and recoverable by the linearization procedure are equivalent if and only if the corresponding flag structures obtained by the linearization procedure are equivalent.
\end{cor}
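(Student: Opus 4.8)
The plan is to read off the corollary from Proposition~\ref{flagprop} together with the observation recorded just before Definition~\ref{reduc}: linearizing $(\widetilde\Delta,\mathcal C,\{\mathcal V_i\}_{i\in\mathbb Z})$ and linearizing $(\widetilde\Delta,\mathcal C,\{\mathcal V_i+\mathcal C\}_{i\in\mathbb Z})$ produce one and the same flag structure. The ``only if'' direction needs nothing new: it is already noted in the text that an equivalence of two tuples satisfying (P1)--(P4) pushes down, via the canonical projections $\Phi$ and $\Phi^1$ and the functoriality of $\Phi_*$, to an equivalence of the induced flag structures. So all the content sits in the ``if'' direction.

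First I would verify that the replacement $\mathcal V_i\rightsquigarrow\mathcal V_i+\mathcal C$ preserves properties (P1)--(P4). Properties (P1) and (P2) are untouched, since $\widetilde\Delta$ and $\mathcal C$ are unchanged. In (P3) the nesting $\mathcal V_i+\mathcal C\subseteq\mathcal V_{i-1}+\mathcal C$ and the stabilization for $i\leq r$ and $i\geq s$ are inherited immediately, and the bracket condition \eqref{compat} for the new tuple follows from \eqref{compat} for the old one together with the integrability of $\mathcal C$ (which gives $[\mathcal C,\mathcal C]\subseteq\mathcal C$), because $[\mathcal C,\mathcal V_i+\mathcal C]\subseteq[\mathcal C,\mathcal V_i]+[\mathcal C,\mathcal C]\subseteq(\mathcal V_{i-1}+\mathcal C)+\mathcal C=\mathcal V_{i-1}+\mathcal C$. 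Finally (P4) is trivial for the new tuple because $(\mathcal V_i+\mathcal C)(x)\cap\mathcal C(x)=\mathcal C(x)$ has constant dimension. Moreover $\mathcal C$ is by construction a subdistribution of each $\mathcal V_i+\mathcal C$ and likewise $\mathcal C^1\subseteq\mathcal V_i^1+\mathcal C^1$, so the extra hypothesis of Proposition~\ref{flagprop} holds for the pair of modified tuples.

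Now suppose the two flag structures coming from the original tuples are equivalent. Since those flag structures coincide with the ones obtained by linearizing the modified tuples, Proposition~\ref{flagprop} applied to $(\widetilde\Delta,\mathcal C,\{\mathcal V_i+\mathcal C\}_{i\in\mathbb Z})$ and $(\widetilde\Delta^1,\mathcal C^1,\{\mathcal V_i^1+\mathcal C^1\}_{i\in\mathbb Z})$ yields a diffeomorphism $F\colon\widetilde{\mathcal S}\to\widetilde{\mathcal S}^1$ with $F_*\widetilde\Delta=\widetilde\Delta^1$, $F_*\mathcal C=\mathcal C^1$, and $F_*(\mathcal V_i+\mathcal C)=\mathcal V_i^1+\mathcal C^1$ for all $i\in\mathbb Z$. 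It remains to upgrade this to $F_*\mathcal V_i=\mathcal V_i^1$, and this is exactly the point at which the recoverability hypothesis (Definition~\ref{reduc}) is used: each $\mathcal V_i$ is produced from the data $(\widetilde\Delta,\mathcal C,\{\mathcal V_j+\mathcal C\}_{j\in\mathbb Z})$ by an intrinsic (hence diffeomorphism-invariant) construction, and similarly for $\mathcal V_i^1$; since $F$ carries the defining data of $\mathcal V_i$ onto the defining data of $\mathcal V_i^1$, it carries $\mathcal V_i$ onto $\mathcal V_i^1$. Thus $F$ is an equivalence of the original tuples, which is what we wanted.

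I expect the only delicate point to be making ``intrinsically recovered'' in Definition~\ref{reduc} precise enough that the final step --- an intrinsic construction commutes with diffeomorphisms --- is a tautology rather than something needing its own argument. Once this is pinned down (as it is for the two concrete instances listed after Definition~\ref{reduc}, namely the unique integrable subdistribution of maximal rank, and the Cauchy characteristic subdistribution of $\mathcal V_{i-1}+\mathcal C$, both of which are manifestly natural with respect to diffeomorphisms), the corollary is a purely formal consequence of Proposition~\ref{flagprop}; everything else is bookkeeping about which of (P1)--(P4) survives the passage from $\mathcal V_i$ to $\mathcal V_i+\mathcal C$.
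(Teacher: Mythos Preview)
Your argument is correct and is exactly the reasoning the paper has in mind when it declares the corollary an ``immediate consequence'' of Proposition~\ref{flagprop}: pass to the modified tuples $(\widetilde\Delta,\mathcal C,\{\mathcal V_i+\mathcal C\})$, observe these satisfy (P1)--(P4) and the extra containment hypothesis of Proposition~\ref{flagprop}, note that the linearization is unchanged by this passage, and then invoke recoverability (Definition~\ref{reduc}) to descend the resulting equivalence back to the original $\mathcal V_i$. The paper gives no further detail beyond this, so your write-up is simply a careful unpacking of the intended one-line deduction.
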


In most of the applications $\mathcal C$ will be a distribution of rank 1 (a line distribution). In this case, after the linearization procedure, one gets the distinguished curves of flags on each fiber of $\Delta$. From relation \eqref{compat} of property (P2) it follows that the submanifolds of flags $\mathcal Y^\gamma$ are not arbitrary but they are the \emph {compatible with respect to differentiation} as described in subsection \ref{quasiflag}
following the terminology of \cite[Section 3]{flag2}.

\begin{remark}
\label{refinrem}
{\rm Finally note that if our original structure $(\widetilde \Delta, \mathcal C, \{\mathcal V_i\}_{i\in \mathbb Z})$ satisfies the property (P3$\,'$) for some integer $i_0$,
then the  submanifold of flags $\mathcal Y^\gamma=\{\mathcal Y_i^\gamma(x)\}_{i\in \mathbb Z}$ is completely determined by the submanifold
$\mathcal Y_{i_0}^\gamma$ in the corresponding Grassmannian. Moreover,  $\mathcal Y^\gamma=\{\mathcal Y_i^\gamma(x)\}_{i\in \mathbb Z}$ is the so-called maximal refinement of $\mathcal Y_{i_0}^\gamma$ in the sense of \cite[section 6]{flag2}.}$\Box$
\end{remark}


As a direct consequence of Theorem \ref{cor1}
we have the following

\begin{theorem}
\label{main2}
Assume that tuples of distributions $(\widetilde \Delta, \mathcal C, \{\mathcal V_i\}_{i\in \mathbb Z})$ satisfy conditions (P1)-(P4)
and the resulting
flag structures $\bigl(\Delta,\{\mathcal Y^\gamma\}_{\gamma\in\mathcal S}\bigr)$ satisfy conditions (F1)-(F4) with the constant Tanaka symbol $\mathfrak m$  (of $\Delta$) and with  the constant flag symbol $\delta$. Assume also that the universal algebraic prolongation $\mathfrak u\bigl(\mathfrak m, \mathfrak u^F(\delta)\bigr)$ of the pair $(\mathfrak m, \mathfrak u^F(\delta))$ is finite dimensional. Then to any such  tuple $(\widetilde \Delta, \mathcal C, \{\mathcal V_i\}_{i\in \mathbb Z})$ one can assign a bundle over $\widetilde {\mathcal S}$ of the dimension  $\dim\,\mathfrak u\bigl(\mathfrak m, \mathfrak u^F(\delta)\bigr)$ endowed with the canonical frame. If, in addition, the tuples of distributions $(\widetilde \Delta, \mathcal C, \{\mathcal V_i\}_{i\in \mathbb Z})$ are recoverable by the linearization procedure, two of them  are equivalent if and only if the canonical frames assigned to them are equivalent.
\end{theorem}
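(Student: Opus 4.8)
The plan is to deduce the statement from Theorem \ref{cor1} together with the recoverability results of this section, regarding it as a transport of structure along the linearization procedure. First I would recall the output of the linearization procedure described above: from a tuple $(\widetilde\Delta,\mathcal C,\{\mathcal V_i\}_{i\in\mathbb Z})$ satisfying (P1)--(P4) one obtains, locally, the quotient $\mathcal S=\widetilde{\mathcal S}/\Fol(\mathcal C)$ with canonical projection $\Phi$, the distribution $\Delta=\Phi_*\widetilde\Delta$ (well defined by (P2)), and on each fiber $\Delta(\gamma)$ the submanifold of flags $\mathcal Y^\gamma$ of \eqref{Ji}--\eqref{Jflag}; moreover $\widetilde{\mathcal S}\to\mathcal S$ is precisely the tautological bundle in the sense of Remark \ref{nice}. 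Condition (F3), compatibility with respect to differentiation, holds by hypothesis (and is automatic from \eqref{compat} of (P2) when $\mathcal C$ is a line distribution); combining it with the assumed (F1), (F2) and (F4), the pair $\bigl(\Delta,\{\mathcal Y^\gamma\}_{\gamma\in\mathcal S}\bigr)$ is a flag structure with constant Tanaka symbol $\mathfrak m$ and constant flag symbol $\delta$ in the exact sense of subsection \ref{quasiflag}.

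With this in hand, Theorem \ref{cor1} applies directly: since $\mathfrak u\bigl(\mathfrak m,\mathfrak u^F(\delta)\bigr)$ is assumed finite dimensional, it assigns to $\bigl(\Delta,\{\mathcal Y^\gamma\}_{\gamma\in\mathcal S}\bigr)$ a bundle over $\mathcal S$ of dimension $\dim\mathfrak u\bigl(\mathfrak m,\mathfrak u^F(\delta)\bigr)$ carrying a canonical frame. Unwinding its proof — Theorem \ref{quasidelta} produces the canonical quasi-principal subbundle $P^0\subset P^0(\mathfrak m)$, and Theorem \ref{maintheor} produces the prolongation tower $P^0\leftarrow P^1\leftarrow\cdots\leftarrow P^l$ with $P^l$ bearing the canonical frame — and using that $P^0$ is, by Remark \ref{nice}, simultaneously a bundle over the tautological bundle $\widetilde{\mathcal S}$, we see that $P^l$ is fibered over $\widetilde{\mathcal S}$. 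Hence the same bundle, of the same dimension $\dim\mathfrak u\bigl(\mathfrak m,\mathfrak u^F(\delta)\bigr)$, can be regarded as living over $\widetilde{\mathcal S}$, which is the first assertion.

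For the equivalence statement I would argue in both directions. If two tuples are equivalent, the realizing diffeomorphism preserves $\Fol(\mathcal C)$, hence descends to the quotients and carries one flag structure onto the other; since every step producing $P^l$ (Theorems \ref{quasidelta} and \ref{maintheor}) is canonical, it commutes with isomorphisms of the input data, so the induced map intertwines the two canonical frames. Conversely, assume the canonical frames on the two copies of $P^l$ are intertwined by a diffeomorphism. Because $P^l$ is obtained from $P^0$ by a chain of affine-bundle projections and $P^0\to\widetilde{\mathcal S}$ is the structural projection, a frame-preserving diffeomorphism of the $P^l$'s must descend, step by step, to an isomorphism of the $P^0$'s compatible with their embedding in $P^0(\mathfrak m)$, hence to an isomorphism of the underlying flag structures $\bigl(\Delta,\{\mathcal Y^\gamma\}\bigr)$. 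If in addition the tuples are recoverable by the linearization procedure, then (as in the corollary following Definition \ref{reduc}, or directly from Proposition \ref{flagprop}) each $\mathcal V_i$ is intrinsically reconstructed from the flag structure, so this isomorphism lifts to an equivalence of the original tuples.

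The step I expect to be the main obstacle is the downward-and-backward part of the converse: one must verify that a diffeomorphism of $P^l$ preserving the canonical frame is automatically compatible with the entire tower of fibrations down to $\widetilde{\mathcal S}$ and $\mathcal S$ and with the $G^0(\mathfrak m)$-structure on $P^0$, so that it genuinely descends to a bundle isomorphism and to a base diffeomorphism carrying one flag structure onto the other. This relies on the fact, built into the prolongation construction of sections \ref{firstprolongsec} and \ref{highprolongsec}, that every $P^i$ and every structural map is intrinsically attached to the data, together with the rigidity of absolute parallelisms; once that is granted, the dimension count and the forward implication are essentially formal.
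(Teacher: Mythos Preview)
Your proposal is correct and follows essentially the same route as the paper, which simply states that Theorem~\ref{main2} is a direct consequence of Theorem~\ref{cor1} (together with the corollary following Definition~\ref{reduc} for the equivalence clause) without spelling out any further argument. Your write-up is in fact more explicit than the paper's: you unpack the transport of the bundle from $\mathcal S$ to $\widetilde{\mathcal S}$ via Remark~\ref{nice}, and you sketch the standard descent argument for the converse equivalence, whereas the paper leaves all of this implicit.
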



This theorem is of a quite general nature and, as the examples below show, it treats in a unified way many previously known  results  and  generalizes them significantly. Using this theorem one can obtain the main results of \cite{chern50} and \cite{doub2} on contact geometry of scalar ODEs of order $3$ and of order greater than $3$ ,respectively, (see Example 1 below) and generalize them to various  equivalence problems for systems of differential equations, including the systems with different highest order of derivatives for different unknown functions, called the systems of ODEs of mixed order (see Example 2 below). Using Theorem  \ref {main2} one can obtain the main result of \cite{doubzel2} on canonical frames for rank 2 distributions on manifolds of arbitrary dimension. Our approach here  also gives much more conceptual  point of view on the construction of the preprint \cite{doubzel3} on rank $3$ distributions, or, more precisely, the part of these results addressing the existence of the canonical frames. It generalized the results of \cite{doubzel2, doubzel3} to distributions of arbitrary rank (see Example 3). The modification of this Theorem, corresponding to the flag structure with $\mathfrak F^\gamma$ being parameterized curves of flags gives the unified construction of canonical frames for sub-Riemannian, sub-Finslerian, and more general additional structures on distributions.

In the context of Theorem \ref{main2} the following natural question arises: For what pairs $(\delta, \mathfrak m)$ is the canonical frame of Theorem \ref{main2} can be chosen as the Cartan connection over $\widetilde {\mathcal S}$? The answer
to this question, even in the case of the commutative or Heisenberg algebra $\mathfrak m$,
is known yet for several particular cases only.


\medskip

{\bf Example 1. Scalar ordinary differential equations up to a contact transformation.}
Consider scalar ordinary differential
equations of the form
\begin{equation}
\label{ODE}
y^{(n)} = f(x,y,y',\dots,y^{(n-1)}).
\end{equation}
Each such equation can be considered as a hypersurface $\E$ in the jet space
$\JT{n}$, called the  \emph{equation manifold}.
Equations \eqref{ODE} are  solved with respect
to the highest derivative, so the restriction of the natural projection
$\pi_0\colon \JT{n}\to\JT{n-1}$ to the hypersurface $\E$ is a diffeomorphism.
Two such equations $\E$ and $\E'$ are said to be \emph{contact equivalent}, if
there exists a contact transformation $\Phi\colon \JT{1}\to\JT{1}$ with a
prolongation $\Phi^{n}$ to $\JT{n}$ such that $\Phi^{n}(\E)=\E'$.

Note that by Lie-Backlund theorem any diffeomorphism of $\JT{n}$ preserving the Cartan distribution (i.e. the rank 2 distribution defined by the contact forms $\theta_i=dy_i -y_{i+1}dx,\quad i=0,\dots,n-1$, in the standard coordinates $(x,y_0,\ldots, y_{n})$) is the prolongation to $\JT{n}$ of some contact transformation of $\JT{1}$. Therefore two equations
 $\E$ and $\E'$  are contact equivalent if and only if there is a diffeomorphism $\Psi$ of $\JT{n}$
which preserves the Cartan distribution  on
$\JT{n}$ and such that $\Psi(\E)=\E'$.

The equivalence problem for ordinary differential equations up to contact transformations is a classical subject going back to the works of
Lie, Tresse, Elie Cartan~\cite{cartan24}, Chern~\cite{chern50},
and others. The existence of the normal Cartan connection (a special type of frames) associated
with any system of ODEs was proved in \cite{chern50} for the equations of third order and in ~\cite{doub2} for $n>3$, The latter is based on the Tanaka-Morimoto theory of normal Cartan connection
for filtered structures on manifold ~\cite{tan,tan2,mori}.

The reason we start with this example is because
the problem of contact equivalence of equations \eqref{ODE} can be reformulated  as the equivalence problem for a very particular tuples of distributions satisfying properties (P1)-(P4) and the linearization procedure in this case gives a shorter way to obtained important contact invariant of ODE's, the so-called generalized Wilczynski invariants \cite{doub3}, without constructing the whole normal Cartan connection  mentioned above. It also gives an alternative way for the construction of the canonical frame in this problem.

The Cartan distribution on
$\JT{n}$
defines the line distribution $\mathcal X$ on $\E$. This distribution is obtained by the intersection of the Cartan distribution with the tangent space to $\E$ at every point of $\E$. Note that the corresponding foliation $\Fol(S)$ is nothing but the foliation of the prolongations of the solutions of our ODE to $\JT{n}$.
In the coordinates $(x,y_0,\ldots, y_{n-1})$ on~$\E$:
\begin{equation}
\label{Scoord}
\mathcal X=\left\langle \frac{\partial}{\partial x} + \sum_{i=0}^{n-2}y_{i+1} {\partial\over\partial y_i}+F(x,y_0,y_1,\dots,y_{n-1}){\partial\over\partial y_{n-1}}\right\rangle.
\end{equation}

Further, let $\pi_i\colon\JT{n}\mapsto\JT{n-1-i}$ be the canonical projection, $0\leq i\leq n-1$. For any $\varepsilon\in\E$ we can define the filtration $\{V_i(\varepsilon)\}_{i=0}^{n-1}$ of $T_\varepsilon \E$ as follows:
\begin{equation}
\label{Vicoord}
V_i(\varepsilon)=\ker d_\varepsilon\pi_i \cap T_\varepsilon \E.
\end{equation}
Then $V^i$ is a rank $i$ distribution on $\E$. In the coordinates $(x,y_0,\ldots, y_{n-1})$ on $\E$ we have
\begin{equation}
\label{Vicoord1}
V_i=\left\langle \frac{\partial}{\partial y_{n-i}},\dots, \frac{\partial}{\partial y_{n-1}} \right\rangle.
\end{equation}

The tuple of distributions $(\widetilde \Delta, \mathcal C, \{\mathcal V_i\}_{i\in \mathbb Z})$, where $\widetilde \Delta=T\mathcal E$, $\mathcal C=\mathcal X$, $\mathcal V_i=0$  for $i\geq 0$,
$\mathcal V_i=V_{-i}$ for $-n+2\leq i\leq -1$, $\mathcal V_{-n+1}=
V_{n-1}+\mathcal X$, and $\mathcal V_i=
T\mathcal E$ for $i<-n+1$,
satisfies properties (P1)-(P4) above.
We say that this tuple of distributions is \emph{associated with the ordinary differential equation \eqref{ODE} with respect to the contact equivalence}
Below we justify this terminology.
Note that  relations \eqref{Scoord} and \eqref{Vicoord1} imply that it satisfies the property (P3$\,'$) for any $i_0$ satisfying
$-n+2\leq i\leq -1$ and even the property (P3$\,''$) for $i_0=n-2$.
The latter implies that two ordinary differential equations are contact equivalent if and only if the tuples of distributions associated with these equations are equivalent.

Moreover, the constructed tuple $(\widetilde \Delta, \mathcal C, \{\mathcal V_i\}_{i\in \mathbb Z})$ is recoverable by the linearization procedure. For this we only need to check that the distributions $\mathcal V_i$ with $-n+2\leq i\leq -1$ can be intrinsically recovered from the tuple $(\widetilde \Delta, \mathcal C, \{\mathcal V_i\oplus \mathcal C\}_{i\in \mathbb Z})$. The latter follows from the fact that  $\mathcal V_i$
is the Cauchy characteristic subdistribution of $\mathcal V_{i-1}+\mathcal C$, which is a part of the proof of the Lie-Backlund theorem in this case.

Let $\mathrm{Sol}$ denote the quotient manifold $\mathcal \E/\Fol(\mathcal X)$, i.e. the manifold of the prolonged solutions of the equation manifold $\E$.
Consider the linearization $\mathcal Y^\gamma=\{\mathcal Y^\gamma_i\}_{i\in Z}$ of the sequence of distributions $\{\mathcal V_i\}$ along the foliation $\Fol (\mathcal X)$ at a leaf $\gamma$ (i.e. along a prolongation of equation \eqref{ODE}).
By  our constructions, $\dim\, \mathcal Y_i^\gamma(\varepsilon)=-i$ if $-n+1\leq i\leq -1$, $\dim\, \mathcal Y_i^\gamma(\varepsilon)=0$ if $i\geq 0$, and $\dim\, \mathcal Y_i^\gamma(\varepsilon)=n-1$ if $i<-n+1$.
In particular, $\mathcal Y_\gamma^{-1}$ is a curve in the projective space $\mathbb PT_\gamma \mathrm{Sol}$.

Moreover, relations \eqref{Scoord},\eqref{Vicoord1},  and Remark \ref{refinrem} imply again that
for any $\varepsilon\in \gamma$   the space $\mathcal Y^\gamma_{i}(\varepsilon)$ of $T_\gamma\rm{Sol}$ is exactly the $i$-th osculating space of the curve $\mathcal Y_{-1}^\gamma$ at
$\varepsilon$.
In other words, the whole curve of flags $\mathcal Y^\gamma$ is completely determined by the curve $\mathcal Y^\gamma_{-1}$ in a projective space.
Note also that a curve in a projective space corresponds to a linear differential equation, up to transformations of independent and dependent variables  preserving the linearity. For the curve $\mathcal Y^\gamma_{-1}$ this linear equation is exactly the classical linearization (or the equation in variations) of the original differential equation \eqref{ODE} along the (prolonged) solution $\gamma$. This is why we use the word linearization here.

The symbol $\delta^1_n$ of the curve of complete flags $\mathcal Y^\gamma$ (with respect to $GL(T_\gamma{\rm Sol})$) at any point is a line of degree $-1$ endomorphisms of the corresponding graded spaces, generated by an endomorphism which has the matrix equal to a Jordan nilpotent block in some basis.
The  flat curve with this symbol is the curve of osculating subspaces of a rational normal curve in the projective space $\mathbb P T_\gamma{\rm Sol}$. Recall that a rational normal curve in $\mathbb P W$ is a curve represented as $t\mapsto [1:t:t^2:\ldots t^n]$ in some homogeneous coordinates.
It is well-known that the algebra of infinitesimal symmetries of a rational normal curve and, therefore, the universal algebraic prolongation $\mathfrak u^F(\delta^1_n)$  is equal to the image of the irreducible
embedding of $\gl_2(\mathbb R)$ into $\gl(T_\gamma{\rm Sol})$ (see also \cite[Theorem 8.3]{flag2}, where the universal algebraic prolongation was computed for all symbols of curves of flags with respect to the General Linear group).

Further, recall that in this case the symbol of the distribution $\Delta$ is isomorphic to the $n$-dimensional commutative Lie algebra, i.e. to $\mathbb R^n$. By direct computations,
the Tanaka universal algebraic prolongation $\mathfrak u\bigl(\mathbb R^n, \mathfrak u^F(\delta^1_n)\bigr)$ of the pair $(\mathbb R^n, \mathfrak u^F(\delta^1_n))$ is isomorphic to $\mathfrak{sp}_4(\mathbb R)$, if $n=3$, and to the semidirect sum of $\gl_2(\mathbb R)\leftthreetimes\mathbb R^n$ of  $\gl_2(\mathbb R)$ and $\mathbb R^n$, if $n>3$ (were $\gl_2(\mathbb R)$ acts irreducibly on $\mathbb R^n$), as expected from \cite{chern50} in the first case and from \cite{doub2} in the second case. Note also that in the second case already the first algebraic prolongation $\mathfrak u^1(\mathbb R^n, \mathfrak u^F(\delta^1_n))$ is equal to zero, i.e. the canonical bundle lives already on the quasi-principle bundle $P$ assigned to the flag structure $\bigl(\Delta, \{\mathcal Y^\gamma\}_{\gamma \in\mathcal S}\bigr)$ by Theorem \ref{quasidelta}. Note also that the local contact geometry of second-order ODE's is trivial,
since any two such equations are locally contact equivalent
and the corresponding algebra $\mathfrak u^1(\mathbb R^2, \mathfrak u^F(\delta^1_2))$ is infinite dimensional  with the $k$th algebraic prolongation equal to the space if $\mathbb R^2$-values homogeneous polynomials of degree $k+1$.

Further, it follows from \cite[subsection 4.1]{flag1} that the symbol $\delta^1_n$ is nice in the sense of Remark \ref{nice}.
In this way the classical fundamental system of invariants of curves in projective spaces, the Wilczynski invariants, can be constructed (see \cite{flag2} or \cite{doub3} for the detail) for each curve $\mathcal Y^\gamma$ and they produce
the contact invariants of the original ODE, called the \emph{generalized Wilczinski invariants} (\cite{doub3}).

Finally, it can be shown that the normalization condition for Theorem \ref{cor1} can be chosen such that the resulting canonical frame  will be a Cartan connection over the equation manifold $\E$ modeled by the corresponding  $Sp_4(\mathbb R)$ parabolic geometry if $n=3$ and  by a homogeneous space corresponding to a pair of Lie algebras  $\gl_2(\mathbb R)\leftthreetimes\mathbb R^n$ and $\mathfrak t_2(\mathbb R)$ if $n>3$, where $\mathfrak t_2(\mathbb R)$ is the algebra of the upper triangle $2\times 2$-matrices.


\medskip

{\bf Example 2. Natural equivalence problems for systems of two differential equations of mixed order.}
Given two natural numbers $k$ and $l$ and an integer $s$ satisfying $0\leq s\leq l$ consider the following system of two differential equations:

\begin{equation}
\label{ODE2}
\left\{\begin{array}{l}y^{(k)}(x) = f_1\bigl(x,y(x),y'(x),\dots,y^{(k-1)}(x),z(x),z'(x),\ldots,z^{(\min\{l-1,l-s\})}(x)\bigr)\\
z^{(l)}(x) = f_2\bigl(x,y(x),y'(x),\dots,y^{(k-1)}(x),z(x),z'(x),\ldots,z^{(l-1)}(x)\bigr)
\end{array}\right.,
\end{equation}

Such system will be called a \emph{system of differential equations of mixed order $(k,l)$ with shift $s$}. Obviously, systems having a shift $s$ have a shift $s-1$ as well.  System \eqref{ODE2} defines a submanifold $\mathcal E_0$ in the mixed jet space $J^{(k,l)}(\mathbb R, \mathbb R^2)$: in the standard coordinates
$(x, y_0, y_1,\ldots, y_k, z_0, z_1,\ldots, z_l)$ in $J^{(k,l)}(\mathbb R, \mathbb R^2)$ the submanifold $\mathcal E_0$ has a form:
\begin{equation}
\label{ODE2jet}
\left\{\begin{array}{l}y_k = f_1(x,y_0,y_1,\dots,y_{k-1},z,z_1,\ldots,z_{\min\{l-1,l-s\}})\\
z_l = f_2(x,y_0,y_1,\dots,y_{k-1},z,z_1,\ldots,z_{l-1})
\end{array}\right.
\end{equation}

Differentiate both parts of the first equation of the system \eqref{ODE2} with respect to the independent variable $x$ and replace $z^{(l)}$, if it appears  in the obtained equation, by the right hand side of the second equation of \eqref{ODE2}. With this new equation, written in the first place,  we obtain a new system of three equations. We call this system of equation the \emph{first prolongation of the system  \eqref{ODE2} with respect to the first equation}. This new system  defines a submanifold $\mathcal E_1$ in the mixed jet space $J^{(k+1,l)}(\mathbb R, \mathbb R^2)$: to obtain this submanifold in the standard coordinates
$(x, y_0, y_1,\ldots, y_{k+1}, z_0, z_1,\ldots, z_l)$ in $J^{(k+1,l)}(\mathbb R, \mathbb R^2)$ we  replace $y^{(i)}$ by $y_i$ and $z^{(j)}$ by $z_j$ in the new system obtain. The submanifold $\mathcal E_1$  will be called  the \emph{first prolongation of the submanifold $\mathcal E_0$ with respect to the first equation of the system \eqref{ODE2}}.

Similarly,  differentiating the first equation of the new system once more  and replacing $z^{(l)}$, if it appears in the obtained equation, by the right hand side of the second equation of \eqref{ODE2}, we obtain, together with already existing equation, the system of $4$ equations, called  the \emph{second prolongation of the system  \eqref{ODE2} with respect to the first equation} and the submanifold $\mathcal E_2$ in the mixed jet space $J^{(k+2,l)}(\mathbb R, \mathbb R^2)$, called the \emph{second prolongation of the submanifold $\mathcal E_0$ with respect to the first equation of the system \eqref{ODE2}}. In the same way, by induction for any $s\geq 0$ we construct the system of $s+2$ equations and the submanifold $\mathcal E_s$ in the mixed jet space $J^{(k+s,l)}(\mathbb R, \mathbb R^2)$, which will be called the \emph{prolongation of order $s$ of the system \eqref{ODE2} and of the submanifold $\mathcal E_0$ with respect to the first equation of the system \eqref{ODE2}}.

Further, the Cartan distribution is defined on any mixed jet space $J^{(\bar k,\bar l)}(\mathbb R, \mathbb R^2)$. In standard coordinates
$(x, y_0,\ldots y_{\bar k},z_0,\ldots, z_{\bar l})$ in $J^{(\bar k,\bar l)}(\mathbb R, \mathbb R^2)$ the Cartan distribution is defined by the following system of Pfaffian equations:
\begin{align*}
& dy_i - y_{i+1}dx, i= 0,\dots,\bar k-1;\\
& dz_j - z_{j+1}dx, j= 0,\dots,\bar l-1.
\end{align*}

\begin{definition}
Consider two systems of differential equations of mixed order $(k,l)$ with shift $s$. Assume that $\mathcal E_0$ and $\widetilde{\mathcal E}_0$ are the corresponding  submanifolds in the mixed jet space $J^{(k,l)}(\mathbb R, \mathbb R^2)$ and $\mathcal E_s$ and $\widetilde{\mathcal E}_s$ are the prolongations of order $s$ of the submanifold $\mathcal E_0$  and $\widetilde{\mathcal E}_0$ with respect to the first equation of the corresponding systems. We say that our systems (or submanifolds $\mathcal E_0$ and $\widetilde{\mathcal E}_0$) are $s$-equivalent (with respect to the first equation) if
\begin{enumerate}
\item
in the case $0\leq s\leq l-k$ there exists a diffeomorphism $\Phi$ of the mixed jet space $J^{(0, l-k-s)}(\mathbb R, \mathbb R^2)$ preserving the Cartan distribution on  it such that the prolongation of $\Phi$ to the mixed jet space $J^{(k+s, l)}(\mathbb R, \mathbb R^2)$ sends $\mathcal E_s$ onto $\widetilde{\mathcal E}_s$.

\item
in the case $l-k<s\leq l$ there exists a diffeomorphism $\Phi$ of the mixed jet space $J^{(k+s-l, 0)}(\mathbb R, \mathbb R^2)$ preserving the Cartan distribution on it
such that the prolongation of $\Phi$ to the mixed jet space $J^{(k+s, l)}(\mathbb R, \mathbb R^2)$ sends $\mathcal E_s$ onto $\widetilde{\mathcal E}_s$.
\end{enumerate}
\end{definition}

Note that given $s_1$ and $s_2$ with $0\leq s_1<s_2\leq l$  the $s_1$-equivalence and the $s_2$-equivalence are in general two different equivalence problems on the set of systems of mixed type $(k,l)$ with shift $s_2$ (which is the common set of objects for which both equivalence problems are defined). Symmetries of the trivial system of the mixed order $(k,l)$, namely of the system $y^{(k)}=0,\, z^{(l)}=0$, with respect to the $s$-equivalence  are calculated in the recent preprint \cite{mix1}.
In particular, it was shown that in the case $(k,l)=(2,3)$ the groups of symmetries for $s=0$ and $s=1$ are both $15$-dimensional but not isomorphic one to another.
By symmetries with respect to the $s$-equivalence of a system of mixed order $(k,l)$ corresponding to a submanifold $\mathcal E_0$ of  the mixed jet space $J^{(k,l)}(\mathbb R, \mathbb R^2)$ we mean any diffeomorphism $\Phi$ of either the mixed jet space $J^{(0, l-k-s)}(\mathbb R, \mathbb R^2)$ or the mixed jet space $J^{(k+s-l, 0)}(\mathbb R, \mathbb R^2)$ (depending on the sign of $l-k-s$) such that the prolongation of $\Phi$ to the mixed jet space $J^{(k+s, l)}(\mathbb R, \mathbb R^2)$ preserves the prolongations  $\mathcal E_s$ of order $s$ of $\mathcal E_0$  with respect to the first equation of the  systems. Note also that if $s=l-k\geq 0$, the $s$-equivalence is nothing but the equivalence of the $s$-prolongation $\mathcal E_s$ of $\mathcal E_0$ with respect to the first equation of the system up to point transformations.

Now let us show that the introduced $s$-equivalence is a particular case of the equivalence problems introduced in the beginning of this section.
First, system \eqref{ODE2} defines  a rank 1 distribution $\mathcal X_{k,l,s}$ on $\mathcal E_s$: its leaves are prolongations of the solutions $\bigl(y(x), z(x)\bigr)$ to the mixed jet space $J^{(k+s,l)}(\mathbb R, \mathbb R^2)$. It can be equivalently defined
as the rank 1 distribution obtained by the intersection of the Cartan distribution on $J^{(k+s,l)}(\mathbb R, \mathbb R^2)$ with the tangent space to $\E_s$ at every point of $\E_s$.

Let $\pi_{s,i}\colon J^{(k+s,l)}(\R,\R^2)\to J^{(k+s-1-i,l-1-i)}(\R,\R^2)$ be the canonical projections , and let
\begin{equation}
\label{Vs}
V_{s,i}(\varepsilon)=\ker d_\varepsilon\pi_{s,i}\cap T_\varepsilon \E_s.
\end{equation}
Here $0\leq i\leq \min\{k+s-1,l-1\}$.
In the coordinates $(x,y_0,\ldots, y_{k-1},z_0,\ldots, z_{l-1})$ on $\E_s$ we have $V_{s,0}=0$ and

\begin{equation}
\label{Vscoord}
V_{s,i}=\begin{cases}\left\langle \bigl\{\frac{\partial}{\partial z_j}\bigr\}_{j=l-i}^{l-1} \right\rangle & 1\leq i\leq s\\
\left\langle \bigl\{\frac{\partial}{\partial y_j}\bigr\}_{j=k-i+s}^{k-1}, \bigl\{\frac{\partial}{\partial z_j}\bigr\}_{j=l-i}^{l-1} \right\rangle &
s<i\leq \min\{k+s-1,l-1\}
\end{cases}.
\end{equation}
We also assume that $V_{s,i}=0$ for $i\leq 0$, while for $i> \min\{k+s-1,l-1\}$ we define $V_{s,i}$ inductively by $V_{s,i}=[\mathcal X_{k,l,s}, V_{s,i-1}]$.
Then the tuple of distributions $(\widetilde \Delta, \mathcal C, \{\mathcal V_i\}_{i\in \mathbb Z})$ with $\widetilde \Delta=T\mathcal E_s$, $\mathcal C=\mathcal X_{k,l,s}$, and $\mathcal V_i=V_{s, -i}$ satisfies properties (P1)-(P4) above.
We say that this tuple of distributions is \emph{associated with the system of  differential equation of mixed order \eqref{ODE} with respect to the $s$-equivalence}.
Using \eqref{Vscoord}, it is not hard to show  that  it satisfies the property (P3$\,'$) for any $i_0$ satisfying
$s< i_0\leq \min\{k+s-1,l-1\}$ and even the property (P3$\,''$) for $i_0=\min\{k+s-1,l-1\}$.
The latter implies that two systems of mixed order  $(k,l)$ with shift $s$ are $s$-equivalent if and only if the tuples of distributions associated with these equations are equivalent.




From \eqref{Vscoord} it follows easily that the curves of flags $\mathcal Y^\gamma$, which are the linearizations of the sequence of distributions $\{\mathcal V_i\}_{i\in \mathbb Z}$ along the foliation ${\rm Fol}(\mathcal X_{k,l,s})$ along leaves $\gamma$, have the same symbol at any point.
Let us describe this symbol. For this let us introduce some notation, following \cite[subsection 7.1]{flag2}.
Let $\delta_1$ and $\delta_2$ be degree $-1$ endomorphisms of the graded spaces $V_1$ and $V_2$, respectively. The direct sum $V_1\oplus V_2$ is equipped with  the natural grading such that its $i$th component is the direct sum of $i$th components of $V_1$ and $V_2$.
The direct sum $\delta_1\oplus\delta_2$ is the degree $-1$ endomorphism of $V_1\oplus V_2$ such that the restriction of it to $V_i$ is equal to $\delta_i$ for each $i=1,2$.
A degree $-1$ endomorphism $\delta$ of a graded space $V$ is called \emph{indecomposable} if
it cannot be represented as a direct sum of two degree $-1$ endomorphisms acting on nonzero graded spaces.
Further, given two integers $r\leq p<0$ let $V_{rp}=\displaystyle{\bigoplus_{i=r}^pE_i}$, where $\dim E_i=1$ for every $i$, $r\leq i\leq p$, and let $\delta_{rp}$ be the degree $-1$ endomorphism of $V_{rp}$ which sends $E_i$ onto $E_{i-1}$ for every $i$, $r<i\leq p$, and sends $E_r$ to $0$.
For example in this notation the symbol $\delta^1_n$ appearing in the previous example is equal to $\mathbb R \delta_{-n, -1}$. As was shown in \cite[subsection 7.1]{flag2} the endomorphisms $\delta_{rs}$ are the only, up to a conjugation, indecomposable degree $-1$ endomorphism of a graded space  of curve of flags with respect to the General Linear group. In other words, any degree $-1$ endomorphism of a graded space is  a direct sum of endomorphisms of type $\delta_{rp}$. In a similar way one define the direct sum for symbols and the notion of indecomposable symbol with respect to the General Linear group.

Using \eqref{Vscoord}, it is easy to see that the symbol in the current example is isomorphic to the line generated by $$\delta_{-l, -1}\oplus\delta_{-s-k,-s-1}.$$ The universal algebraic prolongations of all possible symbols of curves of flags with respect to the General Linear group was
calculated in \cite[section8.2]{flag2} using the representation theory of $\mathfrak{sl}_2$. The Tanaka universal algebraic prolongation $\mathfrak u\Bigl(\mathbb R^{k+l}, u^F(\mathbb R(\delta_{-l, -1}\oplus\delta_{-s-k,-s-1})\bigr)\Bigr )$ of the pair $(\mathbb R^{k+l}, u^F\bigl(\mathbb R(\delta_{-l, -1}\oplus\delta_{-s-k,-s-1})\bigr)$ is calculated in \cite{mix1}, using the fact that it is isomorphic to the algebra of infinitesimal symmetries of the trivial system of the mixed order $(k,l)$, namely of the system $y^{k}=0,\,z^{(l)}=0$, with respect to the $s$-equivalence. In \cite[section 4.1]{flag1} it was shown that the symbol of curves of flags in the case $k=2$, $l=3$, and $s=0$ are not nice. We expect that this is the case also for all symbols with $k<l$ and $s>0$.

One can consider more general systems of $m$  differential equations of mixed order and $m-1$ shifts (or, shortly, a given multishift) such that this multishift has some natural restrictions. Then one can define a natural equivalence relation for such systems according to this multishift. Passing to the corresponding flag structures, one obtains that the curves of flags have the symbol $\delta$ which is a direct sum of $m$ indecomposable symbols of the type.
Such equivalence problems and the corresponding Tanaka universal prolongations from Theorem \ref{main2} will be described in \cite{mix2}.

\medskip

{\bf Example 3. Geometry of distribution via abnormal extremals.}
Consider the problem of equivalence of bracket generating distributions of a given rank 
with respect to the action of the group of diffeomorphisms of the ambient manifold. The classical approach to this problem is provided by Tanaka theory \cite{aleks, tan, zeltan} described shortly in subsection \ref{quasiprosec} above. Assume that $D$ is a distribution with constant Tanaka symbol $\mathfrak t$ . Consider the corresponding bundle $P^0(\mathfrak t)$ (as defined in subsection \ref{prelim} above).  Then the construction of a canonical frame for $D$ is given by Theorem \ref{tantheor} (with $\mathfrak m=\mathfrak t$ and $\mg^0=\mg^0(\mathfrak t)$).

However, in order to apply the Tanaka machinery
to all bracket-generating distributions of the given rank $l$ on a manifold of the given dimension $n$, one has to classify all $n$-dimensional graded nilpotent Lie algebra with $l$ generators and also one has to generalize the Tanaka prolongation procedure to distributions with non-constant symbol, because the set of all possible symbols may contain moduli (may depend on continuous parameters).
Note that the classification
of all symbols (graded nilpotent Lie algebras) is a quite
nontrivial problem  already for $n=7$ (see \cite{kuz}) and it looks completely
hopeless for arbitrary dimensions.

In a series of papers \cite{doubzel1, doubzel2} and the preprint \cite{doubzel3} we proposed an alternative approach which allows to avoid a classification of the Tanaka symbols. It is based on the ideas of the geometric control
theory and leads, after a symplectification of the problem,  to the equivalence problems for a particular class of structures discussed in the beginning of this section. By symplectification we mean the
lifting of the original
distribution to the cotangent bundle.

In more detail, let $D$ be a distribution on a manifold $M$. First, under some natural generic assumptions we distinguish a characteristic $1$-
foliation
(the foliation of abnormal extremals) on a special odd-dimensional submanifold
of the cotangent bundle associated with $D$

For this first introduce some notations.
Taking Lie brackets of vector fields tangent to a distribution $D$ (i.e. sections of $D$)   one can define a filtration $D^{-1}\subset D^{-2}\subset\ldots$
of the tangent bundle, called a \emph{weak derived flag}  or a \emph{small flag (of $D$)}. More precisely,
set
$D^{-1}=D$ and define recursively $D^{-j}=D^{-j}+[D,D^{-j+1}]$.
We
assume that all $D^{j}$ are subbundles of the tangent bundle. Denote by
$(D^j)^{\perp}\subset T^*M$ the annihilator of $D^j$, namely
\[
(D^j)^{\perp}=\{(p,q)\in T_q^*M\mid p\cdot v=0\quad\forall\,v\in D^j(q)\}.
\]
Let $\pi\colon T^*M\mapsto M$ be the canonical projection. For any $\lambda\in
T^*M$, $\lambda=(p,q)$, $q\in M$, $p\in T_q^*M$, let
$\varsigma(\lambda)(\cdot)=p(\pi_*\cdot)$ be the canonical Liouville form
and $\hat\sigma=d\varsigma$ be the standard symplectic structure on $T^*M$.

The crucial notion in the symplectification procedure  of
distributions is the notion of \emph{an abnormal extremal}. An \emph{unparametrized} Lipshitzian curve in $D^\perp$ is called
\emph{abnormal extremal of a distribution $D$} if the tangent line to it at
almost every point belongs to the kernel of the restriction
$\hat\sigma|_{D^\perp}$ of $\hat\sigma$ to $D^\perp$ at this point. As explained in the Introduction the term
``abnormal extremals'' comes from Optimal Control Theory: abnormal extremals of
$D$ are exactly Pontryagin extremals with zero Lagrange multiplier near the
functional for any variational problem with constrains, given by the
distribution $D$.

Now let us describe the submanifold of $T^*M$ foliated by the abnormal extremals.
First set
\begin{equation}
\label{tildeWD}
\widetilde W_D:=\{\lambda\in D^\perp:{\rm ker}\bigl(\hat\sigma|_{D^\perp}(\lambda)\bigr)\neq 0\}
\end{equation}
Then
\begin{enumerate}
\item
If ${\rm rank}\, D$ is odd, then $\widetilde W_D=D^\perp$, because a skew-symmetric form in an odd dimensional vector space has nontrivial kernel;
\item
If ${\rm rank}\, D=2$, then it is easy to show \cite[Proposition 2.2]{zelrigid} that $\widetilde W_D=(D^{-2})^{\perp}$;
\item
More generally, if ${\rm rank}\, D=2k$ then  the intersection of $\widetilde W_D$ with the fiber $D^\perp(q)$ of $D^{\perp}$ over a point $q\in M$
is a zero level set of a certain homogeneous polynomial of degree $k$ on $D^\perp(q)$. This polynomial at a point $\lambda=(p, q)\in D^\perp$ is the  Pfaffian of the so-called Goh matrix at $\lambda$: if $X_1,\ldots X_k$ is a local basis of the distribution $D$, then the Goh matrix at $\lambda$ (w.r.t. this basis) is the matrix $\bigl(p\cdot[X_i,X_j](q)\bigr)_{i,j=1}^{2k}$.
\end{enumerate}

In the sequel, for simplicity, we will assume that ${\rm rank}\, D$ is odd or ${\rm rank}\, D=2$.
In both cases $\widetilde W_D$ is an odd dimension submanifold.
Therefore ${\rm ker}\bigl(\hat\sigma|_{\widetilde W_D}(\lambda)\bigr)$ is nontrivial.
Define a subset $W_D$ of $\widetilde W_D$ as follows:

\begin{equation}
\label{tildeWD}
W_D:=\{\lambda\in \widetilde W_D:{\rm ker}\bigl(\hat\sigma|_{\widetilde W_D}(\lambda)\bigr) \text{ is one-dimensional}\}.
\end{equation}
By direct calculation one can show that
\begin{enumerate}
\item If ${\rm rank}\, D=2$, then $W_D=(D^{-2})^\perp\backslash (D^{-3})^\perp$ (\cite[section 2]{zelrigid});
\item If ${\rm rank}\, D=3$, then $W_D=D^\perp\backslash (D^{-2})^\perp$ (\cite{doubzel3}).
\end{enumerate}
Consequently, for any bracket generating rank $2$ or rank $3$ distribution on a manifold $M$ with
$\dim M\geq 4$ the set $W_D$ is an open and dense subset of $\widetilde W_D$.
In the sequel we will assume that the set $W_D$ is an open and dense subset of $\widetilde W_D$. Note that this is a generic assumption for distributions of odd rank greater than $3$. See also Remark \ref{WDempt} below addressing the case when this assumptions does not hold.

By constructions, the kernels of $\hat\sigma|_{W_D}$ form the \emph{characteristic rank $1$ distribution} $\widehat{\mathcal A}$ on $W_D$.
The integral curves  of this distribution are abnormal extremals of the distribution $D$.
Note that in general, these are not all abnormal extremals of $D$, however for our purposes it is enough to work with these abnormal extremals only.

Further, it is more convenient to work with the projectivization of  $\mathbb PT^*M$ rather than with $T^*M$.
Here $\mathbb PT^*M$ is the fiber bundle over $M$ with the fibers that are the projectivizations of the fibers of $T^*M$.
The canonical projection $\Pi\colon T^*M\rightarrow \mathbb PT^*M$ sends the characteristic distribution
$\widehat{\mathcal A}$ on $W_D$ to the line distribution $\mathcal A$ on
$\mathbb P W_D$ ($:=\Pi(W_D)$), which will be also called the \emph{characteristic distribution}
of the latter manifold.

Further note that the corank 1 distribution on $T^*M
$ annihilating the tautological Liouville form $\varsigma$ on $T^*M$ induces a contact distribution on $\mathbb P T^*M$, which in turns induces the even-contact (quasi-contact) distribution $\widetilde\Delta$
on $\mathbb P(D^2)^\perp\backslash \mathbb P(D^3)^\perp$. The characteristic line distribution $\mathcal A$ is exactly the Cauchy characteristic distribution of $\widetilde\Delta$, i.e. it is the maximal subdistribution of $\widetilde\Delta$ such that
\begin{equation}
\label{Cauchy}
[\mathcal A,\widetilde \Delta]\subset \widetilde \Delta.
\end{equation}
%

Further, let $\tilde \pi:\mathbb P T^*M\mapsto M$ be the canonical projection. Let $\mathcal J$ be the pullback of the original distribution $D$ to $\mathbb PW_D$ by the canonical projection $\tilde\pi$:
\begin{equation}
\label{pullJ}
\mathcal J(\lambda)=\{v\in T_\lambda \mathbb P W_D: \tilde\pi_* v\in D(\tilde\pi(\lambda))\}
\end{equation}
and $V$  be the tangent space to the fibers of $\mathcal W_D$
\begin{equation}
V(\lambda)=\{v\in T_\lambda \mathbb P W_D: \tilde\pi_* v=0\}.
\end{equation}
Note that $V+\mathcal A\subset \mathcal J$. We work with the distributions $\mathcal A$, $V$, and $\mathcal J$ instead of the original distribution $D$.

Now define a sequence of subspaces ${\mathcal
J}^{i}(\lambda)$, $\lambda\in \mathbb P W_D$, by the following recursive formulas for $i<0$:
\begin{equation}
\label{Jidef}
{\mathcal J}_{i-1}(\lambda):=[\mathcal A, \mathcal J_{i}](\lambda), \quad \mathcal J_{-1}(\lambda)= \mathcal J(\lambda)
\end{equation}

Directly from the fact that $\mathcal A$ is a line distribution it follows that
\begin{equation}
\label{J-2-J-1}
\dim {\mathcal J}_{-2}(\lambda)-{\mathcal J}_{-1}(\lambda)\leq {\rm rank} D-1.
\end{equation}

From \eqref{Cauchy} it follows that ${\mathcal
J}_i\subset \widetilde\Delta$ for all
$i<0$.
Note that the symplectic form $\hat\sigma$ induces the
antisymmetric form $\widetilde \sigma$ on each subspace of a distribution $\widetilde\Delta$,
defined up to a multiplication by a constant, and
$\mathcal A$ is exactly the distribution of kernels of this form.

Given a subspace $\Lambda$ of $\widetilde\Delta(\lambda)$ denote by $\Lambda^\angle$ the skew-symmetric complement of $W$ with respect to this form.
It is easy to show that the spaces $\mathcal J(\lambda)$ are \emph{coisotropic} with respect to the form $\widetilde\sigma$, i.e. they contain their skew symmetric complement: $\mathcal J(\lambda)^\angle\subseteq \mathcal J(\lambda)$. Moreover, if $\rm rank\, D=2$ then in fact $\mathcal J(\lambda)^\angle= \mathcal J(\lambda)$. Using the operation of skew-symmetric complement we can define the subspaces $\mathcal J_i(\lambda)$ for $i\geq 0$ as follows

\begin{equation}
\label{Jskewcomp}
\mathcal J_i(\lambda)=\begin{cases}
\bigl(\mathcal J_{-i-1}(\lambda)\bigr)^\angle\cap V(\lambda), & {\rm rank}\, D=2,\\
\bigl(\mathcal J_{-i-2}(\lambda)\bigr)^\angle\cap V(\lambda), & {\rm rank}\, D \text{ is odd }.
\end{cases}
\end{equation}

Note also that if ${\rm rank}\, D$ is odd,  then $\mathcal J(\lambda)^\angle=V(\lambda)+\mathcal A(\lambda)$, which implies that in this case

\begin{equation}
\label{J0V}
\mathcal J_0(\lambda)= V(\lambda).
\end{equation}
Similarly, if ${\rm rank}\, D=2$ and $\dim D^2(q)=3$, then
$\mathcal J_{-2}(\lambda)^\angle=V(\lambda)+\mathcal A(\lambda)$ for any $\lambda\in \mathbb P W_D$ with $\pi(\lambda)=q$, so formula \eqref{J0V}
holds in this case as well. Besides, it is easy to see that $[\mathcal A, \mathcal J_i]\subseteq \mathcal J_{i-1}+\mathcal A$ also for
nonnegative $i$.

Further, for a generic point $q\in M$ there is a neighborhood $U$  and an open and dense subset $\widehat U$ of $\pi^{-1}(U)\cap \mathbb P W_D$ such that for any $i\in \Z$
$\dim \mathcal J_i(\lambda)$ is the same for all $\lambda \in\widehat U$.  Then the tuple of distributions $(\widetilde \Delta, \mathcal C, \{\mathcal V_i\}_{i\in \mathbb Z})$ on $\widehat U$ with $\mathcal C=\mathcal A$ and $\mathcal V_i=\mathcal J_i$ satisfies properties (P1)-(P4) above. We will say that this tuple is \emph{associated with the distribution $D$} by the symplectification. From our constructions and formula \eqref{J0V} it follows immediately that two distributions are  equivalent if and only if the corresponding tuples of distributions associated by symplectification are equivalent.

Moreover, in most of the situations $V$ is an integral  subdistribution of $V+\mathcal A$ of maximal rank. The latter condition implies that the tuple
$(\widetilde \Delta, \mathcal A, \{\mathcal J_i\}_{i\in \mathbb Z})$ is recoverable by the linearization procedure.
The linearization $\mathcal Y^\gamma$ of the sequence of distributions $\{\mathcal J_i\}_{i\in Z}$ along the foliation $\Fol (\mathcal A)$ of abnormal extremals  at a leaf (an abnormal extremal)  $\gamma$ is called the \emph{Jacobi curve of the abnormal extremal $\gamma$.} From \eqref{Jidef}, \eqref{Jskewcomp}, and Remark \ref{compgradrem} it follows that the curves of flags satisfy conditions (F1)-(F3) of subsection \ref{quasiflag}.

Let $\delta$ be a symbol of a curve of symplectic flags or, shortly, a symplectic symbol. A point $\lambda\in \mathbb P W_D$ is called \emph{$\delta$- regular} if there is a neighborhood $\widehat U$ of $\lambda$ in $\mathbb P W_D$ such that for any $\tilde\lambda\in \widehat U$ if $\tilde \gamma$ is the abnormal extremal passing through  $\widetilde \lambda$, then $\delta$ is the symbol of the curve $\mathcal Y^{\tilde\gamma}$ at $\tilde\lambda$. Note that by constructions the line distribution $\mathcal A$ depends algebraically on the fibers. It turns out (see \cite{jacsymb} for detail) that from this, the fact that the set of all symplectic symbols of Jacobi curves is discrete and from the classification of these symbols given in \cite[subsection 7.2]{flag2} it follows that for distributions of rank $2$ or of odd rank (and for distributions of any rank if we work over $\mathbb C$) for a generic point $q\in M$ there is a neighborhood $U$ of $q$ in $M$ and a symplectic symbol $\delta$ such that any point from a generic subset of $\tilde \pi^{-1}(U)\cap \mathbb PW_D$ is $\delta$-regular. Moreover for a generic point  $\tilde q\in U$ the set of all $\delta$-regular points in  $\tilde \pi^{-1}(\tilde q)\cap \mathbb PW_D$ is Zariski open.
We call the symbol $\delta$ the \emph{Jacobi symbol of the distribution $D$ at the point $q$}

Now the problem is \emph{to construct the canonical frames uniformly for all distributions with given Jacobi symbol $\delta$}.
If we apply the linearization procedure to the tuple $(\widetilde \Delta, \mathcal A, \{\mathcal J_i\}_{i\in \mathbb Z})$ on the set $\tilde \pi^{-1}(U)\cap \mathbb PW_D$ then the resulting flag structure $\bigl(\Delta, \{\mathcal Y^\gamma\}_{\gamma \in\mathcal S}\bigr)$ has the constant flag symbol $\delta$ at generic points of the curve $\mathcal Y^\gamma$ of a generic abnormal extremal $\gamma$. Despite this property is weaker than the constancy of the flag symbol, the conclusion of Theorem \ref{cor1} still holds true if we restrict ourselves to the points of the curves $\mathcal Y^\gamma$, where the flag symbol is isomorphic to $\delta$. Recall also that the symbol of $\Delta$ in this case is isomorphic to the Heisenberg algebra $\mathfrak n$ of the appropriate dimension with the grading $\mg^{-1}\oplus\mg^{-2}$, where $\mg^{-2}$ is the center. Therefore, by Theorem \ref{main2} the construction of the canonical frame for distributions with given Jacobi symbol $\delta$ is reduced to the calculation of the algebra $\mathfrak {u} \bigl(\mathfrak n, \mathfrak u^F(\delta)\bigr)$. A natural generic subclass of distributions are distributions of the so-called \emph{maximal class} \cite{doubzel1}-\cite{doubzel3}.
A distribution is called of maximal class
if all curves $\mathcal Y_{-1}^\gamma$ do not lie in a proper subspace of $\Delta(\gamma)$ for generic abnormal extremal $\gamma$. Obviously this property is encoded in the Jacobi symbol.

\begin{remark}
\label{WDempt}
 {\rm The scheme described above will work also for distributions for which the set $W_D$ is empty. Indeed, in this case instead of $W_D$ one can consider a subset of $\widetilde W_D$ for which $\dim\,{\rm ker}\bigl(\hat\sigma|_{\widetilde W_D}(\lambda)$ attains its infimum. Clearly it is an open set. Restricting on this set we still have an integrable distribution $\mathcal A$ of this kernels (of rank greater than $1$) and we can proceed with the linearization procedure as well, getting as $\mathcal Y^\gamma$ submanifolds of dimension $>1$ instead of course. Although, in this case the assumption of constancy of symbol is not automatical, Theorem \ref{main2} still can be applied under this constancy of symbol assumption.}$\Box$
\end{remark}

{\bf (a) The case of rank $2$ distributions.}
Let us describe the algebra $\mathfrak {u} \bigl(\mathfrak n, \mathfrak u^F(\delta)\bigr)$ for rank $2$ distributions of the maximal class.
Using \eqref{J-2-J-1} it is easy to show that in the case of rank $2$ distributions the condition of the maximality of class is equivalent to the fact that the flag $\mathcal Y^\gamma(\lambda)$ is a complete flag for a generic point $\lambda$ on the curve $\gamma$ \cite{doubzel2}. If we let $\dim M=n$, $n\geq 4$, the latter is also equivalent to the fact that $\dim \mathcal Y^\gamma_{n-5}(\lambda)=1$ for a generic point $\lambda$ on the curve $\gamma$. Moreover, the whole curve  $\mathcal Y^\gamma$  at generic points can be completely recovered  by osculations by the curve $\mathcal Y^\gamma_{n-5}$ in the corresponding projective space (of the dimension $2n-7$)

From this prospective, the equivalence problem for rank $2$ distributions is similar  to the contact equivalence of scalar ordinary differential equations. The only difference is that for distributions there is an underlined  (conformal) symplectic structure on $\Delta(\gamma)$.
In particular, the curves $\mathcal Y^\gamma_{n-5}$ are not arbitrary curves  in the projective space of $\Delta(\gamma)$, but they satisfy the following property: the curve of complete flags obtained from them by the  osculation must consist of symplectic flags. Such curves in a projective space are also called self-dual 
\cite{var}.

The important point is that for a given $n$ there exists the unique Jacobi symbol of rank $2$ distributions of maximal class. To describe it given
a natural  $m$ let
\begin{equation}
\label{Lm}
\mathcal L_m=\displaystyle{\bigoplus_{-m-1\leq i\leq m-2}E_i}
\end{equation}
be a graded spaces endowed with a symplectic form $\omega$, defined up to a multiplication by a nonzero constant, such that $\dim E_i=1$ for every admissible $i$ and $E_i$ is skew orthogonal to $E_j$ for all $i+j\neq -3$.    Let $\tau_m$ be a degree $-1$ endomorphism of $\mathcal L_m$ from the symplectic algebra which sends $E_i$ onto $E_{i-1}$ for every admissible $i$, except  $i=-m-1$, and  $\tau_m(E_{-m-1})=0$. We also assume that $\omega(\tau_m (e), e)\geq 0$ for all $e\in E_{-1}$. Then, by our constructions, for a given $n$ the unique Jacobi symbol of rank $2$ distributions of maximal class is isomorphic to the line generated by $\tau_{n-3}$.

Disregarding the underlying conformal symplectic structure on $\mg^{-1}$,  and up to a shift in the chosen weight of the grading, this Jacobi symbol is the same as the symbol of a scalar ordinary differential equation of order $2n-6$, i.e. $\delta^1_{2n-6}$ in the notations of Example 1. Note that in the notation of Example 2 and again disregarding the underlying conformal symplectic structure it is exactly $\mathbb R\delta_{2-n, n-5}$.

It is not clear yet if the assumption of maximality of class is restrictive.
We checked by direct computations that for $n\leq 8$ all bracket generating rank $2$ distributions with small growth vector
$(2,3,5,\ldots)$ are of maximal class.
Actually we do not have any example of
bracket generating rank $2$ distributions with small growth vector
$(2,3,5,\ldots)$  which are not of maximal class.
Comparing this to the set of Tanaka symbols, for rank $2$ distributions with five dimensional cube
if $n=6$ there are $3$ non-isomorphic  Tanaka symbol, if $n=7$ there are  $8$ non-isomorphic Tanaka symbols, and the
continuous parameters in the set of all Tanaka symbols appear starting from dimension $8$.
\emph{Since by above, all such distributions with given $n$, at least for $n\leq 8$, are of maximal class and therefore have the same Jacobi symbols,
it already shows that starting with the Jacobi symbol
instead of Tanaka symbols as a basic characteristic of rank $2$ distributions, we get much more uniform construction of the canonical frames.}

Further, similar to the case of scalar ordinary differential equations (of order $2n-6$),
the corresponding flat curve is a curve of complete flags,
consisting of all osculating subspaces of the rational normal curve
in a $(2n-7)$-dimensional projective space and the universal algebraic prolongation  $\mathfrak u^F(\tau_{n-3})$ of the Jacobi symbol of $\tau_{n-3}$ is  equal to the image of the irreducible
embedding of $\gl(2,\mathbb R)$ into $\mathfrak {csp}(\mg^{-1})$ (after identifying the graded  symplectic spaces $\mg^{-1}$ and $\mathcal L_{n-3}$).
The Tanaka universal prolongation $\mathfrak u\bigl(\mathfrak n, \mathfrak u(\tau_{n-3})\bigr)$ of the pair $\bigl(\mathfrak n, \mathfrak u(\tau_{n-3})\bigr)$ is equal to the split real form of the exceptional simple Lie algebra $G_2$ for
$n=5$, as expected from the classical Cartan work \cite{cartan}, and to the semidirect sum $\gl(2,\mathbb R)\leftthreetimes\mathfrak n$
for $n>5$ , i.e. already the first algebraic prolongation $\mathfrak u\bigl(\mathfrak n, \mathfrak u(\tau_{n-3})\bigr)$ vanishes in this case, as expected from \cite{doubzel1, doubzel2}.
 Here in the semidirect sum of $\gl_2(\mathbb R)$ and $\mathfrak n=\mg^{-1}\oplus \mg^{-2}$ the algebra  $\gl_2(\mathbb R)$ acts irreducibly on $\mg^{-1}$. Note that in the case $n=4$ the local equivalence problem for generic bracket generating rank $2$ distribution is trivial: any two generic germ of rank $2$ distribution in $\mathbb R^4$ are equivalent (the Engel normal form). In this case  the algebra $ \mathfrak n\oplus\mathfrak u(\tau_1)$ is infinite dimensional and its completion is isomorphic to the algebra of formal Taylor series of contact transformations of the jet space $J^1(\mathbb R,\mathbb R)$.

Further, 
similarly to the case of ordinary differential equations the Jacobi symbol $\tau_{n-1}$ is nice in the sense of Remark \ref{nice}.
In this way, as in Example 1, the Wilczynski invariants can be computed for each curve $\mathcal Y^\gamma$ and they produce
the invariants of the original rank $2$ distribution, that by analogy with \cite{doub3} can be called  the \emph{generalized Wilczinski invariants of the rank $2$ distribution}. Note that the self-duality of the curves in a projective space generating the Jacobi curves implies that some Wilczinski invariants, namely the Wilczinski invariants of odd order, vanish  automatically. The minimal order of possibly nonzero Wilczinski invariants is equal to $4$. As shown in \cite{zelcart}, in the case $n=5$ the Wilczinski invariants of order $4$ (which is the only invariant of the Jacobi curves in this case) coincides with the so-called fundamental tensor of
rank $2$ distributions in 5 dimensional manifold discovered by E. Cartan \cite{cartan}.

Finally, it can be shown that the normalization condition for Theorem \ref{cor1} can be chosen such that the resulting canonical frame
will be a Cartan connection over $W_D$ ($=(D^{-2})^\perp\backslash (D^{-3})^\perp$) modeled by the corresponding  $G_2$ parabolic geometry in the case of $n=5$ and  by a homogeneous space corresponding to a pair of Lie algebras  $\gl_2(\mathbb R)\leftthreetimes\mathfrak n$ and $\mathfrak t_2(\mathbb R)$ if $n>5$, where $\mathfrak t_2(\mathbb R)$ is the algebra of the upper triangle $2\times 2$-matrices.

\medskip

{\bf (b) The case of rank 3 distributions.}
Using \eqref{J-2-J-1} and disregarding for a moment the underlying conformal symplectic structure on $\mg^{-1}$ it is easy to show that the Jacobi symbol
of a rank $3$ distribution of maximal class with $6$-dimensional square is a direct sum of two indecomposable symbols with respect to the General Linear group (see \cite[section 2]{doubzel3} where it is formulated in different terms using Young diagrams instead of Jacobi symbols). From this prospective, the equivalence problem for rank $3$ distributions is similar  to the $s$-equivalence of systems of two ordinary differential equation of mixed order for some shift $s$.

In the case of rank 3 distributions with Jacobi symbol $\delta$  the structure of
the Lie algebras $\mathfrak u^F(\delta)$ and $\mathfrak u\bigl(\mathfrak n, \mathfrak u^F(\delta)\bigr)$ are much more complicated.
In contrast to the case of rank 2 distribution, here the presence of the additional conformal symplectic structure is already important on the level of algebraic prolongation of the flag symbol. First a Jacobi symbol $\delta$ being decomposable with respect the General Linear group is symplectically indecomposable in the sense of \cite[subsection 7.2]{flag2} (it is a indecomposable symbol of the type (D1) there).
Second the universal algebraic universal prolongation $\mathfrak u^F(\delta)$ with respect to conformal symplectic group is different (strictly smaller) than such prolongation with respect to the General Linear group. Both  algebras $\mathfrak u^F(\delta)$ and $\mathfrak u\bigl(\mathfrak n, \mathfrak u^F(\delta)\bigr)$ for all possible Jacobi symbols of rank $3$ distributions were explicitly described \cite[section 5]{doubzel3} using the language of Algebraic Geometry , i.e.  in terms of certain polynomials vanishing on certain tangential variety of a rational normal curve in a projective space and its secants.

The algebraic prolongation $\mathfrak u^F(\delta)$ of any symbol $\delta$ of a curve of symplectic flags  (with respect to conformal symplectic group) was described in \cite[subsection 8.3]{flag2} using the representation theory of $\mathfrak{sl}_2$. The construction of canonical frames for distributions of arbitrary rank with a given Jacobi symbol $\delta$ and in particular the algebra $\mathfrak u\bigl(\mathfrak n, \mathfrak u^F(\delta)\bigr)$ will be studied in \cite{jacsymb}.


\medskip

{\bf Example 4. Geometry of sub-Riemannian, sub-Finslerian, and other structures on manifolds via normal extremals.}
As in subsection \ref{sRprelim}, let $\mathcal U$ be a submanifold of $TM$ transversal to the fibers and consider the time minimal problem associated with $\mathcal U$ .Let $H: T^*M\mapsto \mathbb R$ be the maximized Hamiltonian as in \eqref{maxH}. Assume that  it is well defined and smooth in an open domain $O\subset T^*M$ and that the corresponding $c$ level set  $\mathcal H_c$ (defined by \ref{Hc}) for
some $c>0$ (and therefore for any $c>0$ by homogeneity of $H$ on
each fiber of $T^*M$)  is nonempty and consists of
regular points of $H$ (for more general setting see \cite{jac1} or Remark \ref{nonmonotrem} below).

Further let $\mathcal H_c(q)=\mathcal H_c\cap T_q^*M$. $\mathcal
H_c(q)$ is a codimension 1 submanifold of $T^*_qM$.  For any
$\lambda\in \mathcal H_c$ denote $\Pi(\lambda)=T_\lambda
\bigr(\mathcal H_c(\pi(\lambda))\bigr)$, where $\pi:T^*M\mapsto M$
is the canonical projection. Actually $\Pi(\lambda)$ is the vertical
subspace of $T_\lambda \mathcal H_c$,
\begin{equation}
\label{pilam} \Pi(\lambda)=\{\xi\in T_\lambda \mathcal
H_c:\pi_*(\xi)=0\}.
\end{equation}
Now define a sequence of subspaces $\Pi^{i}(\lambda)$, $\lambda\in \mathcal H_c$, by the following recursive formulas for $i<0$:
\begin{equation}
\label{Pidef}
{\Pi}_{i-1}(\lambda):=[\overrightarrow H, \Pi_{i}](\lambda), \quad \mathcal J_{-1}(\lambda)= \mathcal J(\lambda),
\end{equation}
where  $\overrightarrow H$ is the Hamiltonian vector field $\overrightarrow H$, corresponding to the Hamiltonian $H$.

Note that the symplectic form $\hat\sigma$ induces the
$2$-form $\widetilde \sigma$ on  $\mathcal H_c$ and
$\mathbb R \overrightarrow H$ is exactly the line distribution of kernels of this form.
Given a subspace $\Lambda$ of $\widetilde\Delta(\lambda)$ denote by $\Lambda^\angle$ the skew-symmetric complement of $W$ with respect to this form.
By constructions
$\Pi(\lambda)^\angle= \Pi(\lambda)$. Using the operation of skew-symmetric complement we can define the subspaces $\Pi_i(\lambda)$ for $i\geq 0$ as follows:

\begin{equation}
\label{Piskewcomp}
\Pi_i(\lambda)=
\bigl(\Pi_{-i-2}(\lambda)\bigr)^\angle\cap \Pi(\lambda).
\end{equation}

Further, for a generic point $q\in M$ there is a neighborhood $U$  and an open and dense subset $\widehat  U$ of $\pi^{-1}(U)\cap \mathcal H_c$ such that for any $i\in \Z$
$\dim \Pi_i(\lambda)$ is the same for all $\lambda \in\widehat U$.  Then the tuple of distributions $(\widetilde \Delta, \mathcal C, \{\mathcal V_i\}_{i\in \mathbb Z})$ on $\mathcal U$ with $\widetilde \Delta=T\mathcal H_c$, $\mathcal C=\mathbb R \overrightarrow H$, and $\mathcal V_i= \Pi_i$ satisfies properties (P1)-(P4). As a matter of fact, in contrast to our previous examples, the vector field $\overrightarrow H$ is distinguished on $\mathcal C$ so we are interested not in the equivalence problem of the tuple $(\widetilde \Delta, \mathcal C, \{\mathcal V_i\}_{i\in \mathbb Z})$ but in the equivalence problem of the tuple $(\widetilde \Delta, \overrightarrow H , \{\mathcal V_i\}_{i\in \mathbb Z})$
We will say that the tuple is \emph{associated with the geometric structure $\mathcal U$ by the symplectification}. Since the vertical distribution $\Pi$ is one of the elements of this tuple,  two geometric structures on $M$ are  equivalent if and only if the tuples associated with them by symplectification are equivalent. Besides, in most of the situations the tuple $(\widetilde \Delta, \overrightarrow H , \{\mathcal V_i\}_{i\in \mathbb Z})$ is recoverable from the corresponding flag structure (if one also take into account the distinguished parametrization on the curves of flags).

The linearization $\mathcal Y^\gamma$ of the sequence of distributions $\{\mathcal J_i\}_{i\in Z}$ along the foliation $\Fol (\mathcal \mathbb R  \overrightarrow H)$ of normal extremals  at a leaf (a normal extremal)  $\gamma$ is called the \emph{Jacobi curve of the normal extremal $\gamma$.} Since $\gamma$ is parameterized curve so that $\dot\gamma(t)=\overrightarrow H\bigl(\gamma(t)\bigr)$, the Jacobi curve  $\mathcal Y^\gamma$ is parameterized as well. From \eqref{Pidef}, \eqref{Piskewcomp}, and Remark \ref{compgradrem} it follows that the curves of flags satisfy conditions (F1)-(F3) of subsection \ref{quasiflag}.


\begin{remark}
\label{nonmonotrem}
{\rm The same scheme works in more general situation, when the maximized Hamiltonian is not defined (for example, for sub-pseudo-Riemannian structures, defined by a distribution $D$ and pseudo-Euclidean norms on each space $D(q)$). Assume that for some open subset $O\subset T^*M$ there exists a smooth map $u:O \mapsto \mathcal V$ such that for any $\lambda=(p,q)\in O$ the point $u(\lambda)$ is a critical point of a function $h_\lambda: \mathcal V_q\mapsto \mathbb R$, where $h_\lambda(v)\stackrel{def}{=}p(v)$. Define $\widetilde H(\lambda)=p\bigl(u(\lambda)\bigr)$. The function $\widetilde H$ is called a \emph {critical Hamiltonian} associated with the geometric structure $\mathcal U$ and one can make the same constructions as above with any critical Hamiltonian.}
\end{remark}

Further, similarly to Example 3, given a symbol $\delta$ of a parameterized curves of symplectic flags or, shortly, a parameterized symplectic symbol, one can define the notion of $\delta$-regular curve. In the case when $\mathcal U$ is a sub-Riemannian  or a sub-pseudo-Riemannian structure the set $\mathcal H_c(q)=\mathcal H_c\cap T^*_qM$ is a level set of a quadratic form and the corresponding Hamiltonian vector fields depends algebraically on the fibers. In these cases, similarly to Example 3,
for a generic point $q\in M$ there is a neighborhood $U$ of $q$ in $M$ and a parameterized symplectic symbol $\delta$ such that any point from a generic subset of $\tilde \pi^{-1}(U)\cap \mathbb PW_D$ is $\delta$-regular and this symbol $\delta$ is called
the Jacobi symbol of the structure $\mathcal U$ at $q$. For more general geometric structures, it is possible that  $\delta_1$-regular  and $\delta_2$-regular points belong to the same fiber $\mathcal H_c(q)$ for two different parameterized symplectic symbols $\delta_1$ and $\delta_2$.
In this case we can restrict ourselves to an open subset $\widehat U(\delta)$ of $\mathcal H_c$ where all points
are $\delta$-regular for some parameterized symplectic symbol $\delta$ and to proceed with the linearization procedure on this subset to get the canonical frame  for the original structure from the canonical frame for the resulting flag structure $(T\mathcal S, \{\mathcal Y^\gamma\}_{\gamma \in\mathcal S}\bigr)$ with parameterized flag symbol $\delta$. Here $\mathcal S$ is the space of normal extremals in $\widehat U$.

By analogy with Example 3 we say that a geometric structures $\mathcal U$ is said to be of \emph{maximal class}
if all curves $\mathcal Y_{-1}^\gamma$ do not lie in a proper subspace of $\Delta(\gamma)$ for generic normal extremal $\gamma$.
It was  proved in \cite{agrnorm} (although using a different but equivalent terminology) that any sub-Riemannian structure on a bracket-generating  distribution is of maximal class.

Further, let $\tau_m$ denote the degree $-1$ endomorphism of a $2m$-dimensional graded symplectic space as defined in Example 3, case a, after the formula \eqref{Lm}
(i.e. the endomorphism generating the symbol of rank 2 distribution of maximal class on $\mathbb R^{m+3}$).

From \cite{li2} it follows that for any sub-Riemannian structure on a bracket-generating  distribution with Jacobi symbol $\delta$  and, more generally, for any geometric structure $\mathcal U$ with the maximized Hamiltonian being well defined  and smooth in the set $\widehat U$ of $\delta$-regular points  and such that $\mathcal U$ is of maximal class the parameterized flag symbol $\delta$  is a direct sum of endomorphisms of type $\tau_m$.

More generally, fix two functions $N_+, N_-:\mathbb N\rightarrow \mathbb N\cup\{0\}$ with finite support and assume that the parameterized symplectic symbol $\delta$ is the direct sum of endomorphisms of type $\tau_m$ and $-\tau_m$, where $\tau_m$ appears $N_+(m)$ times and
$-\tau_{m}$ appears $N_-(m)$ times in this sum for each $m\in \mathbb N$. These symbols correspond to curves in a Lagrangian Grassmannian satisfying condition (G) in the terminology of the previous papers of the second author with C. Li (\cite{li1,li2}) and they may appear, for example, after symplectificatiom/linearization of sub-(pseudo)-Riemannian structures.
Then from the results of  \cite{li1,li2} or more general results of \cite[subsection 8.3.6]{flag2} it follows that the non-negative part $\mathfrak u_+^{F, \text{par}}(\delta)$ of $\mathfrak u^{F, \text{par}}(\delta)$ is equal to $\displaystyle{\bigoplus_{m\in\mathbb N} \mathfrak{so}(N_+(m), N_-(m))}$ and it is actually equal to the zero component $\mathfrak u_0^{F, \text{par}}(\delta)$ of $\mathfrak u^{F, \text{par}}(\delta)$.

Moreover, $\delta$ is a nice symbol so that applying Theorem \ref{quasidelta} to the corresponding flag structure we obtain a principal bundle $P(\delta)$ over the space of normal extremals $\mathcal S$  in $\widehat U(\delta)$ with the Lie algebra of the structure group isomorphic to  $\displaystyle{\bigoplus_{m\in\mathbb N} \mathfrak{so}(N_+(m), N_-(m))}\oplus \mathbb R\delta$. Moreover, this bundle $P$ induces the principle bundle $P_1(\delta)$ over $\widehat U(\delta)$ with the structure group ${\displaystyle{\prod_{m\in\mathbb N}} O\bigl(N_+(m), N_-(m)\bigr)}$ (note that the bundles  $P(\delta)$ and $P_1(\delta)$ coincide as sets). In particular, it gives the canonical (pseudo-) Riemannian metric on $\widehat U(\delta)$, which immediate implies that the first algebraic prolongation of the pair $\bigl(\mathfrak m, \mathfrak u^{F, \text{par}}(\delta)\bigr)$ is equal to $0$, as in a (pseudo-) Riemannian case (here $\mathfrak m$ is a commutative Lie algebra of the appropriate dimension).  In other words, \emph{the canonical frame of Theorem \ref{cor1} (or Theorem \ref{main2}) applied to the flag structure (or the tuple of distributions) associated with the geometric structure $\mathcal U $  can be constructed already on the bundle $P(\delta)$.}

 Note that, as already mentioned in \cite{li1, li2}, this type of constructions gives not only a canonical (pseudo-) Riemannian metric on $\widehat U(\delta)$ but a canonical splitting of each  tangent spaces to any point  of $\widehat U(\delta)$ such that each space of the splitting is endowed with the canonical (pseudo-) Euclidean structure.

Finally, note that not any parameterized symplectic symbol  is the direct sum of endomorphisms of type $\tau_{m_1}$ and $-\tau_{m_2}$ (or of type (D2) in the terminology of \cite[section 7.2]{flag2}), because there is another type of symplectically indecomposable degree $-1$ endomorphisms (type (D1) in the same paper), which can be used in this direct sum. Similarly to the Jacobi symbols of rank 3 distributions these symplectically indecomposable endomorphisms are sums of $2$ indecomposable endomorphisms with respect to the General Linear group. The algebras $\mathfrak u^{F, \text{par}}(\delta)$ and $\mathfrak u\bigl(\mathfrak m, \mathfrak u^{F, \text{par}}(\delta)\bigr)$ for arbitrary parameterized symplectic symbol will be described elsewhere.






\section{Proof of Theorem \ref{maintheor}: First prolongation of quasi-principle bundle}
\label{firstprolongsec}
\setcounter{equation}{0}
\setcounter{theorem}{0}
\setcounter{lemma}{0}
\setcounter{proposition}{0}
\setcounter{definition}{0}
\setcounter{cor}{0}
\setcounter{remark}{0}
\setcounter{example}{0}

Let $P^0$ be a quasi-principle bundle of type $(\mathfrak m, \mg^0)$.
Let $\Pi_0:P^0\to \mathcal S$ be the canonical projection. The filtration $\{\Delta^i\}_{i<0}$ of $T\mathcal S$ induces a filtration $\{\Delta^i_0\}_{i\leq 0}$ of $T P^0$ as follows:
\begin{equation}
\label{P0filt}
\begin{split}
~&\Delta^0_0=\ker (\Pi_0)_*,\\
~& \Delta^i_0(\vf)=\Bigl\{v\in T_\vf P^0: (\Pi_0)_*v\in \Delta^i\bigl(\Pi_0(\vf)\bigr)\Bigr\} ,\quad \forall i<0, \quad \vf\in P^0
\end{split}
\end{equation}
We also set $\Delta^i_0=0$ for all $i>0$.
Note that $\Delta^0_0(\vf)$ is the tangent space at $\vf$ to the corresponding fiber of $P^0$ and therefore can be identified with the subspace $L_{\vf}^0$ of $\mg^0(\mathfrak m)$ via the map $\Omega(\vf): T_\vf\bigl(P^0\bigl(\Pi_0(\vf)\bigr)\bigr)\mapsto L_\vf^0$, defined by \eqref{omega}.

Besides, all spaces $L_\vf^0$ can be canonically identified with one vector space. For this take a subspace $\mathcal M_0$ of the space $\mg^0(\mathfrak m)\subset \gl(\mg^{-1})$ such that the corresponding graded space $\gr \mathcal S_0$ is complementary to $\gr L_{\vf}^0$ in
$\gl (\gr \mg^{-1})$, i.e.
\begin{equation}
\label{splitM0}
\gl (\gr \mg^{-1})=\gr L_{\vf}^0\oplus \gr \mathcal M_0.
\end{equation}
Recall that by condition (2) of Definition \ref{quasidef} the space $\gr L_{\vf}^0$ does not depend on $\vf$, so the choice of $\mathcal M_0$ as above is indeed possible. Therefore
\begin{equation}
\label{splitM}
\gl (\mg^{-1})=L_{\vf}^0\oplus \mathcal M_0.
\end{equation}
 for any $\vf\in P^0$. Let
 \begin{equation}
\label{LO}
L^0:=\gl (\mg^{-1})/\mathcal M_0,
\end{equation}

 The splitting \eqref{splitM} defines the identification ${\rm Id}_\vf^0$ between the space $L_\vf^0$ and the factor-space
$\gl (\mg^{-1})/\mathcal M_0$,
${\rm Id}_\vf^0:L_\vf^0\mapsto L^0$. The space $L^0$ has the natural filtration
 induced by the filtration on $\gl (\mg^{-1})$ . The identification ${\rm Id}_\vf^0$ preserves the filtrations on the spaces $L_\vf^0$ and $L^0$. Note that by condition (3) of Definition \ref{quasidef} we have the following identifications:
 \begin{equation}
 \label{id3}
 \mg^0\cong \gr L_\vf^0\cong \gr L^0.
 \end{equation}
The space $\mathcal M_0$ is called the \emph{identifying space for the zero prolongation}.

Now
fix a point $\vf\in P^0$ and let  $\pi_0^i:\Delta_0^i(\vf)/ \Delta_0^{i+2}(\vf)\to \Delta_0^i(\vf)/ \Delta_0^{i+1}(\vf)$ be the canonical projection to the factor space.
Note that $\Pi_{0_*}$ induces an isomorphism between  the space $\Delta_0^i(\vf)/ \Delta_0^{i+1}(\vf)$ and the space $\Delta^i\bigr(\Pi_0(\vf)\bigl)/ \Delta^{i+1}\bigl(\Pi_0(\vf)\bigr)$ for any $i<0$. We denote this isomorphism by $\Pi_0^i$.
The fiber of the bundle $P^0$ over a point $\gamma\in \mathcal S$ is a subset of the set of all maps

\begin{equation*}\vf\in \bigoplus_{i<0} \text {Hom}\bigl(\mg^i,
\Delta^i(\gamma)/ \Delta^{i+1}(\gamma)\bigr),
\end{equation*}
 which are isomorphisms of the graded Lie algebras $\mathfrak{m}=\displaystyle{\bigoplus_{i<0} \mg^i}$ and
$\displaystyle{\bigoplus_{i<0} \Delta^i(\gamma)/\Delta^{i+1}(\gamma)}$.  Let $\widehat P^1$ be the bundle over $P^0$ with the fiber $\widehat P^1(\vf)$ over $\vf\in P^0$ consisting
of all maps
\begin{equation*}\hat\vf\in \bigoplus_{i< 0} \text{Hom}\bigl(\mg^i,
\Delta_0^i(\vf)/ \Delta_0^{i+2}(\vf)\bigr)\oplus \text{Hom}\bigl(L
^0,
\Delta_0^0(\vf)\bigr)
\end{equation*}
 such that
\begin{equation}
\label{hat}
\begin{split}
~&\vf|_{\mg^i}=\Pi_0^i\circ\pi_0^i\circ\hat\vf|_{\mg^i}, \quad \forall i<0,\\
~&\hat\vf|_{L_\vf^0}=\bigl(\Omega(\phi)|_{L_\vf^0})\bigr)^{-1}\circ \bigl({\rm Id}_\vf^0\bigr)^{-1}.
\end{split}
\end{equation}
The bundle $\widehat P^1$ is an affine bundle as shown below. Our goal in this section is to distinguish in a canonical way an affine subbundle of $\widehat P^1$ of minimal possible dimension.

For this fix again a point $\vf\in P^0$. For any $i<0$ choose a subspace $H^i\subset \Delta_0^i(\vf)/ \Delta_0^{i+2}(\vf)$, which is a complement of
$\Delta_0^{i+1}(\vf)/\Delta_0^{i+2}(\vf)$ to $\Delta_0^i(\vf)/ \Delta_0^{i+2}(\vf)$:
\begin{equation}
\label{H1}
\Delta_0^i(\vf)/ \Delta_0^{i+2}(\vf)=\Delta_0^{i+1}(\vf)/ \Delta_0^{i+2}(\vf)\oplus H^i.
\end{equation}
Then the map $\Pi^i_0\circ\pi^i_0|_{H^i}$ defines an isomorphism between $H^i$ and $\Delta^i\bigl(\Pi_0(\vf)\bigr)/ \Delta^{i+1}\bigl(\Pi_0(\vf)\bigr)$.  So, once a tuple of subspaces $\mathcal H=\{H^i\}_{i<0}$ is chosen,
one can define a map
\begin{equation*}
\vf^{\mathcal H}\in \displaystyle{\bigoplus_{i< 0} \text{Hom}\bigl(\mg^i,
\Delta_0^i(\vf)/ \Delta_0^{i+2}(\vf)\bigr)}\oplus  \text{Hom}\bigl(L
^0,
\Delta_0^0(\vf)\bigr)
\end{equation*}
 as follows
\begin{equation}
\label{vfH}
\begin{split}
~&\vf^{\mathcal H}|_{\mg^i}=
(\Pi^i_0\circ\pi_0^i|_{H^i})^{-1}\circ\vf|_{\mg^i} \text{ if } i<0\\
~&\vf^{\mathcal H}|_{L_\vf^0}=\bigl(\Omega(\phi)|_{L_\vf^0})\bigr)^{-1}\circ \bigl({\rm Id}_\vf^0\bigr)^{-1}
\end{split}
\end{equation}
Clearly $\hat\vf=\vf^{\mathcal H}$ satisfies  \eqref{hat}. Tuples of subspaces $\mathcal H=\{H^i\}_{i<0}$ satisfying \eqref{H1}
play here the same role as horizontal subspaces (an Ehresmann connection) in the prolongation of the usual $G$-structures (see, for example, \cite{stern} or \cite{zeltan}[section 2]. Can we choose a tuple
$\{H^i\}_{i<0}$ in a canonical way? For this, by analogy with the prolongation of $G$-structure, we introduce a \textquotedblleft partial soldering form\textquotedblright of the bundle $P^0$ and the structure function of a tuple $\mathcal H$. The \emph{soldering form} of $P^0$ is a tuple
$\Omega_0=\{\omega_0^i\}_{i<0}$, where $\omega^i_0$ is a $\mg^i$-valued linear form on $\Delta_0^i(\vf)$
defined by
\begin{equation}
\label{soldpart}
\omega_0^i(Y)=\vf^{-1}\Bigl(\Bigl((\Pi_0)_*(Y)\Bigr)_i\Bigr),
\end{equation}
where $\Bigl((\Pi_0)_*(Y)\Bigr)_i$ is the equivalence class of $(\Pi_0)_*(Y)$ in $\Delta^i(\gamma)/\Delta^{i+1}(\gamma)$.
Observe that $\Delta_0^{i+1}(\vf)=\ker \omega_0^i$. Thus the form $\omega_0^i$ induces the $\mg^i$-valued form $\bar \omega_0^i$ on $\Delta_0^i(\vf)
/\Delta_0^{i+1}(\vf)$.
The \emph{structure function $C_{\mathcal H}^0$ of the tuple $\mathcal H=\{H^i\}_{i<0}$} is the element of the space
\begin{equation}
\label{A0}
\mathcal A_0=
\left(\bigoplus_{i=-\mu}^{-2} {\rm Hom}(\mg^{-1}\otimes\mg^i,\mg^{i})\right)
\oplus {\rm Hom}(\mg^{-1}\wedge\mg^{-1},\mg^{-1})
\end{equation}
 defined as follows.
Let $\text{pr}_i^{\mathcal H}$ be the projection of $\Delta_0^i(\vf)/ \Delta_0^{i+2}(\vf)$ to $\Delta_0^{i+1}(\vf)/ \Delta_0^{i+2}(\vf)$
parallel to $H^i$ ( or corresponding to the splitting \eqref{H1}). Given vectors $v_1\in \mg^{-1}$ and $v_2\in\mg^{i}$, take two vector fields $Y_1$ and $Y_2$ in a neighborhood of $\lambda$ in $P^0$ such that $Y_1$ is a section of $\Delta_0^{-1}$, $Y_2$ is a section of $\Delta_0^i$, and
\begin{equation}
\label{Y1Y2}
\begin{split}
~&\omega_0^{-1}(Y_1)\equiv v_1,\quad \omega_0^i(Y_2)\equiv v_2,
\\
~& Y_1(\vf)=\vf^{\mathcal H}(v_1),\quad Y_2(\vf)\equiv \vf^{\mathcal H}(v_2)\,\,{\rm mod}\, \Delta_0^{i+2}(\vf).
\end{split}
\end{equation}
Then set
\begin{equation}
\label{structT1}
C_{\mathcal H}^0(v_1,v_2)\stackrel{\text{def}}{=}\bar\omega_0^i\Bigl({\rm pr}_{i-1}^{\mathcal H}\bigl([Y_1,Y_2](\vf)\bigr)\Bigr).
\end{equation}
In the above formula we take the equivalence class of the vector $[Y_1, Y_2](\vf)$ in $\Delta_0^{i-1}(\vf)/\Delta_0^{i+1}(\vf)$ and then apply ${\rm pr}_{i-1}^{\mathcal H}$.
It is easy to show (see \cite[section 3]{zeltan}) that $C_{\mathcal H}^0(v_1,v_2)$ does not depend on the choice of vector fields $Y_1$ and $Y_2$, satisfying \eqref{Y1Y2}.
%
%

We now take another tuple $\widetilde {\mathcal H}=\{\widetilde H^i\}_{i<0}$ such that
\begin{equation}
\label{tildeH1}
\Delta_0^i(\vf)/ \Delta_0^{i+2}(\vf)=\Delta_0^{i+1}(\vf)/ \Delta_0^{i+2}(\vf)\oplus \widetilde H^i
\end{equation}
and consider how the structure functions $C_{\mathcal H}^1$ and $C_{\widetilde{\mathcal H}}^1$ are related.
By construction, for any vector $v\in\mg ^i$ the vector $\vf^{\widetilde{\mathcal H}}(v)-\vf^{\mathcal H}(v)$ belongs to
$\Delta_0^{i+1}(\vf)/ \Delta_0^{i+2}(\vf)$.
Let
\begin{equation*}
f_{\mathcal H\widetilde {\mathcal H}}(v)\stackrel{\text{def}}{=} \begin{cases}
\bar\omega_0^{i+1}\left(\vf^{\widetilde{\mathcal H}}(v)-\vf^{\mathcal H}(v)\right) & \text{ if } v\in \mg^i \text{ with } i<-1\\
{\rm Id}_\vf^0\circ\Omega(\vf)\left(\vf^{\widetilde{\mathcal H}}(v)-\vf^{\mathcal H}(v)\right) & \text{ if } v\in \mg^{-1}.
\end{cases}
\end{equation*}
Then $f_{\mathcal H\widetilde {\mathcal H}}\in \displaystyle{\bigoplus_{i<-1}{\rm Hom}(\mg^i,\mg^{i+1})}\oplus
{\rm Hom}(\mg^{-1},L
^0)$.
Conversely, it is clear that for any $$f\in\displaystyle{\bigoplus_{i<-1}{\rm Hom}(\mg^i,\mg^{i+1})}\oplus{\rm Hom}(\mg^{-1},L
^0)$$ there exists a tuple $\widetilde{\mathcal H}=\{\widetilde H^i\}_{i<0}$, satisfying \eqref{tildeH1}, such that  $f=f_{\mathcal H \widetilde {\mathcal H}}$. In other words, the bundle $\widehat P^1$ is the affine bundle over $P^0$ such that each fiber is an affine space over the linear space $\displaystyle{\bigoplus_{i<-1}{\rm Hom}(\mg^i,\mg^{i+1})}\oplus
{\rm Hom}(\mg^{-1},L
^0)$.

Further, let $\mathcal A_0$ be as in \eqref{A0}. For any $\vf\in P^0$ define
a map
\begin{equation*}
\partial_0:
\displaystyle{\bigoplus_{i<-1}{\rm Hom}(\mg^i,\mg^{i+1})}\oplus{\rm Hom}(\mg^{-1},L_\vf^0)\rightarrow \mathcal A_0
\end{equation*}
  by
\begin{equation}
\label{spencer0}
\partial_0 f(v_1,v_2)=[f(v_1),v_2]+[v_1,f(v_2)]-f([v_1,v_2]),
\end{equation}
where the brackets $[\, \,, \,]$ are as in the Lie algebra $\mathfrak m\oplus\mg^0(\mathfrak m)$ (see \eqref{br1}).
The map $\partial_0$ coincides with the Spencer  (or antisymmetrization) operator  in the case of $G$-structures (see, for example, \cite{stern}).
Therefore it is called the \emph{generalized Spencer operator for the first prolongation (at the point  $\vf\in P^0$)}.
Under the identification ${\rm Id}_\vf^0$ between spaces $L^0_\vf$ and $L^0$ we look at the operator $\partial_0$ as acting
\begin{equation}
\label{from}
\text{ from }
\displaystyle{\bigoplus_{i<-1}{\rm Hom}(\mg^i,\mg^{i+1})}\oplus{\rm Hom}(\mg^{-1},L^0)\text{ to }\mathcal A_0.
\end{equation}

The following formula is a cornerstone of the prolongation procedure (for the proof see \cite[Proposition 3.1]{zeltan}):
\begin{equation}
\label{structrans}
C_{\widetilde {\mathcal H}}^0=C_{\mathcal H}^0+\partial_0f_{\mathcal H \widetilde{\mathcal H}}.
\end{equation}

Further the filtration \eqref{filtg} on the spaces $\mg^{-1}$ induces natural (nonincreasing by inclusion) filtrations $\{\mg^{-i}_j\}_{j=-i\nu}^0$ on each space $\mg^{-i}$ with $i>0$ as follows
\begin{equation}
\label{filtgi}
\mg^{-i}_j={\rm span}\{[v_1,[v_2,[\ldots[v_{i-1}, v_i],\ldots,]: v_k\in \mg^{-1}_{j_k}, -\nu\leq j_k\leq -1, \sum_{k=1}^i j_k\geq j\}
\end{equation}
For $i<0$ let $\gr \mg^{i}=\displaystyle{\bigoplus_{j=i\nu}^{-i}\mg^{i,j}}$ be the corresponding graded spaces, where $\mg^{i,j}=\mg^{i}_j/\mg^i_{j+1}$. Also, let
$$\gr \mathfrak m=\displaystyle{\bigoplus_{j=-\mu}^{-1}}\gr \mg^i=\displaystyle{\bigoplus_{i=-\mu}^{-1}\bigoplus_{j=-i\nu}^{-i}\mg^{i,j}}$$
Then $\gr \mathfrak m$ is a bi-graded vector space. Besides, the structure of a graded Lie algebra on $\mathfrak m$ induces the structure of a bi-graded Lie algebra on $\gr \mathfrak m$ with the Lie brackets $[\cdot,\cdot]_{\gr}$ defined as follows:
If  $v_1 \in \mg^{i_1,j_1}$, $v_2\in\mg^{i_2,j_2}$ , $\tilde v_1$ and $\tilde v_2$ are representative of the equivalence classes $v_1$ and $v_2$ in $\mg^{i_1}_{j_1}$ and $\mg^{i_2}_{j_2}$, respectively, then
\begin{equation}
\label{Liebracketsgr}
[v_1, v_2]_{gr}:=[\tilde v_1,\tilde v_2] \quad {\rm mod}\, \mg^{i_1+i_2}_{j_1+j_2+1},
\end{equation}
where $[\cdot,\cdot]$ are the Lie brackets on $\mathfrak m$. Then for arbitrary $v_1$ and $v_2$ from $\gr \mathfrak m$ the Lie brackets
$[v_1,v_2]_{\gr}$ are defined by bilinearity.

Moreover, the Lie algebras $\mathfrak m$ and $\gr \mathfrak m$ are isomorphic:
any linear isomorphism $I: \mg ^{-1}\rightarrow \gr \mg^{-1}$ can be extended to an isomorphism of Lie algebras $\mathfrak m$ and $\gr \mathfrak m$ by setting:
\begin{equation*}
I\bigl([v_1,[v_2,[,\ldots[v_{i-1}, v_i],\ldots,]\bigr)=[I(v_1),[I(v_2),[\ldots[I(v_{i-1}), I(v_i)]_{\gr},\ldots,]_{\gr}
\end{equation*}
As $I$ one can take $J^{-1}$, where $J:\gr \mg^{-1}\rightarrow \mg^{-1}$ is as in condition (2) of Definition \ref{compatdefin} (with $W=\mg^{-1}$ there). Any $X\in
\gr \mg^0(\mathfrak m)\subset \gl(\gr \mg^{-1})$ can be extended to a derivation of the Lie algebra $\gr \mathfrak m$ as follows: the operator $J\circ X\circ J^{-1}$ belongs to $\mg^0$ and, in particular, can be extended to a derivation of the Lie algebra $\mathfrak m$. Let us denote this extension by $Y$. Then to define the desired extension of $X$ we set $X v:=J^{-1}\circ Y\circ J v$ for any $v\in \gr\mathfrak m$. Besides, as in \eqref{br1}, one extends the structure of Lie algebra from $\gr \mathfrak m$ to
$\gr \mathfrak m\oplus \gr
\mg^0(\mathfrak m)$. Moreover, its Lie subalgebra $\gr \mathfrak m\oplus \gr
L_\vf^0$ is isomorphic to the Lie algebra $\mathfrak m\oplus \mg^0$ and the isomorphism is given by
\begin{equation}
\label{isomg0}
(v, X)\mapsto (Jv, J\circ X\circ J^{-1}), \quad v\in \gr \mathfrak m,\, X\in \gr L_\vf^0.
\end{equation}

 Further, if $A$ and $B$  are vector spaces endowed with nonincreasing by inclusion filtrations $\{A_j\}_{j\in \mathbb Z}$ and $\{B_j\}_{j\in Z}$, respectively, then by analogy with \eqref{grgldef} define the filtration $\left\{\bigl({\rm Hom}(A,B)\bigr)_k\right\}_{k\in\mathbb Z}$ on ${\rm Hom}(A,B)$ by
\begin{equation}
\label{Homk}
\bigl({\rm Hom}(A,B)\bigr)_k=\{X\in {\rm Hom}(A,B): X(A_j)\subset B_{j+k} \text{ for any } j\in \mathbb Z\}.
\end{equation}
With this notation we can define the filtration  on the domain space of the generalized Spencer operator $\partial_0$ as follows:
\begin{equation}
\label{domainfilt}
\left\{\displaystyle{\bigoplus_{i<-1}\bigl({\rm Hom}(\mg^i,\mg^{i+1})}\bigr)_k\oplus
\bigl({\rm Hom}(\mg^{-1},L_\vf^0)\bigr)_k\right\}_{k\in Z}.
\end{equation}

To define an appropriate filtration on the target space $\mathcal A_0$ of the operator $\partial_0$, first define the natural nonincreasing filtration of the spaces $\mg^{-1}\otimes \mg^{i}$ and $\mg^{-1}\wedge \mg^{-1}$ as follows:
\begin{eqnarray*}
&~&
\Bigl(\mg^{-1}\otimes \mg^{i}\Bigr)_j={\rm span}\{v_1\otimes v_2: v_1\in \mg^{-1}_{j_1}, v_2\in \mg^{i}_{j_2}, j_1+j_2>j\},\\
&~&
\Bigl(\mg^{-1}\wedge \mg^{i}\Bigr)_j={\rm span}\{v_1\wedge v_2: v_1\in \mg^{-1}_{j_1}, v_2\in \mg^{i}_{j_2}, j_1+j_2>j\}
\end{eqnarray*}

With this filtrations and the notation given by \eqref{Homk}, we can define the following filtration on the target space $\mathcal A_0$ of the operator $\partial_0$:
\begin{equation}
\label{targetfilt}
\left\{\Bigl(\bigoplus_{i=-\mu}^{-2} \bigl({\rm Hom}(\mg^{-1}\otimes\mg^i,\mg^{i})\bigr)_k\Bigr)
\oplus \bigl({\rm Hom}(\mg^{-1}\wedge\mg^{-1},\mg^{-1})\bigr)_k\right\}_{k\in \mathbb Z}
\end{equation}


Note that directly from \eqref{spencer0} it follows that $\partial_0$ preserves the filtrations \eqref{domainfilt} and \eqref{targetfilt} of the domain  and target spaces, i.e. it sends the $k$th space of filtration \eqref{domainfilt}
to the $k$th space of filtration \eqref{targetfilt} for any $k\in \mathbb Z$.

Now as before assume that $A$ and $B$ are two filtered vector spaces endowed with non-increasing by inclusion filtrations $\{A_k\}_{k\in\mathbb Z}$ and
$\{B_k\}_{k\in\mathbb Z}$, respectively. Let $\Upsilon:A\mapsto B$ be a linear map preserving the filtration, i.e. such that $\Upsilon(A_k)\subset B_k$ for any  $k\in\mathbb Z$. Then to $\Upsilon$  one can associate the linear map $\gr \Upsilon: \gr A\mapsto \gr B$ of the corresponding  graded spaces such that $\gr \Upsilon(a+A_{k+1})=\Upsilon(a)+B_{k+1}$.

Let us consider the map $\gr \partial_0$ associated with the generalized Spencer operator $\partial_0$.
Note that similarly to \eqref{idgr} we have the following natural identifications for the domain space and the target space of the map  $\gr \partial_0$ (which are the graded spaces corresponding to filtrations \eqref{domainfilt}  and \eqref{targetfilt} of the domain and the target space of the operator $\partial_0$, respectively):
\begin{eqnarray}
~&\label{idpart01}
\,\,\,\,\,\,\,\gr\Bigl(\displaystyle{\bigoplus_{i<-1}}{\rm Hom}(\mg^i,\mg^{i+1}\Bigr)\oplus{\rm Hom}(\mg^{-1},L_\vf^0)\cong
\displaystyle{\bigoplus_{i<-1}}{\rm Hom}(\gr\mg^i,\gr\mg^{i+1})\oplus{\rm Hom}(\gr\mg^{-1},\gr L^0_\vf),\\
~&\label{idpart02}\gr {\mathcal A}_0\cong \displaystyle{\bigoplus_{i<-1}} {\rm Hom}(\gr\mg^{-1}\otimes\gr\mg^i,\gr \mg^{i})
\oplus {\rm Hom}(\gr\mg^{-1}\wedge\gr\mg^{-1},\gr\mg^{-1})
\end{eqnarray}
In particular, from condition 2) of Definition \ref{quasidef} it follows that the domain space of $\gr \partial_0$ does not depend on a point $\vf\in P^0$. By the definition of Lie brackets
$[\cdot,\cdot]_{\gr}$ and under identifications \eqref{idpart01}-\eqref{idpart02} we have
\begin{equation}
\label{spencer0gr}
\gr\partial_0 f(v_1,v_2)=[f(v_1),v_2]_{\gr}+[v_1,f(v_2)]_{gr}-f([v_1,v_2]_{gr}),
\end{equation}
\begin{remark}
\label{idspencer}
{\rm Using the identification \eqref{isomg0} we can consider the operator $\gr \partial_0$ as the operator from $\displaystyle{\bigoplus_{i<-1}{\rm Hom}(\mg^i,\mg^{i+1})}$ to $\mathcal A_0$ satisfying the same formula as
in \eqref{spencer0}.}
\end{remark}

Now let us prove the following general lemma:
\begin{lemma}\label{lem_abc}
Let $\Upsilon\colon A\to B$ be a mapping of arbitrary filtered
vector spaces $A,B$ preserving the filtration. Let $\gr \Upsilon
\colon \gr A\to\gr B$ be the associated mapping of the corresponding
graded vector spaces. Then the following three statements hold:
\begin{enumerate}
\item $\gr (\ker \Upsilon)\subset \ker (\gr\Upsilon)$;
\item if $C$ is any subspace in $B$ such that
\begin{equation}
\label{groplus} \gr C \oplus \im \gr \Upsilon = \gr B,
\end{equation}
 then $C + \im
\Upsilon = B$;
\item under the assumptions of the previous items, the space $\gr \Upsilon^{-1}(C)$ does not depend on $C$ and coincides with
$\ker (\gr \Upsilon)$.
\end{enumerate}
\end{lemma}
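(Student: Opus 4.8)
The plan is to treat the lemma as a routine exercise in filtered linear algebra, arguing throughout with homogeneous representatives and a descending induction on the filtration degree. In every situation where the lemma is applied the spaces $A$ and $B$ are finite dimensional and the filtrations are exhaustive and separated, hence bounded: there are integers $k_-<k_+$ with $A_k=A$, $B_k=B$ for $k\le k_-$ and $A_k=B_k=0$ for $k\ge k_+$. I will use this only to guarantee that the inductions terminate; some such finiteness is genuinely needed, since item~(2) can fail for badly behaved infinite filtrations. I also recall that for a subspace $S\subseteq A$ with the induced filtration $S_k=S\cap A_k$ the natural map $\gr S\to\gr A$ sending $s+S_{k+1}$ to $s+A_{k+1}$ (for $s\in S\cap A_k$) is injective, so I will regard $\gr(\ker\Upsilon)$ and $\gr\bigl(\Upsilon^{-1}(C)\bigr)$ as subspaces of $\gr A$, and the degree-$k$ component of $\gr C$ as the image of $(C\cap B_k)/(C\cap B_{k+1})$ in $B_k/B_{k+1}$.

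For item (1) I would simply observe that the degree-$k$ component of $\gr(\ker\Upsilon)$ inside $\gr A$ consists of the classes $a+A_{k+1}$ with $a\in\ker\Upsilon\cap A_k$; applying $\gr\Upsilon$ to such a class gives $\Upsilon(a)+B_{k+1}=B_{k+1}=0$, so every such class lies in $\ker(\gr\Upsilon)$, which is exactly the assertion.

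For item (2) I would first prove the slightly stronger ``filtered'' statement
\begin{equation*}
B_j\subseteq C+\Upsilon(A_j)\qquad\text{for every }j\in\mathbb Z,
\end{equation*}
which contains item~(2) upon taking $j\le k_-$. Its proof is a descending induction on $j$: the base case $j\ge k_+$ is trivial, and in the inductive step one takes $b\in B_j$, uses the hypothesis $\gr B=\gr C\oplus\im\gr\Upsilon$ to write the class $b+B_{j+1}$ as $(c+B_{j+1})+\bigl(\Upsilon(a)+B_{j+1}\bigr)$ with $c\in C\cap B_j$ and $a\in A_j$, so that $b-c-\Upsilon(a)\in B_{j+1}$, and then invokes the inductive hypothesis to get $b-c-\Upsilon(a)=c'+\Upsilon(a')$ with $c'\in C$ and $a'\in A_{j+1}$; this yields $b=(c+c')+\Upsilon(a+a')$ with $a+a'\in A_j$.

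For item (3) I would establish the equality $\gr\bigl(\Upsilon^{-1}(C)\bigr)=\ker(\gr\Upsilon)$ inside $\gr A$ by proving both inclusions; since $\ker(\gr\Upsilon)$ does not mention $C$, the independence from $C$ then follows at once. The inclusion ``$\subseteq$'' does not use item~(2): if $a\in\Upsilon^{-1}(C)\cap A_k$ then $\Upsilon(a)\in C\cap B_k$, so $\gr\Upsilon(a+A_{k+1})=\Upsilon(a)+B_{k+1}$ lies both in the degree-$k$ component of $\gr C$ and in $\im\gr\Upsilon$, hence in $\gr C\cap\im\gr\Upsilon=0$, and therefore $a+A_{k+1}\in\ker(\gr\Upsilon)$. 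For ``$\supseteq$'' take a homogeneous $a+A_{k+1}\in\ker(\gr\Upsilon)$, i.e.\ $a\in A_k$ with $\Upsilon(a)\in B_{k+1}$; by the refined form of item~(2) write $\Upsilon(a)=c+\Upsilon(a'')$ with $c\in C$ and $a''\in A_{k+1}$, and set $a':=a-a''\in A_k$; then $a'\equiv a\pmod{A_{k+1}}$ and $\Upsilon(a')=c\in C$, so $a+A_{k+1}=a'+A_{k+1}$ lies in $\gr\bigl(\Upsilon^{-1}(C)\bigr)$. The one point that needs genuine care is precisely this last correction: a careless argument only adjusts $a$ modulo $\ker\Upsilon$, whereas the degree-tracking (``filtered'') version of item~(2) is exactly what permits the correction to be carried out within $A_{k+1}$, keeping the homogeneous class unchanged. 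I expect this to be the main --- if modest --- obstacle.
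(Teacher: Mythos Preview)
Your proof is correct and follows essentially the same approach as the paper: item~(1) is the same one-line observation; item~(2) is the same descending induction (you are just more explicit about the stronger filtered statement $B_j\subseteq C+\Upsilon(A_j)$, which the paper uses implicitly); and item~(3) is handled identically, with the key point being exactly the one you flag---that the correction $a''$ must lie in $A_{k+1}$, which both you and the paper obtain by applying item~(2) to the restriction of $\Upsilon$ to the appropriate filtration step. Your added remarks on boundedness of the filtration and on viewing $\gr S\hookrightarrow\gr A$ make the argument cleaner than the paper's somewhat terse version, but the substance is the same.
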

\begin{proof} {\bf 1)} Suppose that $a\in A_k$ and $\Upsilon(a) = 0$. Then
$\gr \Upsilon (a + A_{k+1}) = \Upsilon(a) + B_{k+1}=0$ and $a +
A_{k+1}\in \gr A$ lies in the kernel of $\gr\Upsilon$.

{\bf 2)} Let $b$ be any element in $B^{(k)}$. Then by assumption the
element $b + B_{k+1}\in \gr B$ uniquely decomposes as $(c +
C_{k+1}) + (\Upsilon(a+A_{k+1}))$ for some elements $c +
C_{k+1}\in\gr C$ and $a+A_{k+1}\in\gr A$. Hence, we see that $(b
- c - \Upsilon(a))$ lies in $B_{k+1}$. Proceeding by induction we
get that $b = c'+\Upsilon(a')$ for some elements $c'\in C$ and
$a'\in A$.

{\bf 3)} Let $a\in\Upsilon^{-1}(C)\cap A_{k}$. Then $\gr\Upsilon(a
+ A_{k+1})$ lies in $\gr C$ and, hence, is equal to $0$. Thus, we
have $\gr \Upsilon^{-1}(C)\subset \ker(\gr\Upsilon)$.

To prove the opposite inclusion $\ker(\gr\Upsilon)\subset\gr
\Upsilon^{-1}(C)$ we actually have to show that for any $a\in
A_{k}$, satisfying $\Upsilon(a)\in B_{k+1}$, there exists $a'\in
A_{k}$ such that  $a-a'\in A_{k+1}$ and $\Upsilon (a')\in C$.
For this let $\Upsilon_{k+1}$ be the restriction of $\Upsilon$ to
$A^{(k-1)}$. Then from \eqref{groplus} it follows that $gr
C^{(k-1)}\oplus \im \gr \Upsilon_k=B^{(k-1)}$. Hence, by the
previous item of the lemma we have $$C^{(k-1)}+\im
\Upsilon_{k-1}=B^{(k-1)}.$$ From this and the assumption that
$\Upsilon(a)\in B^{(k-1)}$ it follows that there exist $c_{k-1}\in
C^{(k-1)}$ and $a_{k-1}\in A^{(k-1)}$ such that
$\Upsilon(a)=c_{k-1}+\Upsilon(a_{k-1})$. Therefore, as required $a'$
one can take $a'=a-a_{k-1}$. Indeed, $a'-a=a_{k-1}\in A^{(k-1)}$ and
$\Upsilon(a')=\Upsilon(a-a_{k-1})=c_{k-1}\in C$. This completes the
proof of the third item of the lemma.
\end{proof}

Now fix a subspace
\begin{equation*}
\mathcal N_0\subset \mathcal A_0
 \end{equation*}
such that
\begin{equation}
\label{normsplitgr}
\gr A_0= \im \,\gr \partial_0\oplus\gr\mathcal N_0.
 \end{equation}
  By analogy with $G$-structures and with principle bundles of type $(\mathfrak m, \mg^0)$ the subspace $\mathcal N_0$ is called the \emph{normalization conditions for the first prolongation}. From item (2) of Lemma \ref{lem_abc} it follows that
  \begin{equation}
\label{normsplit}
\mathcal A_0= \im \,\partial_0+\mathcal N_0.
 \end{equation}

 Given $\vf\in P^0$ denote by $P^1(\vf)$ the following space:
  \begin{equation}
  \label{fiberP1}
 P^1(\vf)=\bigl\{\vf^{\mathcal H}: \mathcal H=\{H^i\}_{i<0} \text{ satisfies } \eqref{H1} \text{ and }C_{\mathcal H}^0\in \mathcal N_0\bigr\},
 \end{equation}
 where
 $\vf^{\mathcal H}$ is defined by
 \eqref{vfH}. Then from the formulas \eqref{structrans} and  \eqref{normsplit} it follows that $P^1(\vf)$ is not empty.
 Moreover, if $\vf^{\mathcal H}\in P^1(\vf)$ for some tuple of spaces $\mathcal H$, then $\vf^{\widetilde{\mathcal H}}\in P^1(\vf)$ for another tuple of spaces $\widetilde {\mathcal H}$ if and only if
 $$\partial_0 f_{\mathcal H, \widetilde{\mathcal H}}\in \mathcal N_0.$$
 Here $\partial_0$ is acting as in \eqref{from}.
 Therefore, $P^1(\vf)$ is an affine space over the linear space
\begin{equation}
\label{L1}
L^1_\vf:=(\partial_0)^{-1}(\mathcal N_0)\subset \displaystyle{\bigoplus_{i<-1}{\rm Hom}(\mg^i,\mg^{i+1})}\oplus{\rm Hom}(\mg^{-1},L^0).
\end{equation}

From item (3) of Lemma
\ref{lem_abc} it follows that the corresponding graded space $\gr L^1_\vf$ (with respect to filtration \eqref{domainfilt} of the domain space of $\partial_0$) does not depend on the normalization condition $\mathcal N_0$ and coincides with $\ker \gr \partial_0$. Taking into account Remark \ref{idspencer} we get that  under identification \eqref{isomg0} $\ker \gr \partial_0\cong\mg^1$, where $\mg^1$ is the first algebraic prolongation of the Lie algebra $\mathfrak m\oplus \mg^0$, as defined in \eqref{mgk}.
The bundle $P^1$ over $P^0$ with the fiber $P^1(\vf)$ over a point $\vf\in P^0$ is called
the \emph {first (geometric)  prolongation} of the bundle $P^0$.
\medskip

{\bf Conclusion} \emph{Given a subspace $\mathcal N_0\subset \mathcal A_0$ satisfying \eqref{normsplitgr} there exists a unique affine subbundle $P^1$ of the bundle $\widehat P^1$ with the fiber $P^1(\vf)$ over the point $\vf$ that satisfies \eqref{fiberP1}. A fiber $P^1(\vf)$ is an affine space over the linear space $L^1_\vf\subset \displaystyle{\bigoplus_{i<-1}{\rm Hom}(\mg^i,\mg^{i+1})}\oplus
{\rm Hom}(\mg^{-1},L^0)$. Moreover the corresponding graded space ${\gr L_\vf^1}$ (with respect to filtration \eqref{domainfilt}) is equal to the first algebraic prolongation $\mg^1$ of the algebra $\mathfrak m\oplus \mg^0$ under the identification \eqref{isomg0}}.

Finally all spaces $L_\vf^1$ can be canonically identified with one vector space. For this take a subspace $\mathcal M_1$ of the space 
$\displaystyle{\bigoplus_{i<-1}{\rm Hom}(\mg^i,\mg^{i+1})}\oplus{\rm Hom}(\mg^{-1},L^0)$
such that the corresponding graded space $\gr \mathcal M_1$ is complementary to $\gr L_{\vf}^1$ in
$\displaystyle{\bigoplus_{i<-1}{\rm Hom}(\gr\mg^i,\gr \mg^{i+1})}\oplus{\rm Hom}(\gr\mg^{-1},\gr L^0)$, i.e.
\begin{equation*}
{\bigoplus_{i<-1}{\rm Hom}(\gr\mg^i,\gr\mg^{i+1})}\oplus{\rm Hom}(\gr\mg^{-1},\gr L^0)=\gr L_{\vf}^1\oplus \gr \mathcal M_1.
\end{equation*}
By above, the space $\gr L_{\vf}^1$ is equal to $\mg^1$, i.e. does not depend on $\vf$, so the choice of $\mathcal M_1$ as above is indeed possible. Therefore
\begin{equation*}
{\bigoplus_{i<-1}{\rm Hom}(\mg^i,\mg^{i+1})}\oplus{\rm Hom}(\mg^{-1}, L^0)
=L_{\vf}^1\oplus \mathcal M_1.
\end{equation*}
 for any $\vf\in P^0$. This splitting defines the identification
 ${\rm Id}_\vf^1
 $
 between
 the factor-space
\begin{equation}
\label{L1}
L^1:=\left({\bigoplus_{i<-1}{\rm Hom}(\mg^i,\mg^{i+1})}\oplus{\rm Hom}(\mg^{-1},L^1)\right)\bigl/\mathcal M_1\bigr..
\end{equation}

and the spaces $L_\vf^1$ (which in turn are canonically identified with tangent space to the fiber $P^1(\vf)$ of $P^1$ over $\vf$).
The space $L^1$ has the natural filtration
 induced by the filtration on $\displaystyle{\bigoplus_{i<-1}{\rm Hom}(\mg^i,\mg^{i+1})}\oplus{\rm Hom}(\mg^{-1},L^1)$. The aforementioned
 identification isomorphism
 preserves the filtrations on the spaces $L^1$ and $L_\vf^1$.
 The space $\mathcal M_1$ is called the \emph{identifying space for the first prolongation}.

\section{Proof of theorem \ref{maintheor}: Higher order prolongation of quasi-principle bundles.}
\label{highprolongsec}
\setcounter{equation}{0}
\setcounter{theorem}{0}
\setcounter{lemma}{0}
\setcounter{proposition}{0}
\setcounter{definition}{0}
\setcounter{cor}{0}
\setcounter{remark}{0}
\setcounter{example}{0}

Now we are going to construct the higher order geometric prolongations of the bundle $P^0$ by induction.  Assume that all  $l$-th order prolongations $P^l$ are constructed for $0\leq l\leq k$. We also set $P^{-1}=\mathcal S$. We will not specify what the bundles $P^l$ are exactly. As in the case of the first prolongation $P^1$,  their construction depends on the choice of the identifying spaces and normalization conditions on each step.
But we will point out those properties of these bundles that we need in order to construct the $(k+1)$-st order prolongation $P^{k+1}$. First of all simultaneously with the bundles $P^l$ special filtered vector spaces $L^l$ are constructed recursively such that

\begin{enumerate}
\item [{\bf (A1)}] $L^0$ is as in \eqref{LO};
\item [{\bf (A2)}] $L^l$ is a factor-space of the space $\displaystyle{\bigoplus_{i<-l}{\rm Hom}(\mg^i,\mg^{i+l})}\oplus\displaystyle{\bigoplus_{i=-l}^{-1}{\rm Hom}(\mg^{i},L^{i+l})}$;
\item [{\bf (A3)}] The filtration on $L^l$ is induced by the natural filtration on $\displaystyle{\bigoplus_{i<-l}{\rm Hom}(\mg^i,\mg^{i+l})}\oplus\displaystyle{\bigoplus_{i=-l}^{-1}{\rm Hom}(\mg^{i},L^{i+l})}$, which is given, similarly to \eqref{domainfilt}, by
    \begin{equation}
    \label{domainfiltl}
\left\{\displaystyle{\bigoplus_{i<-l}}\bigl({\rm Hom}(\mg^i,\mg^{i+l})\bigr)_k\oplus
\displaystyle{\bigoplus_{i=-l}^{-1}}\bigl({\rm Hom}(\mg^{i},L^{i+l})\bigr)_k\right\}_{k\in Z},
\end{equation}
where ${\rm Hom}(A,B)_k$ is as in \eqref{Homk} and the filtration on $\mg^i$ is given by \eqref{filtgi};
\item [{\bf (A4)}] The corresponding graded space $\gr L^l$ is naturally identified with the $l$th algebraic prolongation $\mg^l$ of the Lie algebra $\mathfrak m+\mg^0$.
\end{enumerate}

Before describing the properties of bundles $P^l$ note that the filtration on $T\mathcal S$ induces naturally (by pullback) the filtration on each bundle $P^l$, $0\leq l\leq k$. Indeed, let $\Pi_l:P^l\rightarrow P^{l-1}$ be the canonical projection.
The tangent bundle $T P^l$ is endowed with the filtration $\{\Delta^i_l\}$ as follows: For $l=-1$ it coincides with the initial filtration  $\{\Delta^{i}\}_{i<0}$ and  for $l\geq 0$
we get by induction
\begin{equation}
\label{P0filt}
\begin{split}
~&\Delta^l_l=\ker (\Pi_l)_*\\
~& \Delta^i_l(\vf_l)=\Bigl\{v\in T_\lambda P^l: (\Pi_l)_*v\in \Delta_{l-1}^i\bigl(\Pi_l(\vf_l)\bigr)\Bigr\} ,\quad \forall i<l.
\end{split}
\end{equation}
We also set $\Delta^i_l=0$ for $i>l$.

Below are the main properties of bundles $P^l$, $0\leq l\leq k$:

\begin{enumerate}
\item [{\bf (B1)}] The fiber of $P^l$, $0\leq l\leq k$, over a point $\vf_{l-1}\in P^{l-1}$ will be a certain affine subspace of
the space of all maps belonging to the space
\begin{equation*}
\bigoplus_{i\leq -1} \text{Hom}\bigl(\mg^i,
\Delta_{l-1}^i(\vf_{l-1})/ \Delta_{l-1}^{i+l+1}(\vf_{l-1})\bigr)\oplus\bigoplus_{i=0}^{l-1} \text{Hom}\bigl(L^i,
\Delta_{l-1}^i(\vf_{l-1})
\bigr).
\end{equation*}
Moreover, for each $i$, $0\leq i<l$ the restrictions $\vf_l|_{L^i}$ are the same  for all points $\vf_l$ from the same fiber of $P^l$;

\item [{\bf (B2)}] For $0<l\leq k$ if $\vf_l\in P^l$ and $\vf_{l-1}=\Pi_l(\vf_l)$, then
    the points $\vf_{l-1}$ and $\vf_l$, considered as maps, are related as follows: if
    \begin{equation}
    \label{pli}
    \pi_l^i:\Delta_l^i(\vf_{l})/ \Delta_l^{i+l+2}(\vf_{l})\to \Delta_l^i(\vf_{l})/ \Delta_l^{i+l+1}(\vf_{l})
     \end{equation}
     denotes the canonical projection to a factor space and
     \begin{equation}
     \label{Pli}
     \Pi_l^i: \Delta^i_l(\vf_l)/\Delta^{i+l+1}_l(\vf_l)\rightarrow \Delta^i_{l-1}\bigl(\Pi_l(\vf_l)\bigr)/\Delta^{i+l+1}_{l-1}\bigl(\Pi_l(\vf_l)\bigr)
     \end{equation}
are the canonical maps induced by $(\Pi_l)_*$, then
\begin{equation}
\label{relvf}
\begin{split}
~&\text{for } i<0 \quad \vf_{l-1}|_{\mg^i}=\Pi_{l-1}^i\circ \pi_{l-1}^i\circ \vf_{l}|_{\mg^i},\\
~&\text{for } 0\leq i<l\quad  \vf_{l-1}|_{L^i}=\Pi_{l-1}^i\circ \vf_{l}|_{L^i}.
\end{split}
\end{equation}
Note that the maps $\Pi_l^i$ are isomorphisms for $i<0$ and the maps $\pi_l^i$ are identities for $i\geq 0$ (recall that $\Delta_l^i=0$ for $i>l$);

\item [{\bf (B3)}] 
For $l\geq 1$ the tangent spaces  (=$\Delta^l_l(\vf_l)$) to the fiber $P^l(\vf_{l-1})$, where $\vf_{l-1}=\Pi_l(\vf_l)$,  are canonically identified with certain subspaces $L^l_{\vf_{l-1}}$ of the space
    $\displaystyle{\bigoplus_{i<-l}{\rm Hom}(\mg^i,\mg^{i+l})}\oplus\displaystyle{\bigoplus_{i=-l}^{-1}{\rm Hom}(\mg^{i},L^{i+l})}$, which in turn are canonically identified with the space $L^l$. The obtained in this way canonical isomorphism between $L^l$ and $\Delta^l_l(\vf_l)$ will be denoted ${\rm Id}_{\vf_l}^l$. Finally, $\vf_l|_{L^{l-1}}$ coincides with  ${\rm Id}_{\vf_l}^l$.

 \end{enumerate}

Note also that  for $l\geq 1$, the bundle  $P^l$ is an affine bundle over $P^{l-1}$ with fibers being affine space over the vector space $L^l$. In particular,  the dimensions of the fibers are  equal to $\dim \mg^l$.

Now we are ready to construct the $(k+1)$-st order Tanaka geometric prolongation. Fix a point $\vf_k \in P^k$. Then
\begin{equation*}
\vf_k\in
\bigoplus_{i< -1} \text{Hom}\bigl(\mg^i,
\Delta_{k-1}^i(\vf_{k-1})/ \Delta_{k-1}^{i+k+1}(\vf_{k-1})\bigr)\oplus\bigoplus_{i=0}^{k-1} \text{Hom}\bigl(L^i,
\Delta_{k-1}^i(\vf_{k-1})
\bigr),
\end{equation*}
where $\vf_{k-1}=\Pi_k(\vf_k)$.
 Let $\mathcal H_k=\{H_k^i\}_{i<k}$ be the tuple of spaces such that $H_k^i=\vf_k(\mg^i)$ for $i<0$ and $H_k^i=\vf_k(L^i)$ for $0\leq i<k$. Take a tuple  $\mathcal H_{k+1}=\{H_{k+1}^i\}_{i<k}$ of linear spaces such that
\begin{enumerate}
\item for $i<0$ the space
$H_{k+1}^i$ is a complement
of $\Delta_k^{i+k+1}(\vf_k)/\Delta_k^{i+k+2}(\vf_k)$ in $(\Pi_k^i\circ\pi_k^i)^{-1} (H_k^i)\subset \Delta_{k}^i(\vf_{k})/ \Delta_{k}^{i+k+2}(\vf_{k})$,
\begin{equation}
\label{Hk-}
(\Pi_k^i\circ\pi_k^i)^{-1} (H_k^i)=\Delta_k^{i+k+1}(\vf_k)/\Delta_k^{i+k+2}(\vf_k)\oplus H_{k+1}^i;
\end{equation}

\item for $0\leq i<k$ the space $H_{k+1}^i$ is a complement of $\Delta_k^k(\vf_k)$ in  $(\Pi_k^i)^{-1} (H_k^i)$,
\begin{equation}
\label{Hk+}
(\Pi_k^i)^{-1} (H_k^i)=\Delta_k^{k}(\vf_k)\oplus H_{k+1}^i.
\end{equation}
\end{enumerate}
Here the maps $\pi_k^i$ and $\Pi_k^i$ are defined as in \eqref{pli} and \eqref{Pli} with $l=k$.

Since $\Delta_k^{i+k+1}(\vf_k)/\Delta_k^{i+k+2}(\vf_k)=\ker \pi_k^i$ and $\Pi_k^i$ is an isomorphism for $i<0$,  the map $\Pi_k^i\circ\pi^i_k|_{H_{k+1}^i}$ defines an isomorphism between $H_{k+1}^i$ and $H_k^i$ for $i<0$.
Additionally,  by \eqref{Hk+} the map $(\Pi_k)_*|_{H_{k+1}^i}$ defines an isomorphism between $H_{k+1}^i$ and $H_k^i$ for $0\leq i<k$.
 So, once a tuple of subspaces $\mathcal H_{k+1}=\{H_{k+1}^i\}_{i<k}$, satisfying \eqref{Hk-} and \eqref{Hk+}, is chosen,
one can define a map
\begin{equation*}
\vf^{\mathcal H_{k+1}}\in
\bigoplus_{i< -1} \text{Hom}\bigl(\mg^i,
\Delta_{k}^i(\vf_{k})/ \Delta_{k}^{i+k+2}(\vf_{k})\bigr)\oplus\bigoplus_{i=0}^{k} \text{Hom}\bigl(L^i,
\Delta_{k}^i(\vf_{k})
\bigr)
\end{equation*}
 as follows

\begin{equation}
\label{vfHk}
\begin{split}
&\vf^{\mathcal H_{k+1}}|_{\mg^i}=
(\Pi_k^i\circ\pi^i_k|_{H_{k+1}^i})^{-1}\circ\vf_k|_{\mg^i}, \quad \text { if } i<0,\\
&\vf^{\mathcal H_{k+1}}|_{L^i}=
\bigl((\Pi_k)_*|_{H_{k+1}^i}\bigr)^{-1}\circ\vf_k|_{\mg^i}, \quad \text{ if }0\leq i<k,\\
&\vf^{\mathcal H_{k+1}}|_{L^k}=
{\rm Id}_{\vf_k}^k. 
\end{split}
\end{equation}
Can we choose a tuple or a subset of tuples
 $\mathcal H_{k+1}$ in a canonical way? To answer this question, by analogy with section \ref{firstprolongsec}, we introduce a \textquotedblleft partial soldering form \textquotedblright of the bundle $P^k$ and the structure function of a tuple $\mathcal H_{k+1}$.
The \emph{soldering form} of $P^k$ is a tuple
$\Omega_k=\{\omega_k^i\}_{i<k}$, where $\omega_k^i$ is a $\mg^i$-valued linear form on $\Delta_k^i(\vf_k)$ for $i<0$ and $L^i$-valued linear form on $\Delta_k^i(\vf_k)$ for $0\leq i<k$
defined by
\begin{equation}
\label{soldpart}
\omega_k^i(Y)=\vf_k^{-1}\Bigl(\Bigl((\Pi_k)_*(Y)\Bigr)_i\Bigr)\Bigr).
\end{equation}
Here $\Bigl((\Pi_k)_*(Y)\Bigr)_i$ is the equivalence class of $(\Pi_k)_*(Y)$ in $\Delta_{k-1}^i(\vf_{k-1})/\Delta_{k-1}^{i+k+1}(\vf_{k-1})$.
By construction, it follows immediately that  $\Delta_k^{i+1}(\vf_k)=\ker \omega_k^i$. So, the form $\omega_k^i$ induces the $\mg^i$-valued form $\bar \omega_k^i$ on $\Delta_{k}^i(\vf_k)
/\Delta_k^{i+1}(\vf_k)$.

The \emph{ structure function $C_{\mathcal H_{k+1}}^k$ of a tuple $\mathcal H_{k+1}$} is the element of the space
\begin{equation}
\label{Ak}
\begin{split}
~&{\mathcal A}_k=\left(\bigoplus_{i=-\mu}^{-k-1} {\rm Hom}(\mg^{-1}\otimes\mg^i,\mg^{i+k})\right)\oplus
\left(\bigoplus_{i=-k}^{-2} {\rm Hom}(\mg^{-1}\otimes\mg^i,L^{i+k})\right)
\oplus \\
~&{\rm Hom}(\mg^{-1}\wedge\,\mg^{-1},L^{k-1})\oplus\left( \bigoplus_{i=0}^{k-1} {\rm Hom}(\mg^{-1}\otimes L^i,L^{k-1})\right)
\end{split}
\end{equation}
 defined as follows:
Let  $\pi_l^{i,s}: \Delta_l^i(\vf_l)/\Delta_l^{i+l+2}(\vf_l)\rightarrow \Delta_l^i(\vf_l)/ \Delta_l^{i+l+2-s}(\vf_l)$ be the canonical projection to a factor space, where $-1\leq l\leq k$, $i\leq l$.
Here, as before, we assume that $\Delta_l^i=0$ for $i>l$.
Note that the previously defined $\pi_l^i$ coincides with $\pi_l^{i,1}$.
By construction, one has the following two relations
\begin{eqnarray}
~&
\Delta_{k}^i(\vf_{k})/\Delta_{k}^{i+k+2}(\vf_{k})=\left(\bigoplus_{s=0}^k\pi_k^{i+s,s} (H_{k+1}^{i+s})\right)\oplus
\Delta_{k}^{i+k+1}(\vf_{k})/\Delta_{k}^{i+k+2}(\vf_{k})
\,\, \text{ if } i<0,
\label{longsplit1}
\\
~&\Delta_{k}^i(\vf_{k})=\left(\bigoplus_{s=i}^{k-1}
H_{k+1}^{i}\right)\oplus \Delta_{k}^{k}(\vf_{k})\quad \text{ if } 0\leq i<k.
\label{longsplit2}
\end{eqnarray}
Let $\text{pr}_i^{\mathcal H_{k+1}}$ be the projection of $\Delta_k^i(\vf_k)/ \Delta_k^{i+k+2}(\vf_k)$ to $\Delta_k^{i+k+1}(\vf_k)/ \Delta_k^{i+k+2}(\vf_k)$
corresponding to the splitting \eqref{longsplit1}
if $i<0$ or the projection of $\Delta_k^i(\vf_k)$ to
$H_{k+1}^{k-1}$
corresponding to the splitting \eqref{longsplit2} if $0\leq i<k$.
Given vectors $v_1\in \mg^{-1}$ and $v_2\in\mg^{i}$ take two vector fields $Y_1$ and $Y_2$ in a neighborhood $U_k$ of $\vf_k$ in $P^k$ such that for any $\tilde \vf_k
\in U_k$, where
$$\tilde \vf_k\in
\bigoplus_{i\leq -1} \text{Hom}\Bigl(\mg^i,
\Delta_{k-1}^i\bigl(\Pi_k(\tilde\vf_{k})\bigr)/ \Delta_{k-1}^{i+k+1}\bigl(\Pi_k(\tilde\vf_{k})\bigr)\Bigr)\oplus\bigoplus_{i=0}^{k-1} \text{Hom}\Bigl(L^i,
\Delta_{k-1}^i(\Pi_k(\tilde\vf_{k})\bigr)
\Bigr).$$
one has
\begin{equation}
\label{Y1Y2k}
\begin{split}
~&\Pi_{k_{*}}Y_1(\tilde \vf_k)= \tilde \vf_{k}(v_1), \quad \Pi_{k_{*}}Y_2(\tilde\vf_k)\equiv \tilde\vf_{k}(v_2) \quad {\rm mod}\,\, \Delta_{k-1}^{i+k+1}\bigl(\Pi_k(\tilde\vf_{k})\bigr),
\\
~& Y_1(\vf_k)=\vf^{\mathcal H_{k+1}}(v_1),\quad Y_2(\vf_k)\equiv\vf^{\mathcal H_{k+1}}(v_2)\,\,{\rm mod}\, \Delta_k^{i+k+2}(\vf_k).
\end{split}
\end{equation}
Then set
\begin{equation}
\label{structTk}
C_{\mathcal H^{k+1}}^k(v_1,v_2)\stackrel{\text{def}}{=}
\begin{cases}
\bar\omega_k^{i+k}\Bigl({\rm pr}_{i-1}^{\mathcal H_{k+1}}\bigl([Y_1,Y_2]\bigr)\Bigr)
& \text{ if } i<0,\\
\omega_k^{k-1}\Bigl({\rm pr}_{i-1}^{\mathcal H_{k+1}}\bigl([Y_1,Y_2]\bigr)\Bigr)& \text{ if } 0\leq i<k.
\end{cases}
\end{equation}
As in the case of the first prolongation, it is not hard to see that $C_{\mathcal H}^k(v_1,v_2)$ does not depend on the choice of vector fields $Y_1$ and $Y_2$, satisfying \eqref{Y1Y2k}.


Now take another tuple $\widetilde {\mathcal H}_{k+1}=\{\widetilde H_{k+1}^i\}_{i<k}$ such that
\begin{enumerate}
\item for $i<0$ the space
$\widetilde H_{k+1}^i$ is a complement
of $\Delta_k^{i+k+1}(\vf_k)/\Delta_k^{i+k+2}(\vf_k)$ in $(\Pi_k^i\circ\pi_k^i)^{-1} (H_k^i)\subset \Delta_{k}^i(\vf_{k})/ \Delta_{k}^{i+k+2}(\vf_{k})$,
\begin{equation}
\label{Hk-t}
(\Pi_k^i\circ\pi_k^i)^{-1} (H_k^i)=\Delta_k^{i+k+1}(\vf_k)/\Delta_k^{i+k+2}(\vf_k)\oplus \widetilde H_{k+1}^i;
\end{equation}

\item for $0\leq i<k$ the space $\widetilde H_{k+1}^i$ is a complement of $\Delta_k^k(\vf_k)$ in  $(\Pi_k^i)^{-1} (H_k^i)$,
\begin{equation}
\label{Hk+t}
(\Pi_k^i)^{-1} (H_k^i)=\Delta_k^{k}(\vf_k)\oplus \widetilde H_{k+1}^i.
\end{equation}
\end{enumerate}
%
How are the structure functions $C_{{\mathcal H}_{k+1}}^k$ and $C_{\widetilde{\mathcal H}_{k+1}}^k$ related?
By construction, for any vector $v\in\mg ^i$ the vector $\vf^{\widetilde{\mathcal H}_{k+1}}(v)-\vf^{\mathcal H_{k+1}}(v)$ belongs to
$\Delta_k^{i+k+1}(\vf_k)/ \Delta_k^{i+k+2}(\vf_k)$, for $i<0$, and to $\Delta_k^{k}(\vf_k)$,  for $0\leq i<k$.
Let
\begin{equation*}
f_{\mathcal H_{k+1}\widetilde {\mathcal H}_{k+1}}(v)\stackrel{\text{def}}{=} \begin{cases}
\bar\omega_k^{i+k+1}\left(\vf^{\widetilde{\mathcal H}_{k+1}}(v)-\vf^{\mathcal H_{k+1}}(v)\right) & \text{ if } v\in \mg^i \text{ with } i<-1\\
({\rm Id}_{\vf_k}^k)^{-1}\left(\vf^{\widetilde{\mathcal H}_{k+1}}(v)-\vf^{\mathcal H_{k+1}}(v)\right) & \text{ if } v\in \mg^{-1} \text{ or} v\in L^i \text{with } 0\leq i<k.
\end{cases}
\end{equation*}

Then
\begin{equation*}
f_{\mathcal H_{k+1}\widetilde {\mathcal H}_{k+1}}\in \displaystyle{\bigoplus_{i<-k-1}{\rm Hom}(\mg^i,\mg^{i+k+1})}\oplus \displaystyle{\bigoplus_{-k-1\leq i<0}{\rm Hom}(\mg^i,L^{i+k+1})}\oplus
\displaystyle{\bigoplus_{i=0}^{k-1}{\rm Hom}(L^i,L^k)}.
\end{equation*}
 In the opposite direction, it is clear that for any $$f\in
\displaystyle{\bigoplus_{i<-k-1}{\rm Hom}(\mg^i,\mg^{i+k+1})}\oplus \displaystyle{\bigoplus_{-k-1\leq i<0}{\rm Hom}(\mg^i,L^{i+k+1})}\oplus
\displaystyle{\bigoplus_{i=0}^{k-1}{\rm Hom}(L^i,L^k)},$$ there exists a tuple $\widetilde{\mathcal H}_{k+1}=\{\widetilde H_{k+1}^i\}_{i<k}$ satisfying \eqref{Hk-t} and \eqref{Hk+t} and such that  $f=f_{\mathcal H_{k+1} \widetilde {\mathcal H}_{k+1}}$. Further, let $\mathcal A_k$ be as in \eqref{Ak} and define
a map
\begin{equation*}
\partial_k:
\displaystyle{\bigoplus_{i<-k-1}{\rm Hom}(\mg^i,\mg^{i+k+1})}\oplus \displaystyle{\bigoplus_{-k-1\leq i<0}{\rm Hom}(\mg^i,L^{i+k+1})}\oplus
\displaystyle{\bigoplus_{i=0}^{k-1}{\rm Hom}(L^i,L^k)}\rightarrow\mathcal A_k
\end{equation*}
  by
\begin{equation}
\label{Spk}
\partial_k f(v_1,v_2)=\begin{cases}[f(v_1),v_2]+[v_1,f(v_2)]-f([v_1,v_2])&\text{ if } v_1\in \mg^
{-1}, v_2\in \mg^i, i\leq -k-1; \\
\bigl(f(v_1)\bigr)(v_2)- \bigl(f(v_2)\bigr)(v_1)-f([v_1,v_2])&\text{ if } v_1\in \mg^
{-1}, v_2\in \mg^i, -k-1<i< 0 \\
-\bigl(f(v_2)\bigr)(v_1) &\text{ if } v_1\in \mg^
{-1}, v_2\in L^i, 0\leq i
<k-1,
\end{cases}
\end{equation}
Here in the first and in the second line the brackets $[\, \,, \,]$ are as in the Lie algebra $\mathfrak m\oplus \mg^0(\mathfrak m)$ and in the second and the third line in the expressions $\bigl(f(v_1)\bigr)(v_2)$ and  $\bigl(f(v_2)\bigr)(v_1)$ we use the identification between the appropriate spaces $L_{\vf_{l-1}}^l$ and $L^l$  from the property (B3) of the bundles $P^l$. Under this identification, we look on $f(v_1)$ as on an element of the space
$\displaystyle{\bigoplus_{i<-k}{\rm Hom}(\mg^i,\mg^{i+k})}\oplus\displaystyle{\bigoplus_{i=-k}^{-1}{\rm Hom}(\mg^{i},L^{i+k})}$, which gives the appropriate meaning to $\bigl(f(v_1)\bigr)(v_2)$. Similarly, one gives the meaning to the expression $\bigl(f(v_2)\bigr)(v_1)$.

Note that the map $\partial_k$ in general depends on the point $\vf_k\in P^k$.
Also, for $k=0$ this definition coincides with the definition of the generalized Spencer operator for the first prolongation given in the previous section.

Similarly to the identity \eqref{structrans} the following identity holds:
\begin{equation}
\label{structransk}
C_{\widetilde {\mathcal H}_{k+1}}^k=C_{\mathcal H_{k+1}}^k+\partial_kf_{\mathcal H_{k+1} \widetilde{\mathcal H}_{k+1}}.
\end{equation}

A verification of this identity for pairs $(v_1,v_2)$, where $v_1\in \mg^{-1}$ and
$v_2\in \mg^i$ with $i<0$, is completely analogous to the proof of Proposition 3.1 in  \cite{zeltan}. For $i\geq 0$ one has to use the inductive assumption that the restrictions $\vf_l|_{\mg^i}$
are the same for all $\vf_l$ from the same fiber
(see property (B2) from the list of properties satisfied by $P^l$ in the beginning of this section) and the splitting \eqref{longsplit2}.

Recall that the domain and the target spaces of the map $\partial_k$ have natural filtrations induced by the filtrations on the spaces $\mg^i$, $i<0$, given by
\eqref{filtgi}. Moreover, the map $\partial_k$ preserves these filtrations.

What can we say about the associated map $\gr \partial_k$ of the corresponding graded spaces? Using the identifications similar to \eqref{idpart01} and \eqref{idpart02}, identifications \eqref{isomg0}, and the property (A4) of the spaces $L^l$ above, the domain and the target spaces of the map $\gr \partial_k$ can be identified with the spaces
\begin{equation}
\label{domaingrk}
\displaystyle{\bigoplus_{i<0}{\rm Hom}(\mg^i,\mg^{i+k+1})}\oplus
\displaystyle{\bigoplus_{i=0}^{k-1}{\rm Hom}(\mg^i,\mg^k)}
\end{equation}
and
\begin{equation}
\label{targetgrk}
\left(\bigoplus_{i=-\mu}^{-2} {\rm Hom}(\mg^{-1}\otimes\mg^i,\mg^{i+k})\right)\oplus {\rm Hom}(\mg^{-1}\wedge\,\mg^{-1},\mg^{k-1})\oplus\left( \bigoplus_{i=0}^{k-1} {\rm Hom}(\mg^{-1}\otimes\mg^i,\mg^{k-1})\right),
\end{equation}
respectively. Moreover, using these identifications and \eqref{Spk} one gets that the map $\gr \partial_k$ satisfies

\begin{equation}
\label{Spkgr}
\gr \partial_k f(v_1,v_2)=\begin{cases}[f(v_1),v_2]+[v_1,f(v_2)]-f([v_1,v_2])&\text{ if } v_1\in \mg^
{-1}, v_2\in \mg^i, i<0; \\
[v_1,f(v_2)] &\text{ if } v_1\in \mg^
{-1}, v_2\in \mg^i, 0\leq i
<k-1,
\end{cases}
\end{equation}
where the brackets $[\, \,, \,]$ are as
  in the algebraic universal  prolongation $\mathfrak u(\mathfrak m,\mg^0)$ of the pair $(\mathfrak m,\mg^0)$.

 \begin{remark}
 \label{kergrk}
{\rm  Note that
 \begin{equation}
 \label{kerk}
 f\in \ker\, \gr\partial_k \,\Rightarrow\, f|_{\mg^i}=0, \quad \forall \, 0\leq i\leq k-1.
 \end{equation}
 For the proof see section 3 of \cite{zeltan} (the map $\partial_k$ there coincides with the map $\gr \partial_k$ here).
 In other words,
 \begin{equation}
 \label{kerk1}
 \ker\, \gr \partial_k\subset \displaystyle{\bigoplus_{i<0}{\rm Hom}(\mg^i,\mg^{i+k+1})}.
 \end{equation}
 Moreover, directly from the definition, $\ker \gr \partial_0\cong\mg^{k+1}$, where $\mg^{k+1}$ is the $(k+1)$st algebraic prolongation of the Lie algebra $\mathfrak m\oplus \mg^0$, as defined in \eqref{mgk}.$\Box$}
 \end{remark}
Now fix a subspace
\begin{equation*}
\mathcal N_k\subset \mathcal A_k
 \end{equation*}
such that
\begin{equation}
\label{normsplitgr}
\gr A_k= \im \,\gr \partial_k\oplus\gr\mathcal N_k.
 \end{equation}
  By analogy with $G$-structures and with principle bundles of type $(\mathfrak m, \mg^0)$ the subspace $\mathcal N_k$ is called the \emph{normalization conditions for the $(k+1)$st prolongation}. From item (2) of Lemma \ref{lem_abc} it follows that
  \begin{equation}
\label{normsplitk}
\mathcal A_k= \im \,\partial_k+\mathcal N_k.
 \end{equation}

 Given $\vf_k\in P^k$ denote by $P^{k+1}(\vf_k)$ the following space:
  \begin{equation}
  \label{fiberPk}
 P^{k+1}(\vf_k)=\bigl\{\vf^{\mathcal H_{k+1}}: \mathcal H_{k+1}=\{H^i\}_{i<k} \text{ satisfies } \eqref{Hk-} \text{ and } \eqref{Hk+}  \text{ and }C_{\mathcal H_{k+1}}^k\in \mathcal N_k\bigr\},
 \end{equation}
 where
 $\vf^{\mathcal H_{k+1}}$ is defined by
 \eqref{vfHk}. Then from the formulas \eqref{structransk} and  \eqref{normsplitk} it follows that $P^{k+1}(\vf_k)$ is not empty.
 Moreover, if $\vf^{\mathcal H_{k+1}}\in P^{k+1}(\vf_k)$ for some tuple of spaces $\mathcal H_{k+1}$, then $\vf^{\widetilde{\mathcal H}_{k+1}}\in P^{k+1}(\vf_k)$ for another tuple of spaces $\widetilde {\mathcal H}_{k+1}$ if and only if
 $$\partial_k f_{\mathcal H_{k+1}, \widetilde{\mathcal H}_{k+1}}\in \mathcal N_k.$$
 Therefore, $P^{k+1}(\vf_k)$ is an affine space over the linear space
\begin{equation}
\label{Lk+1}
L^{k+1}_{\vf_k}:=(\partial_k)^{-1}(\mathcal N_k)\subset \displaystyle{\bigoplus_{i<-k-1}{\rm Hom}(\mg^i,\mg^{i+k+1})}\oplus \displaystyle{\bigoplus_{-k-1\leq i<0}{\rm Hom}(\mg^i,L^{i+k+1})}\oplus
\displaystyle{\bigoplus_{i=0}^{k-1}{\rm Hom}(L^i,L^k)}
.
\end{equation}

From item (3) of Lemma
\ref{lem_abc} it follows that the corresponding graded space
$\gr L^{k+1}_{\vf_k}$ (with respect to the filtration 
of the domain space of $\partial_k$) does not depend on the normalization condition $\mathcal N_k$ and coincides with $\ker \gr \partial_k$, which according to Remark \ref{kergrk} 
can be identified with $\mg^{k+1}$.
Besides, from \eqref{kerk} it follows that if $f\in L_{\vf_k}^{k+1}$, then $f|_{L^i}=0$ for all $0\leq i<k$. This implies that
$$L_{\vf_k}^{k+1}\subset \displaystyle{\bigoplus_{i<-k-1}{\rm Hom}(\mg^i,\mg^{i+k+1})}\oplus \displaystyle{\bigoplus_{-k-1\leq i<0}{\rm Hom}(\mg^i,L^{i+k+1})}.$$ Moreover, this implies that  for any $i$, $0\leq i<k$, the restrictions
$\vf_{k+1}|_{L^i}$ are the same for all points $\vf_{k+1}$ from the same fiber $P^{k+1}(\vf_k)$.
Note also that by \eqref{vfHk} the restriction $\vf_{k+1}|_{L^k}$ coincides with the identification ${\rm Id}_{\vf_k}^k$.
The bundle $P^{k+1}$ over $P^k$ with the fiber $P^{k+1}(\vf_k)$ over a point $\vf_k\in P^k$ is called
the \emph {$(k+1)$st (geometric)  prolongation} of the bundle $P^0$.
\medskip

Further, all spaces $L_\vf^{k+1}$ can be canonically identified with one vector space. For this take a subspace $\mathcal M_{k+1}$ of the space 
$ \displaystyle{\bigoplus_{i<-k-1}{\rm Hom}(\mg^i,\mg^{i+k+1})}\oplus \displaystyle{\bigoplus_{-k-1\leq i<0}{\rm Hom}(\mg^i,L^{i+k+1})}.$
such that the corresponding graded space $\gr \mathcal  M_1$ is complementary to $\gr L_{\vf}^1$ in
$$ \displaystyle{\bigoplus_{i<-k-1}{\rm Hom}(\gr \mg^i,\gr \mg^{i+k+1})}\oplus \displaystyle{\bigoplus_{-k-1\leq i<0}{\rm Hom}(\gr \mg^i,\gr L^{i+k+1})}.$$
\begin{equation*}
{\bigoplus_{i<-1}{\rm Hom}(\gr\mg^i,\gr\mg^{i+1})}\oplus{\rm Hom}(\gr\mg^{-1},\gr L^0)=\gr L_{\vf}^1\oplus \gr \mathcal M_{k+1}.
\end{equation*}
By above, the space $\gr L_{\vf}^{k+1}$ is equal to $\mg^{k+1}$, i.e. does not depend on $\vf_k$, so the choice of $\mathcal M_{k+1}$ as above is indeed possible. Therefore
\begin{equation*}
 \displaystyle{\bigoplus_{i<-k-1}{\rm Hom}(\mg^i,\mg^{i+k+1})}\oplus \displaystyle{\bigoplus_{-k-1\leq i<0}{\rm Hom}(\mg^i,L^{i+k+1})}
=L_{\vf_k}^{k+1}\oplus \mathcal M_{k+1}.
\end{equation*}
 for any $\vf_k\in P^{k}$. This splitting defines the identification
 ${\rm Id}_{\vf_k}^{k+1}$
 between the factor-space
\begin{equation}
\label{L1}
L^{k+1}:=\left(\displaystyle{\bigoplus_{i<-k-1}{\rm Hom}(\mg^i,\mg^{i+k+1})}\oplus \displaystyle{\bigoplus_{-k-1\leq i<0}{\rm Hom}(\mg^i,L^{i+k+1})}\right)\bigl/\mathcal M_1\bigr..
\end{equation}
and the space $L_\vf^{k+1}$.
The space $L^{k+1}$ has the natural filtration
 induced by the filtration on $ \displaystyle{\bigoplus_{i<-k-1}{\rm Hom}(\mg^i,\mg^{i+k+1})}\oplus \displaystyle{\bigoplus_{-k-1\leq i<0}{\rm Hom}(\mg^i,L^{i+k+1})}.$ This identification preserves the filtrations on the spaces  $L^{k+1}$ and $L_{\vf_k}^{k+1}$.
The space $\mathcal M_{k+1}$ is called the \emph{identifying space for the $(k+1)$-st prolongation}.

By our constructions the space $L^{k+1}$ is canonically identified with tangent space to the fiber $P^{k=1}(\vf_k)$  at any point $\vf_{k+1}$. Denote the identifying isomorphism by ${\rm Id}_{\vf_{k+1}}^{k+1}$.
In this way we finish the induction step by constructing the space $L^{k+1}$ and the bundle $P^{k+1}$ satisfying properties (A1)-(A4) and (B1)-(B3) above for $l=k+1$.

Finally, assume that there exists $\bar l\geq 0$ such that $\mg^{\bar l}\neq 0$ but $\mg^{\bar l+1}= 0$. Since the symbol $\mathfrak m$ is fundamental, it follows that $\mg ^l=0$ for all $l>\bar l$ . Hence, for all $l>\bar l$ the fiber of $P^l$ over a point $\lambda_{l-1}\in P^{l-1}$ is a single point belonging to
$$
\bigoplus_{i\leq -1} \text{Hom}\bigl(\mg^i,
\Delta_{l-1}^i(\vf_{l-1})/ \Delta_{l-1}^{i+l+1}(\vf_{l-1})\bigr)\oplus\bigoplus_{i=0}^{l-1} \text{Hom}\bigl(L^i,
\Delta_{l-1}^i(\vf_{l-1})
\bigr) .
$$
where, as before, $\mu$ is the degree of nonholonomy of the distribution $\Delta$.
Moreover, by our assumption, $\Delta_l^i=0$ if $l\geq\bar l$ and $i\geq \bar l$. Therefore, if $l=\bar l+\mu$, then $i+l+1>\bar l$ for $i\geq -\mu$ and the fiber of $P^l$ over $P^l$ is an element of $${\rm Hom} \left(\displaystyle{\bigoplus_{i=-\mu}^{-1} \mg^i\oplus\bigoplus_{i=0}^{l-1} L^i, T_{\lambda_{l-1}}P^{l-1}}\right).$$ In other words, $P^{\bar l+\mu}$ defines a canonical frame on $P^{\bar l+\mu-1}$.
But all bundles $P^l$ with $l\geq \bar l$ are identified one with each other by the canonical projections (which are diffeomorphisms in that case).
This completes the proof of Theorem \ref{maintheor}.
\end{document}